\newcolumntype{M}[1]{>{\centering\arraybackslash}m{#1}}
\newtheorem{theorem}{Theorem}
\newtheorem{lemma}{Lemma}
\newtheorem{remark}{Remark}
\newtheorem{assumption}{Assumption}
\newtheorem{definition}{Definition}
\newtheorem{proposition}{Proposition}
\def\BibTeX{{\rm B\kern-.05em{\sc i\kern-.025em b}\kern-.08em
    T\kern-.1667em\lower.7ex\hbox{E}\kern-.125emX}}
\begin{document}
\title{Enhancing Accuracy in Differentially Private Distributed Optimization Through Sensitivity Reduction}
\author{Furan Xie, Bing Liu and Li Chai
\thanks{This work was supported in part by the National Natural Science Foundation of China under Grant 62573332 and Grant U2441244, and in part by the Zhejiang Provincial Natural Science Foundation of China under Grant LZ24F030006. (\textit{Corresponding author: Bing Liu and Li Chai}).}
\thanks{Furan Xie and Bing Liu are with the Engineering Research Center of Metallurgical Automation and Measurement Technology, Wuhan University of Science and Technology, Wuhan 430081, China (e-mail: xiefuran328@qq.com; liubing17@wust.edu.cn).}
\thanks{Li Chai is with the State Key Laboratory of Industrial Control Technology, College of Control Science and Engineering, Zhejiang University, Hangzhou 310027, China (e-mail: chaili@zju.edu.cn).}}
\maketitle

\begin{abstract}
In this paper, we investigate the problem of differentially private distributed optimization. Recognizing that lower sensitivity leads to higher accuracy, we analyze the key factors influencing the sensitivity of differentially private distributed algorithms. Building on these insights, we propose a novel differentially private distributed algorithm for undirected graphs that enhances optimization accuracy by reducing sensitivity. To ensure practical applicability, we derive an explicit closed-form expression for the noise parameter as a function of the privacy budget. Moreover, we rigorously prove that the proposed algorithm can achieve arbitrarily rigorous $\epsilon$-differential privacy, establish its convergence in the mean square sense, and provide an upper bound on its optimization accuracy. Finally, extensive comparisons with various privacy-preserving methods validate the effectiveness of our algorithm.
\end{abstract}
\begin{IEEEkeywords}
Distributed optimization, privacy preservation, differential privacy, sensitivity, gradient tracking
\end{IEEEkeywords}
\section{Introduction}
\IEEEPARstart{W}{ith} the widespread application of distributed optimization in various fields such as machine learning, power systems, and robotic networks, the growing sophistication of cyberattacks has brought security and privacy concerns to the forefront of research \cite{wang2024privacy}. Compared to traditional centralized approaches, distributed optimization methods offer better privacy protection by avoiding the direct exchange of raw data, which may contain sensitive information. Nevertheless, studies have shown that the exchange of states and (estimated) gradients between agents in distributed optimization can still pose significant privacy risks. For instance, in distributed machine learning, attackers can accurately recover an agent's raw data through its gradient \cite{zhu2019deep}. In the distributed rendezvous problem, an agent's state itself is its sensitive information \cite{mo2016privacy}. Therefore, how to design distributed optimization methods that can protect the sensitive information of agents has become an urgent challenge.

In general, privacy-preserving distributed optimization algorithms can be categorized into three types: correlated randomness-based methods, encryption-based methods, and differential privacy-based methods. 

Correlated randomness-based methods achieve privacy protection by introducing temporally or spatially correlated randomness into the optimization process. \cite{gade2018private} and \cite{Han2022PrivacyPreserving} protect agents' private information by injecting structured random noise adhering to a zero-sum structure into the interactive messages between agents. \cite{Zhang2025Privacy} further strengthens privacy by simultaneously introducing functional and state perturbations that also satisfy a zero-sum constraint. \cite{li2020privacypreserving} safeguards agents' privacy by injecting random noise into the dual variables directed towards a non-convergent subspace. \cite{huan2023dynamics} protects the privacy of agents' gradients by adding randomness to the stepsizes and coupling weights during the initial iterations. \cite{cheng2024privacy} and \cite{Lu2025Sdppaad}, on the other hand, utilize state decomposition techniques to split the local state into public and private substates, transmitting only the substate unrelated to private information during communication, thereby achieving privacy preservation. Leveraging the robustness of the system dynamics, these methods can attain exact convergence. However, they require an additional topological assumption to ensure privacy protection and cannot defend against external eavesdroppers.

In contrast, encryption-based methods can effectively defend against external eavesdroppers by encrypting the transmitted information. \cite{zhang2019admm} and \cite{zhang2019enabling} propose two privacy-preserving distributed optimization algorithms by integrating the Alternating Direction Method of Multipliers (ADMM) and Distributed Gradient Descent (DGD) methods with Homomorphic Encryption (HE). Considering the significant computational and communication overhead associated with HE, several efficiency optimization techniques can be adopted to mitigate these burdens. \cite{Mia2025QuanCrypt} combines quantization techniques with HE, significantly reducing the size of data to be encrypted and transmitted by converting high-precision gradients into low-bit integers. \cite{han2024Adaptive} integrates packing (batching) techniques with HE, reducing communication frequency by aggregating multiple data elements into a single plaintext or ciphertext before encryption and transmission. Moreover, replacing HE with lightweight cryptographic primitives, such as the Advanced Encryption Standard (AES) in \cite{Liu2024CryptographyBasedPM} and functional encryption in \cite{Yu2025Lightweight}, can reduce the overall computational and communication burden. Despite these advances, compared to other types of privacy-preserving methods, the encryption and decryption operations in encryption-based approaches still introduce additional computational or communication overhead.

Due to the wide applicability, simplicity, and strong theoretical privacy guarantees, Differential Privacy (DP) has emerged as the de facto standard for privacy protection in recent years \cite{han2017Differentially}. Several studies have integrated DP into distributed optimization by injecting random noise into the information transmitted between agents. \cite{huang2015differentially} proposes a differentially private algorithm based on DGD, under the assumption that the gradients of all individual cost functions are bounded. However, since DGD relies on inexact gradients, its convergence rate remains sublinear, even without the addition of DP noise. In contrast, the Gradient Tracking (GT) algorithms can achieve linear convergence with a constant stepsize \cite{Qu2018Harnessing}. To this end, \cite{ding2022differentially} presents a differentially private GT algorithm that achieves linear convergence over undirected graphs. This result is later extended to time-varying directed graphs by \cite{Yang2025Differentially}. Both works, however, rely on the assumption that different objective functions in two adjacent problems share the same Lipschitz constant and convexity parameter. On the other hand, \cite{huang2015differentially}, \cite{ding2022differentially}, and \cite{Yang2025Differentially} all reveal the inherent trade-off between privacy and accuracy in differentially private algorithms.


To optimize the privacy-accuracy trade-off in differentially private algorithms, various techniques have been proposed. \cite{wang2024tailoring} presents a GT-based distributed algorithm that ensures both DP and optimality by incorporating decaying weakening factors into weight matrices. However, to achieve rigorous DP, this method requires the assumption that adjacent objective functions have consistent gradients within the neighborhood of the optimal solution. In addition, in the presence of persistent DP noise, GT techniques suffer from noise accumulation, which negatively impacts the algorithm's accuracy. To mitigate this issue, \cite{yu2023gradienttracking} proposes a differentially private distributed algorithm based on Robust Gradient Tracking (RGT) for undirected graphs. \cite{huang2024differential} later extends this approach to directed graphs. \cite{Wang2023GradientTrackingBased} proposes a distributed algorithm that integrates RGT with decaying weakening factors, ensuring almost sure convergence in the presence of information-sharing noise. As an enhancement, \cite{Zhao2025VarianceReduced} further incorporates a variance reduction technique into this framework, which reduces the estimation error variance and relaxes the assumptions on network connectivity. However, whether these two methods can provide rigorous DP guarantees remains unclear.

Apart from the aforementioned research focused on suppressing or reducing the impact of noise, another line of work aims to optimize the noise itself, with the core idea being to dynamically adjust the scale and distribution of noise based on the characteristics of the model or data. The adaptive noise mechanism is a typical representative of this direction. For instance, in the federated learning setting, \cite{Fu2022AdapDPFL} proposes a method combining adaptive gradient clipping and adaptive noise scaling, which better aligns the noise with the training phase and client heterogeneity. Furthermore, \cite{Xue2024DifferentiallyPrivate} introduces a component-wise adaptive noise mechanism that takes into account the heterogeneity in parameter magnitudes. By estimating the sensitivity of each parameter component and adding Gaussian noise scaled accordingly, this approach prevents excessive distortion to parameters with smaller magnitudes. On the other hand, from the perspective of system and communication architecture, introducing the shuffle model to achieve ``privacy amplification'' has emerged as another important approach to enhance privacy. This model incorporates an anonymization step: users first perturb their data locally, after which a trusted shuffler randomly permutes all user messages before sending them to the analyzer. This process severs the association between outputs and their senders, thereby providing stronger central DP guarantees for aggregated results compared to pure local perturbation \cite{girgis2021shuffled, liu2021flame, Liu2023Echo}. However, how to apply adaptive noise mechanisms and privacy amplification strategies to general differentially private decentralized optimization remains an open problem requiring in-depth research.



Despite significant advancements in differentially private distributed optimization, several critical challenges remain. Firstly, existing differentially private algorithms lack clear design principles to systematically improve accuracy, as well as theoretical frameworks to validate such improvements. As a result, efforts to enhance accuracy often rely on heuristic tuning rather than guided optimization, and it remains challenging to rigorously discern whether a modification truly improves performance or merely shifts the trade-off elsewhere. Secondly, while relationships between privacy and noise have been studied, there remains a lack of an explicit, closed-form expression that can directly translate a desired privacy budget into actionable noise parameters prior to execution. This makes it difficult to configure algorithms predictably and limits their practical deployment. Thirdly, the use of diminishing stepsizes introduces a fundamental analytical hurdle: the resulting state transition matrix of the algorithm no longer has a spectral radius strictly less than 1. This, combined with persistent noise injection, invalidates standard convergence analysis techniques, making it difficult to establish rigorous performance guarantees. To address these challenges, this paper proposes corresponding solutions, which, along with the contributions of this work, are summarized as follows:

1) \textit{A Systematic, Sensitivity-Reduced Design for Accuracy Enhancement.} To overcome the lack of design principles, we first reveal that algorithm sensitivity is the key determinant of accuracy under a fixed privacy budget. This novel perspective enables the systematic derivation of design principles for sensitivity reduction (Proposition \ref{prop:sensitivity}). Implementing these principles, we propose an implicit gradient tracking-based differentially private distributed algorithm for undirected graphs that inherently achieves lower sensitivity. Both theoretical analysis and experiments confirm that the proposed algorithm achieves higher accuracy under the same privacy budget compared to existing methods in \cite{ding2022differentially, Yang2025Differentially, yu2023gradienttracking, wang2024tailoring, huang2024differential}.

2) \textit{An Explicit, Direct Noise Calibration Rule and Halved Overhead.} To address the impracticality of implicit noise-privacy relationships, we derive an explicit, directly usable closed-form expression for the noise parameter as a function of the privacy budget. This expression serves as a direct design tool, enabling the exact computation of noise parameters from any given privacy budget before execution. Additionally, the proposed algorithm halves both the number of communication variables and noise injections compared to existing methods in \cite{ding2022differentially, Yang2025Differentially, yu2023gradienttracking, wang2024tailoring, huang2024differential}, significantly reducing communication and computational overhead.

3) \textit{Theoretical Analysis of $\epsilon$-DP and Convergence.} We prove that the proposed algorithm with appropriate parameters can achieve rigorous $\epsilon$-DP under a weak gradient dissimilarity condition (Theorem \ref{lemma_sensitivity}). Furthermore, to address the analytical difficulty arising from diminishing stepsizes and persistent noise, specifically, the resulting non-contractive dynamics, we develop an analysis framework to establish mean-square convergence and derive an upper bound on the optimization accuracy (Theorem \ref{theorem:convergence}). This bound explicitly quantifies the privacy-accuracy trade-off. Based on the theoretical results, we provide two practical parameter tuning strategies.



The remainder of this paper is organized as follows. Section \ref{sec:problem} presents the problem formulation and provides relevant definitions of DP. Section \ref{sec: algorithm} proposes a differentially private distributed algorithm with lower sensitivity. Sections \ref{section_DP} and \ref{section:convergence} analyze the DP guarantees and convergence of the proposed algorithm, respectively. Section \ref{section_simulation} validates the effectiveness of the proposed algorithm through numerical simulations. Finally, Section \ref{section_conclusion} concludes the paper.

\textit{Notation:} Table~\ref{tab:notation} lists the key notations used in this paper; others are defined in their respective sections.

\begin{table}[!ht]
\centering
\small 
\caption{Summary of Key Notations}
\label{tab:notation}
\begin{tabularx}{\linewidth}{lX} 
\toprule
\multicolumn{1}{l}{\textbf{Symbol}} & \multicolumn{1}{l}{\textbf{Meaning}} \\
\midrule
\multicolumn{2}{l}{\textbf{A. Key Problem and Algorithmic Parameters}} \\
\midrule
$n$ & Number of agents \\
$p$ & Dimension of the decision variable \\
$f_i$, $f$ & Local and global cost functions \\
$\bm{x}_i$, $\mathbf{x}$ & Local variable of agent $i$ and stack matrix  of all $\bm{x}_i$ \\
$\bm{x}^*$ & Optimal solution to the global problem \\
$L$, $\mu$ & Lipschitz smoothness and strong convexity constants \\
$\bm{y}_i$ & Auxiliary variable for agent $i$ (not communicated) \\
$\bm{z}_i$ & Noisy state of agent $i$ (shared with neighbors) \\
$\bm{\xi}_i$ & Laplace noise added by agent $i$ \\
$\alpha_k$ & Stepsize at iteration $k$ \\
$\beta$ & Gain parameter for the auxiliary variable \\
$\nu_k$ & Parameter of the Laplace noise at iteration $k$ \\
$\epsilon$ & Privacy budget \\
$\delta$ & Bound on gradient differences for adjacent problems \\
$\Delta(k)$ & Sensitivity of the algorithm at iteration $k$ \\
\midrule
\multicolumn{2}{l}{\textbf{B. General Mathematical Symbols}} \\
\midrule
$\|\bm{v}\|_a$ & Standard $L^a$-norm of vector $\bm{v}$ \\
$\|\bm{v}\|$ & $L^2$-norm of vector $\bm{v}$ \\
$\|M\|_{a}$ & Standard induced $L^a$-norm of matrix $M$ \\
$\|M\|$ & Frobenius norm of matrix $M$ \\
$\mathbf{0}$, $\mathbf{1}$ & All-zeros and all-ones vectors \\
$I$ & Identity matrix \\
$\rho(M)$ & Spectral radius of square matrix $M$ \\
$\mathbb{P}[\cdot]$, $\mathbb{E}[\cdot]$ & Probability and expectation operators \\
$Lap(c)$ & Laplace distribution with scale parameter $c$ \\
\bottomrule
\end{tabularx}
\color{black}
\end{table}

\section{Problem Formulation}\label{sec:problem}
Consider a multi-agent system consisting of $n$ agents. Each agent $i$ possesses an individual cost function $f_{i}: \mathbb{R}^{p} \to \mathbb{R}$, which is only known by agent $i$ itself. All agents perform local computation and communication over an undirected network to collectively minimize:
\begin{equation}\label{problem}
	f(\bm{x}) \triangleq \frac{1}{n}\sum_{i=1}^{n} f_{i}(\bm{x}),
\end{equation}
where $\bm{x}\in\mathbb{R}^{p}$ is the global decision variable and $f: \mathbb{R}^{p} \to \mathbb{R}$ is the average of all cost functions. This paper aims to protect the privacy of each agent's local cost function $f_i$, which constitutes sensitive information. We consider an adversary with arbitrary computational power that can observe all shared information among all agents. This adversary may be an external eavesdropper or any coalition of internal honest-but-curious agents, with knowledge of the network topology, the algorithm's parameters and rules, and the cost functions of all non-target agents. The privacy objective is to prevent this adversary from inferring the private cost function $f_{i}$ of any individual agent. Through DP, we ensure the adversary cannot reliably determine the identity or exact form of any agent's cost function from the algorithm's outputs. Let $\bm{x}_{i}$ be a local copy of the decision variable for agent $i$. Denote the gradient of $f_{i}$ with respect to $\bm{x}_{i}$ as $\nabla f_{i}(\bm{x}_{i})$. For convenience, we further define $\mathbf{x} \triangleq \left[ \bm{x}_{1},\cdots,\bm{x}_{n} \right]^{T} \in \mathbb{R}^{n\times p}$ and $\nabla F(\mathbf{x}) \triangleq \left[ \nabla f_{1}(\bm{x}_{1}),\cdots,\nabla f_{n}(\bm{x}_{n}) \right]^{T} \in \mathbb{R}^{n\times p}$.

We model an undirected communication network using the graph $\mathcal{G} \triangleq (\mathcal{N}, \mathcal{E})$, where $\mathcal{N}\triangleq \{1,2,\cdots,n\}$ represents the set of agents and $\mathcal{E}\subseteq\mathcal{N}\times\mathcal{N}$ denotes the set of edges. $(i,j)\in\mathcal{E}$ if and only if agents $i$ and $j$ can send information to each other. The set of agents that can communicate with agent $i$ is defined as $\mathcal{N}_{i} \triangleq \left\{j\in\mathcal{N}: (i,j)\in\mathcal{E}\right\}$. Moreover, we employ a nonnegative and symmetric matrix $W \in \mathbb{R}^{n\times n}$ to represent the inter-agent weights. For any $i,j\in\mathcal{N}$, $W_{ij}>0$ if $i=j$ or $(i,j)\in\mathcal{E}$. For all other $(i,j)$, $W_{ij}=0$. We define $\mathcal{G}_{W}\triangleq(\mathcal{N}, \mathcal{E}_{W})$ as the graph induced by the weight matrix $W$, where $(i,j)\in\mathcal{E}_{W}$ if and only if $i\not=j$ and $W_{ij}>0$.

Throughout the paper, we make the following assumptions:
\begin{assumption}\label{assump:smooth_convex}
For each $i\in\mathcal{N}$, the individual cost function $f_{i}$ is $L$-smooth and $\mu$-strongly convex, i.e., for any $\bm{x}, \bm{x}' \in \mathbb{R}^{p}$,
\begin{gather*}
\| \nabla f_{i}(\bm{x}) - \nabla f_{i}(\bm{x}') \| \le L \| \bm{x} - \bm{x}' \|,\\
f_{i}(\bm{x}') \ge f_{i}(\bm{x}) + \nabla f_{i}(\bm{x})^{T}(\bm{x}' - \bm{x}) + \frac{\mu}{2}\| \bm{x}' - \bm{x} \|^{2}.
\end{gather*}
\end{assumption}

\begin{assumption}\label{assump:W}
The nonnegative weight matrix $W$ is symmetric and doubly-stochastic, satisfying $W = W^{T}$, $\bm{1}^{T}W = \bm{1}^{T}$, and $W\bm{1} = \bm{1}$. Its induced graph $\mathcal{G}_{W}$ is connected, that is, there exists a path between any two agents.
\end{assumption}

\begin{remark}
(On the network topology) Assumption \ref{assump:W} requires the communication graph to be undirected and connected, and the weight matrix to be doubly-stochastic. While this is more restrictive than the directed-graph settings considered in some existing privacy-preserving algorithms (e.g., \cite{wang2024tailoring, huang2024differential, Yang2025Differentially}), it is adopted here to enable a clear investigation of the fundamental challenges in differentially private distributed optimization. Extending the design methodology and theoretical results of this work to directed graphs, which typically rely only on row- or column-stochastic weights, is an important and nontrivial direction for future research.
\end{remark}

Next, we introduce some notations. We describe the distributed optimization problem $\mathcal{P}$ in (\ref{problem}) using the tuple $(\mathcal{X}, \mathcal{F}, f, \mathcal{G}_{W})$, where $\mathcal{X} = \mathbb{R}^{p}$ is the domain of optimization. $\mathcal{F} \subseteq \{ \mathcal{X} \to \mathbb{R} \}$ is a set of real-valued cost functions. $f(\bm{x}) \triangleq \frac{1}{n}\sum_{i=1}^{n} f_{i}(\bm{x})$ with $f_{i} \in \mathcal{F}$ for each $i \in \mathcal{N}$. $\mathcal{G}_{W}$ is a communication graph induced by the weight matrix $W$. We represent a distributed optimization algorithm for solving the problem $\mathcal{P}$ by a mapping $Alg\left( \mathcal{P}, \mathbf{x}(0) \right): \{ \mathcal{P}, \mathbf{x}(0) \} \to \mathcal{Z}$, where $\mathbf{x}(0) \in \mathcal{X}^{n}$ is the initial state of all agents. $\mathcal{Z} = \{ \mathbf{z}(0), \mathbf{z}(1), \cdots \} \in \mathbb{O}$ is the sequence of shared information of all agents, also known as the observation sequence, and $\mathbb{O}$ denotes the set of all possible observation sequences. Note that since each $\mathbf{z}(k)$ takes values in the continuous space $\mathcal{X}^{n}$, the observation set $\mathbb{O}$ is continuous. Furthermore, under a distributed optimization algorithm $Alg$, for $\mathcal{P}$, $\mathbf{x}(0)$, and $\mathcal{Z}$, we denote the algorithm's internal states as $Sta(\mathcal{P}, \mathbf{x}(0), \mathcal{Z})$, which encompass all data and variables that are stored locally by each agent and not directly shared with other agents.

With the above notations, we are ready to present the definitions of \textit{adjacency} for two distributed optimization problems, and of \textit{$\epsilon$-DP} and \textit{sensitivity} for a distributed algorithm.
\begin{definition}\label{definition_adjacency}
(Adjacency \cite{huang2024differential}) Two distributed optimization problems $\mathcal{P}$ and $\mathcal{P}'$ are adjacent, if the following hold:
\begin{enumerate}
  \item $\mathcal{X} = \mathcal{X}'$, $\mathcal{F} = \mathcal{F}'$, and $\mathcal{G}_{W} = \mathcal{G}_{W}'$.
  \item There exists an $i_{0} \in \mathcal{N}$ such that $f_{i_{0}} \not= f_{i_{0}}'$ and for all $j \not= i_{0}$, $f_{j} = f_{j}'$.
  \item The distance between the gradients of $f_{i_{0}}$ and $f_{i_{0}}'$ is bounded by $\delta$ on $\mathcal{X}$, i.e., for any $\bm{x} \in \mathcal{X}$, $\| \nabla f_{i_{0}}(\bm{x}) - \nabla f_{i_{0}}'(\bm{x}) \|_{1} \le \delta$.
\end{enumerate}
\end{definition}

From Definition \ref{definition_adjacency}, two distributed optimization problems are adjacent if only one agent modifies its cost function while all other parameters remain unchanged. Furthermore, we require that the distance between the gradients of two different cost functions is bounded. To the best of our knowledge, this condition is the most relaxed in the existing literature, and a detailed discussion is provided in the following remark.
\begin{remark}\label{remark_cost_function}
Regarding the condition on different cost functions in adjacent problems, \cite{huang2015differentially} and \cite{yu2023gradienttracking} require that the gradients of all cost functions be bounded, i.e., for any $\bm{x}\in\mathcal{X}$ and $f_{i}\in\mathcal{F}$, $\|\nabla f_{i}(\bm{x})\|\le \delta_{1}$ with $\delta_{1}>0$. In contrast, our requirement in Definition \ref{definition_adjacency} is more relaxed, allowing for a broader class of cost functions. For example, the functions $f_{i_{0}}(\bm{x}) = \frac{1}{2}\bm{x}^{T}\bm{x}$ and $f_{i_{0}}'(\bm{x}) = \frac{1}{2}\bm{x}^{T}\bm{x} + \frac{\delta}{p}\bm{x}$ satisfy our requirement, but their gradients are unbounded on $\mathbb{R}^{p}$. Furthermore, \cite{ding2022differentially} and \cite{Yang2025Differentially} assume that $f_{i_{0}}$ and $f_{i_{0}}'$ share the same Lipschitz and convexity constants. \cite{wang2024tailoring} assumes that $f_{i_{0}}$ and $f_{i_{0}}'$ have identical gradients in the neighborhood of the solution of $\mathcal{P}$. These assumptions impose significantly stricter constraints compared to ours.
\end{remark}

\begin{definition}\label{definition_DP}
($\epsilon$-DP) Given an $\epsilon > 0$, a distributed optimization algorithm $Alg$ is $\epsilon$-differentially private if for any two adjacent distributed optimization problems $\mathcal{P}$ and $\mathcal{P}'$, any initial state $\mathbf{x}(0) \in \mathcal{X}^{n}$, and any set of the observation sequences $\mathcal{O} \subseteq \mathbb{O}$,
\begin{equation}
\mathbb{P}\left[ Alg\left( \mathcal{P}, \mathbf{x}(0) \right) \in \mathcal{O} \right] \le e^{\epsilon}\mathbb{P}\left[ Alg\left( \mathcal{P}', \mathbf{x}(0) \right) \in \mathcal{O} \right],
\end{equation}
where the probability $\mathbb{P}$ is taken over the randomness of the algorithm.
\end{definition}

By Definition \ref{definition_DP}, a distributed algorithm with $\epsilon$-DP ensures that the distribution of any agent's shared information remains nearly indistinguishable between adjacent problems. In other words, an adversary who has obtained the observation sequence cannot identify the individual cost function of any one agent with significant probability. In addition, a smaller $\epsilon$ indicates a higher level of privacy.

\begin{definition}\label{def:sensitivity}
(Sensitivity) At each iteration $k$, for any two adjacent distributed optimization problems $\mathcal{P}$ and $\mathcal{P}'$ and any initial state $\mathbf{x}(0) \in \mathcal{X}^{n}$, the sensitivity of the algorithm is
\begin{equation}
\Delta(k) \triangleq \sup_{\mathcal{Z} \in \mathbb{O}} \sup_{\substack{\mathbf{x}(k) \in Sta(\mathcal{P}, \mathbf{x}(0), \mathcal{Z}) \\ \mathbf{x}'(k) \in Sta(\mathcal{P}', \mathbf{x}(0), \mathcal{Z})}} \| \mathbf{x}(k) - \mathbf{x}'(k) \|_{1}.
\end{equation}
\end{definition}

The sensitivity $\Delta(k)$ characterizes the maximum deviation in the algorithm's states at iteration $k$ under adjacent problems. More importantly, the algorithm's sensitivity directly influences the noise variance required to achieve DP. According to Lemma 2 in \cite{huang2015differentially} (or see Lemma \ref{lemma_dp} in Section \ref{section_DP}), the sensitivity of the algorithm is positively correlated with the noise variance under a fixed privacy budget, i.e., lower sensitivity necessitates a smaller noise variance. Conversely, the algorithm's optimization accuracy is inversely proportional to the noise variance, i.e., the smaller the noise variance, the higher the algorithm's accuracy \cite{huang2024differential}. In summary, reducing the sensitivity of the algorithm can enhance its accuracy. This insight provides a clear guideline for designing differentially private algorithms with enhanced accuracy: minimizing the algorithm's sensitivity as much as possible.

\section{Algorithm Design}\label{sec: algorithm}
In this section, we propose a differentially private distributed algorithm with lower sensitivity. Additionally, we conduct a comparative analysis of our algorithm against related methods.

\subsection{Algorithm Design}
As discussed in the previous section, the key to designing a differentially private algorithm with enhanced accuracy lies in reducing the algorithm's sensitivity. Thus, identifying the main factors that affect the algorithm's sensitivity becomes crucial. Generally, distributed optimization algorithms consist of two main steps: a \textit{consensus} step and an \textit{optimization} step. In the consensus step, each agent combines its state with those of neighboring agents to minimize the consensus error related to other agents' states. In the optimization step, each agent updates its state along the gradient direction to reduce the optimization error associated with Problem (\ref{problem}). To achieve DP, each agent needs to inject random noise into its shared state, which we refer to as the noisy state. However, when designing a differentially private distributed algorithm, two issues arise:
	\begin{enumerate}
    \item In the consensus step, should agents use their true states or noisy states? 
    \item In the optimization step, should agents compute gradients at true states or noisy states? 
	\end{enumerate}
We argue that both the design choices mentioned above and the amount of shared information influence the algorithm's sensitivity. The key findings are summarized as follows:
\begin{proposition}\label{prop:sensitivity}
Consider two differentially private distributed algorithms, denoted as \textit{Alg 1} and \textit{Alg 2}, with sensitivities $\Delta_{1}(k)$ and $\Delta_{2}(k)$, respectively. Under the conditions given in Definition \ref{definition_adjacency}, the following claims hold:
\begin{enumerate}
\item[(i)] Consensus step design: If \textit{Alg 1} has each agent use its own noisy state in the consensus step, while \textit{Alg 2} has each agent use its own true state, then $\Delta_{1}(k) \le \Delta_{2}(k)$ for all $k\ge1$.
\item[(ii)] Gradient computation design: If \textit{Alg 1} computes gradients based on noisy states while \textit{Alg 2} computes gradients based on true states, then $\Delta_{1}(k) \le \Delta_{2}(k)$ for all $k\ge1$.
\item[(iii)] Number of exchanged variables: If \textit{Alg 1} exchanges only variable $\bm{x}$ while \textit{Alg 2} exchanges two variables $\bm{x}$ and $\bm{y}$, then $\Delta_{1}(k) \le \Delta_{2}(k)$ for all $k\ge1$.
\end{enumerate}
\end{proposition}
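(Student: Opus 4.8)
The plan is to argue directly from Definition~\ref{def:sensitivity} by tracking the \emph{state divergence} between the two trajectories generated under a pair of adjacent problems. Fix an arbitrary observation sequence $\mathcal{Z}=\{\mathbf{z}(0),\mathbf{z}(1),\dots\}\in\mathbb{O}$ and a common initial state $\mathbf{x}(0)$, and set $\bm{e}(k):=\mathbf{x}(k)-\mathbf{x}'(k)$ (and, in part (iii), also an auxiliary divergence $\bm{e}_{\bm{y}}(k):=\mathbf{y}(k)-\mathbf{y}'(k)$). Since $\mathbf{x}(0)=\mathbf{x}'(0)$ we have $\bm{e}(0)=\mathbf{0}$. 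In each of the three parts \textit{Alg~1} and \textit{Alg~2} share a common update template and differ only in the single design choice in question, so I would first write out the one-step recursion for $\bm{e}(k+1)$ for each algorithm. The decisive elementary observation is that, with $\mathcal{Z}$ held fixed, every term of an agent's update that is a function solely of the shared noisy quantities $\{\bm{z}_j(k)\}$ takes the same value under $\mathcal{P}$ and $\mathcal{P}'$ and hence cancels from $\bm{e}(k+1)$; the only surviving drivers of divergence are the gradient mismatch of agent $i_0$, whose $\ell_1$-magnitude is at most $\delta$ by Definition~\ref{definition_adjacency}, and whatever \emph{true}-state quantities the update still reads.

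Each part then reduces to comparing the structure of the two $\bm{e}$-recursions. In (i), \textit{Alg~1}'s consensus step evaluates to $[W\mathbf{z}(k)]_i$, which is purely a function of shared quantities and cancels, so $\bm{e}(k+1)$ is driven only by the gradient terms, whereas \textit{Alg~2} additionally reads agent $i$'s own true state, leaving an extra term with $i$-th row $W_{ii}\,\bm{e}_i(k)$ and nonnegative coefficients because $W\ge 0$ (Assumption~\ref{assump:W}). In (ii), evaluating gradients at the noisy state contributes $-\alpha_k\big(\nabla f_{i_0}(\bm{z}_{i_0}(k))-\nabla f_{i_0}'(\bm{z}_{i_0}(k))\big)$, which is supported on $i_0$ and bounded by $\alpha_k\delta$ regardless of the past, whereas evaluating at the true state contributes $-\alpha_k\big(\nabla f_{i}(\bm{x}_i(k))-\nabla f_{i}'(\bm{x}_i'(k))\big)$, which by $L$-smoothness (Assumption~\ref{assump:smooth_convex}) is bounded in $\ell_1$-norm by $\alpha_k L\|\bm{e}_i(k)\|$ for $i\ne i_0$ and $\alpha_k L\|\bm{e}_{i_0}(k)\|+\alpha_k\delta$ for $i=i_0$; thus \textit{Alg~2}'s recursion carries all of \textit{Alg~1}'s terms plus an extra nonnegative $\alpha_k L\|\bm{e}(k)\|$-type propagation. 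In (iii), \textit{Alg~1}'s divergence can be bounded by a single scalar recursion in $\|\bm{e}(k)\|_1$, while \textit{Alg~2} produces a coupled pair $(\bm{e}(k),\bm{e}_{\bm{y}}(k))$ in which $\bm{e}_{\bm{y}}$ accumulates the agent-$i_0$ gradient mismatch over iterations (an $O(\delta)$ contribution per step) and feeds back into $\bm{e}$ through a transition matrix with nonnegative entries, while $\bm{y}$ is itself part of \textit{Alg~2}'s state. Because $W\ge0$, the stepsizes and gains are nonnegative, and all mismatch terms are measured by norms, each $\|\bm{e}(k)\|_1$ admits a scalar majorant $\phi(k+1)=c_k\phi(k)+\alpha_k\delta$ with $\phi(0)=0$, whose coefficients $c_k$ for \textit{Alg~2} dominate those for \textit{Alg~1} by exactly the extra nonnegative terms identified above; a monotone induction on $k$ then yields $\Delta_1(k)\le\Delta_2(k)$ for all $k\ge1$ after passing to the supremum in Definition~\ref{def:sensitivity}.

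The step I expect to be the main obstacle is making this comparison rigorous despite the fact that it cannot be done realization-by-realization and term-by-term: the two algorithms' state trajectories diverge at different rates, so the quantities compared at a fixed iteration $k$ are not literally the same expression. The argument must instead be conducted at the level of the scalar majorants for $\|\bm{e}(k)\|_1$ while simultaneously passing to the supremum, which requires checking that these majorants are \emph{tight} --- e.g.\ by exhibiting an adjacent pair such as $f_{i_0}'(\bm{x})=f_{i_0}(\bm{x})+\tfrac{\delta}{p}\mathbf{1}^{T}\bm{x}$ together with an observation sequence and initial state that saturate the $\ell_1$-bound and (in (ii)) the $L$-smoothness inequality via the quadratic case --- so that enlarging a recursion's coefficients genuinely enlarges the supremized value. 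Part (iii) is the most delicate, since it additionally requires controlling the entrywise growth of the nonnegative transition matrix of the $(\bm{e},\bm{e}_{\bm{y}})$ system and verifying that the auxiliary channel can only increase the divergence of $\bm{x}$ and, a fortiori, the total sensitivity of \textit{Alg~2}.
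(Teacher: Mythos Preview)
Your approach is essentially the paper's: it too fixes the observation sequence, takes DP-DGD $\mathbf{x}(k)=W\mathbf{z}(k)-\alpha_k\nabla F(\mathbf{z}(k))$ as the concrete baseline \textit{Alg~1}, and for each part writes the divergence recursion for the modified \textit{Alg~2} (obtaining $\Delta_2(k)\le W_{i_0i_0}\Delta_2(k-1)+\delta\alpha_k$ for (i), $\Delta_2(k)\le\sqrt{n}L\alpha_k\Delta_2(k-1)+\delta\alpha_k$ for (ii), and for (iii) simply noting that sharing $\bm{y}$ adds the nonnegative term $\|\mathbf{y}(k)-\mathbf{y}'(k)\|_1$ to the sensitivity), then concludes from nonnegativity of the additional term.

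The tightness issue you flag is real and is in fact a refinement over the paper. The paper's argument compares only \emph{upper bounds}: from $\Delta_1(k)\le\delta\alpha_k$ and $\Delta_2(k)\le c_k\Delta_2(k-1)+\delta\alpha_k$ with $c_k\ge 0$ it simply asserts $\Delta_1(k)\le\Delta_2(k)$, which does not follow without showing either that the baseline bound is attained or that $\Delta_2(k)\ge\delta\alpha_k$. Your plan to exhibit a saturating adjacent pair (the linear perturbation $f_{i_0}'=f_{i_0}+\tfrac{\delta}{p}\mathbf{1}^T\bm{x}$) is exactly the missing ingredient; the paper omits it, so your proposal is, if anything, more rigorous on this point. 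Otherwise the proposition is treated informally in the paper as a design guideline, and your outline already matches that level of detail.
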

	
	\begin{proof}
The proof is provided in Appendix \ref{proof_proposition}.
	\end{proof}

Proposition \ref{prop:sensitivity} reveals that in differentially private distributed optimization, more extensive use of noisy information (i.e., using noisy states in both consensus and gradient computation) together with reducing the number of variables exchanged between agents can systematically reduce the algorithm's sensitivity. The intuition is twofold. Firstly, using noisy information acts like a ``blurring'' process on the exact information, which mitigates the influence of an agent's private data on its outputs, thereby reducing sensitivity. Secondly, reducing the shared information means an agent exposes less data externally, which naturally limits the amount of private information that could be disclosed and thus lowers sensitivity. Crucially, a lower sensitivity means that less noise is required to achieve the same privacy protection level. This directly translates into higher accuracy for the algorithm under an identical privacy budget.

\begin{algorithm}[!t]
\caption{Differentially Private Distributed Algorithm with Lower Sensitivity}\label{alg:alg_private}
\begin{algorithmic}[1]
\REQUIRE
Initialization $\bm{x}_{i}(0)\in\mathbb{R}^{p}$ and $\bm{y}_{i}(0)=\bm{0}$, the weight matrix $W$, the stepsize sequence $\{\alpha_{k}\}_{k=1}^{\infty}$, $\beta > 0$.

\ENSURE
\STATE\label{step_1}
Each agent $i$ injects noise $\bm{\xi}_{i}(k)$ into $\bm{x}_{i}(k-1)$.
\STATE\label{step_2}
Each agent $i$ sends $\bm{z}_{i}(k) = \bm{x}_{i}(k-1) + \bm{\xi}_{i}(k)$ to $j\in\mathcal{N}_{i}$ and receives $\bm{z}_{j}(k)$ from $j\in\mathcal{N}_{i}$.
\STATE\label{step_3}
Each agent $i$ executes the following updates:
	\begin{subequations}\label{algorithm}
	\begin{align}
	&\bar{\bm{z}}_{i}(k) = \sum_{j = 1}^{n} W_{ij}\bm{z}_{j}(k), \label{algorithm_1}\\
	&\bm{y}_{i}(k) = \bm{y}_{i}(k-1) + \beta\left(\bm{z}_{i}(k) - \bar{\bm{z}}_{i}(k)\right), \label{algorithm_2}\\
	&\bm{x}_{i}(k) = \bar{\bm{z}}_{i}(k) - \alpha_{k}\left( \bm{y}_{i}(k) + \nabla f_{i}\left(\bm{z}_{i}(k)\right) \right). \label{algorithm_3}
	\end{align}
	\end{subequations}
\end{algorithmic}
\end{algorithm}

Building on the design principles in Proposition \ref{prop:sensitivity}, we propose a differentially private distributed algorithm with lower sensitivity, as detailed in Algorithm \ref{alg:alg_private}. In this algorithm, $\bm{y}_{i}$ serves as an auxiliary variable that compensates for the error between the local gradient and the average gradient, thereby accelerating convergence. Its principle will be discussed in the next subsection. To achieve $\epsilon$-DP, Algorithm \ref{alg:alg_private} injects additive noise $\bm{\xi}_{i}$ into the shared information of agent $i$ at every iteration $k$. Notably, in Algorithm \ref{alg:alg_private}, each agent $i$ utilizes its noisy state $\bm{z}_{i}$ in the consensus step (\ref{algorithm_1}), computes the gradient $\nabla f_{i}\left(\cdot\right)$ at its noisy state $\bm{z}_{i}$ in the optimization step (\ref{algorithm_3}), and shares only the variable $\bm{z}_{i}$ to its neighboring agents, without disclosing the auxiliary variable $\bm{y}_{i}$. These designs adhere to Proposition \ref{prop:sensitivity} and ensure that Algorithm \ref{alg:alg_private} achieves lower sensitivity.

Let $\bm{\xi}(k) \triangleq [\bm{\xi}_{1}(k), \bm{\xi}_{2}(k), \cdots, \bm{\xi}_{n}(k)]^{T} \in \mathbb{R}^{n\times p}$. The injected random noise satisfies the following assumption:
	\begin{assumption}\label{assumption_noise}
	 The random noise $\bm{\xi}(k)$ is independent across dimensions $p$, agents $i$, and iterations $k$. Moreover, it follows a zero-mean Laplace distribution with parameter $\nu_{k}$, i.e., $\bm{\xi}(k) \sim Lap(\nu_{k})$.
	\end{assumption}
	
For the stepsize $\alpha_{k}$ and the noise parameter $\nu_{k}$, we choose them as exponentially decaying sequences:
	\begin{equation}\label{eq_alpha_nu}
	\alpha_{k} = \gamma q_{1}^{k-1},\quad \nu_{k} = \frac{\gamma\delta q_{2}}{\epsilon(q_{2} - q_{1})}q_{2}^{k-1},
	\end{equation}
where $\gamma$ is the initial stepsize parameter with $\gamma\beta\le1$, $q_{1}\in(0,1)$ is the stepsize decay parameter, $\delta$ is the bound on the gradient distance in Definition \ref{definition_adjacency}, $q_{2}\in (q_{1},1)$ is the noise decay parameter, and $\epsilon > 0$ is the privacy budget.

\begin{table}[!t]
  \centering
  \caption{Comparison of Algorithm \ref{alg:alg_private} with Existing Algorithms.} 
   \begin{tabular}{cccc}
    \toprule
    \multirow{2}{*}{Algorithm} & Explicit, practical & Number of & Communica- \\
    & expression for $\nu_{k}(\epsilon)$ & added noise & tion \\
    \midrule
    \cite{ding2022differentially} & \ding{55} & $2p$ & $2p$ \\
    \cite{yu2023gradienttracking} & \ding{55} & $2p$ & $2p$ \\ 
    \cite{wang2024tailoring} & \ding{55} & $2p$ & $2p$ \\
    \cite{huang2024differential} & \ding{55} & $2p$ & $2p$ \\
    \cite{Yang2025Differentially} &  \ding{55} & $2p$ & $2p$ \\
    Our method & \Checkmark & $p$ & $p$ \\
    \bottomrule
  \end{tabular}
  \label{table_algorithm_communication}
\end{table}

Algorithm \ref{alg:alg_private} involves four tunable parameters $\gamma$, $\beta$, $q_{1}$, and $q_{2}$. For these parameters, we provide two tuning strategies in Section \ref{section:convergence}. Using parameters (\ref{eq_alpha_nu}), Algorithm \ref{alg:alg_private} achieves  $\epsilon$-DP, convergence, and a guaranteed level of accuracy. These properties will be analyzed in detail in Sections \ref{section_DP} and \ref{section:convergence}. Moreover, we summarize the advantages of Algorithm \ref{alg:alg_private} over existing state-of-the-art algorithms \cite{ding2022differentially, Yang2025Differentially, yu2023gradienttracking, wang2024tailoring, huang2024differential} in Table \ref{table_algorithm_communication}, with detailed discussions provided in the following remarks.

\begin{remark}
For Algorithm \ref{alg:alg_private}, given any privacy budget $\epsilon > 0$, the noise parameter $\nu_{k}$ can be directly and explicitly computed offline using the closed-form expression in Eq. (\ref{eq_alpha_nu}). This provides a straightforward design rule, contrasting with the implicit or complex analytical relations in \cite{ding2022differentially, Yang2025Differentially, yu2023gradienttracking, wang2024tailoring, huang2024differential}, which primarily serve for posterior analysis or as verification constraints rather than direct synthesis tools.
\end{remark}

\begin{remark}
In the proposed algorithm, only variable $\bm{x}$ is shared among agents, while the auxiliary variable $\bm{y}$ is kept local. Compared to existing GT-based algorithms \cite{ding2022differentially, yu2023gradienttracking, wang2024tailoring, huang2024differential, Yang2025Differentially}, which require exchanging two variables, the proposed algorithm reduces both the number of communication variables and the amount of injected noise by half, significantly lowering communication and computational overhead. Note that the DPOP method \cite{huang2015differentially} also exchanges only one variable. However, DPOP is based on DGD and relies on a decaying stepsize for convergence, which leads to a sublinear convergence rate even without privacy noise. In contrast, our algorithm employs an implicit gradient-tracking mechanism (see the next subsection for a detailed comparison among DGD, GT, and our scheme), which enables more effective gradient alignment. Under the same privacy budget, our algorithm achieves higher optimization accuracy (see Fig. \ref{fig_residual_dpop} in Section \ref{section_simulation}), demonstrating a superior privacy-accuracy trade-off inherited from the GT framework.
\end{remark}

\subsection{Related Methods}\label{sebsec_related_methods}
To get a better insight into the update rule of our algorithm, we compare it with the DGD and GT methods. Notably, \cite{huang2015differentially} and \cite{wang2024tailoring} develop differentially private distributed algorithms based on the DGD method. The update rule for DGD is
	\begin{equation}\label{eq:DGD}
	\mathbf{x}(k+1) = W\mathbf{x}(k) - \alpha \nabla F(\mathbf{x}(k)).
	\end{equation}
The DGD method with a constant stepsize cannot converge to the optimal solution. To illustrate it, assume that $\mathbf{x}(k)$ converges to a consistent limit $\mathbf{x}(\infty)$. Since $W\bm{1}=\bm{1}$, it follows that $\mathbf{x}(\infty)=W\mathbf{x}(\infty)$. Taking the limit as $k \to \infty$ on both sides of (\ref{eq:DGD}), we obtain $\nabla F(\mathbf{x}(\infty)) = \bm{0}$. This means that all local cost functions $f_{i}$ attain their minimum at the same $\bm{x}_{i}(\infty)$, which is generally impossible. Thus, the DGD method's stepsize must be decaying to ensure exact convergence. However, the decaying stepsize results in slow convergence both in theory and practice \cite{Jakovetic2014fast}.

The GT method overcomes the above limitation by introducing an auxiliary variable $\mathbf{y}$, which enables exact convergence with a constant stepsize. Note that the differentially private distributed algorithms in \cite{ding2022differentially, yu2023gradienttracking, wang2024tailoring, huang2024differential, Yang2025Differentially} are based on the GT method or its variants. The update rule for GT is given by:
	\begin{subequations}
	\begin{align}
	\mathbf{x}(k+1) &= W\mathbf{x}(k) - \alpha\mathbf{y}(k), \label{eq:GT_x} \\
	\mathbf{y}(k+1) &= W\mathbf{y}(k) + \nabla F(\mathbf{x}(k+1)) - \nabla F(\mathbf{x}(k)), \label{eq:GT_y}
	\end{align}
	\end{subequations}
where $\mathbf{y}(0) = \nabla F(\mathbf{x}(0))$. To demonstrate how GT achieves exact convergence, we first substitute (\ref{eq:GT_x}) into (\ref{eq:GT_y}) to derive the formula of $\mathbf{x}(k+2) - \mathbf{x}(k+1)$. Next, combining the first update $\mathbf{x}(1) = W\mathbf{x}(0) - \alpha\nabla F( \mathbf{x}(0))$ with $\mathbf{x}(2) - \mathbf{x}(1), \cdots, \mathbf{x}(k+1) - \mathbf{x}(k)$ yields:
	\begin{equation}\label{eq:GT_var}
	\begin{aligned}
	\mathbf{x}(k+1) =&\ W\mathbf{x}(k) - \alpha \nabla F(\mathbf{x}(k)) + \sum_{l=1}^{k}\left(W-I\right)\mathbf{x}(l) \\
	& + \sum_{l=0}^{k-1}(W-W^{2})\mathbf{x}(l).
	\end{aligned}
	\end{equation}
In comparison with the DGD method, the GT method introduces two additional terms. Taking the limit as $k \to \infty$ on both sides of (\ref{eq:GT_var}), we obtain $\alpha\nabla F(\mathbf{x}(\infty)) = \sum_{l=1}^{\infty}\left(W-I\right)\mathbf{x}(l) + \sum_{l=0}^{\infty}(W- W^{2})\mathbf{x}(l)$. Since $\bm{1}^{T}(W-I) = 0$ and $\bm{1}^{T}(W-W^{2}) = 0$, we obtain $\bm{1}^{T}\nabla F(\mathbf{x}(\infty)) = 0$, which corresponds to the objective of finding a point that minimizes the average of $\sum_{i=1}^{n} f_{i}$.

For Algorithm \ref{alg:alg_private}, to better illustrate its fundamental mechanism, we omit the noise term and replace the decaying stepsize $\alpha_{k}$ with a fixed stepsize $\alpha$, yielding the following update rule:
	\begin{subequations}
	\begin{align}
	\mathbf{y}(k+1) &= \mathbf{y}(k) + \beta(I-W)\mathbf{x}(k), \label{alg1:no_noise_y} \\
	\mathbf{x}(k+1) &= W\mathbf{x}(k) - \alpha\left(\mathbf{y}(k+1)+\nabla F(\mathbf{x}(k))\right), \label{alg1:no_noise_x}
	\end{align}
	\end{subequations}
where $\mathbf{y}(0) = \bm{0}$. From (\ref{alg1:no_noise_y}), we can deduce that $\mathbf{y}(k+1) = \sum_{l=0}^{k}\beta(I-W)\mathbf{x}(l)$. Substituting this into (\ref{alg1:no_noise_x}) gives:
	\begin{equation}\label{eq:alg1_var}
	\mathbf{x}(k+1) = W\mathbf{x}(k) - \alpha \nabla F(\mathbf{x}(k)) + \alpha\beta\sum_{l=0}^{k}\left(W-I\right)\mathbf{x}(l).
	\end{equation}
In comparison with the GT method (\ref{eq:GT_var}), when $\alpha\beta=1$, our algorithm introduces the term $(W-I)\mathbf{x}(0)$ but eliminates the term $\sum_{l=0}^{k-1}(W-W^{2})\mathbf{x}(l)$. Taking the limit as $k \to \infty$ on both sides of (\ref{eq:alg1_var}), we obtain $\nabla F(\mathbf{x}(\infty)) = \beta\sum_{l=0}^{\infty}(W-I)\mathbf{x}(l)$. Since $\mathbf{1}^{T}(W-I) = 
0$, it follows that $\mathbf{1}^{T}\nabla F(\mathbf{x}(\infty)) = 0$. Roughly speaking, our algorithm, similar to the GT method, can achieve exact convergence with a constant stepsize. More importantly, in our algorithm, neighboring agents only share the variable $\bm{x}$, whereas in the GT method, both $\bm{x}$ and the auxiliary variable $\bm{y}$ need to be exchanged. Thus, our algorithm reduces the communication burden by half compared to the GT method. On the other hand, in the GT method, the auxiliary variable $\mathbf{y}$, satisfying $\frac{1}{n}\mathbf{1}^{T}\mathbf{y}(k) = \frac{1}{n}\mathbf{1}^{T}\nabla F(\mathbf{x}(k))$ for any $ k \ge 0 $, is used to track the average gradient. In our algorithm, the auxiliary variable $\mathbf{y}$ satisfies $\frac{1}{n}\mathbf{1}^{T}(\mathbf{y}(k)+\nabla F(\mathbf{x}(k))) = \frac{1}{n}\mathbf{1}^{T}\nabla F(\mathbf{x}(k))$ for any $k\ge0$, which can be interpreted as tracking the difference between the average gradient and the local gradient.

\section{Differential Privacy}\label{section_DP}
\begin{table}[!t]
  \centering
  \caption{Comparison of Sensitivity of Algorithm \ref{alg:alg_private} with Existing Algorithms.}
  \begin{tabular}{c>{\raggedright\arraybackslash}p{6.5cm}}
    \toprule
    \textbf{Algorithm} & \textbf{Sensitivity} \\
    \midrule
    \cite{ding2022differentially} & Not lower than $\bm{(2c_{1} + 2)(\delta + c_{2})}$ with $c_{1}, c_{2} > 0$. \\ 
    \addlinespace[0.5ex] \cline{2-2} \addlinespace[0.5ex]
    \cite{yu2023gradienttracking} & Not lower than $\bm{(2c_{3} + 1)(\delta + c_{4})\alpha_{1}(k)}$ with $c_{3}, c_{4} > 0$ and $\sum_{k=1}^{\infty}\alpha_{1}(k) < \infty$. \\ 
    \addlinespace[0.5ex] \cline{2-2} \addlinespace[0.5ex]
    \cite{wang2024tailoring} & Not lower than $\bm{(2 - \alpha_{2}(k))(\delta + c_{5})(\alpha_3(k) + 1)}$ with $\sum_{k=1}^{\infty} \alpha_{2}(k) = \infty$, $\lim_{k\to\infty} \alpha_{2}(k) = 0$, $\sum_{k=1}^{\infty} \alpha_{3}(k) = \infty$, $\lim_{k\to\infty} \alpha_{3}(k) = 0$, and $c_{5} > 0$. \\ 
    \addlinespace[0.5ex] \cline{2-2} \addlinespace[0.5ex]
    \cite{huang2024differential} & Not lower than $\bm{3(\delta + c_{6})\alpha_{4}(k)}$ with $c_{6} > 0$ and $\sum_{k=1}^{\infty}\alpha_{4}(k) < \infty$. \\ 
    \addlinespace[0.5ex] \cline{2-2} \addlinespace[0.5ex]
    \cite{Yang2025Differentially} & Not lower than $\bm{(2c_{7} + 2)(\delta + c_{8})}$ with $c_{7}, c_{8} > 0$. \\ 
    \addlinespace[0.5ex] \cline{2-2} \addlinespace[0.5ex]
    Our method & No more than $\bm{\delta\alpha_{k}}$ with $ \sum_{k=1}^{\infty} \alpha_{k} < \infty$. \\
    \bottomrule
  \end{tabular}
  \label{tab: sensitivity}
\end{table}

In this section, we show that Algorithm \ref{alg:alg_private} with the parameter settings (\ref{eq_alpha_nu}) guarantees $\epsilon$-DP. We then validate the advantages of Algorithm \ref{alg:alg_private} in terms of accuracy.


The following lemma provides a sufficient condition for Algorithm \ref{alg:alg_private} to guarantee $\epsilon$-DP.

\begin{lemma}\label{lemma_dp}
(\cite[Lemma 2]{huang2015differentially}) If each agent $i$ adds a noise vector $\bm{\xi}_{i}(k)$ under Assumption \ref{assumption_noise} such that $\sum_{k=1}^{\infty} \frac{\Delta(k)}{\nu_{k}} \le \epsilon$, then Algorithm \ref{alg:alg_private} is $\epsilon$-differentially private.
\end{lemma}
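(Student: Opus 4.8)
The plan is to prove Lemma~\ref{lemma_dp} by the standard \emph{adaptive composition} argument for a sequence of Laplace mechanisms. Since each coordinate of every $\mathbf{z}(k)$ lives in a continuous space, I would first reduce the infinite-horizon claim in Definition~\ref{definition_DP} to finite horizons: it suffices to show that for every $K\ge 1$ the joint probability density of $\big(\mathbf{z}(1),\dots,\mathbf{z}(K)\big)$ produced by $Alg(\mathcal{P},\mathbf{x}(0))$ is, pointwise, at most $e^{\epsilon}$ times that produced by $Alg(\mathcal{P}',\mathbf{x}(0))$. Integrating this pointwise bound over the projection of an arbitrary measurable $\mathcal{O}\subseteq\mathbb{O}$ onto its first $K$ coordinates, and then letting $K\to\infty$ (cylinder sets generate the product $\sigma$-algebra on $\mathbb{O}$, so a routine extension argument applies), yields the $\epsilon$-DP inequality.

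The core step is a \emph{causal factorization} of this finite-horizon density. I would show by induction on $k$ that, given $\mathbf{x}(0)$, $W$, $\beta$, the stepsizes, and the cost functions of $\mathcal{P}$, the internal state $\mathbf{x}(k-1)$ is a \emph{deterministic} function of the observation prefix $\mathbf{z}(1),\dots,\mathbf{z}(k-1)$: running (\ref{algorithm_1})--(\ref{algorithm_3}) forward, $\bar{\bm{z}}_i(\ell)$ and $\bm{y}_i(\ell)$ depend only on $\mathbf{z}(1),\dots,\mathbf{z}(\ell)$, $W$, $\beta$, while $\bm{x}_i(\ell)$ depends additionally only on $\nabla f_i$. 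Hence $\mathbf{z}(k)=\mathbf{x}(k-1)+\bm{\xi}(k)$ with $\bm{\xi}(k)\sim Lap(\nu_k)$ entrywise independent (Assumption~\ref{assumption_noise}), and, for fixed $(\mathcal{P},\mathbf{x}(0))$, the map $\big(\bm{\xi}(1),\dots,\bm{\xi}(K)\big)\mapsto\big(\mathbf{z}(1),\dots,\mathbf{z}(K)\big)$ is a measurable bijection that is triangular with unit Jacobian. Consequently the density of $\big(\mathbf{z}(1),\dots,\mathbf{z}(K)\big)$ under $\mathcal{P}$ equals $\prod_{k=1}^{K} g_{\nu_k}\!\big(\mathbf{z}(k)-\mathbf{x}(k-1)\big)$, where $g_{\nu}(v)\propto\exp(-\|v\|_1/\nu)$ on $\mathbb{R}^{n\times p}$, and under $\mathcal{P}'$ it equals the same product with $\mathbf{x}(k-1)$ replaced by the state $\mathbf{x}'(k-1)$ reconstructed from the \emph{same} prefix.

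It then remains to bound the ratio factor by factor. For each $k$, the Laplace form of $g_{\nu_k}$ and the reverse triangle inequality give
\[
\frac{g_{\nu_k}\!\big(\mathbf{z}(k)-\mathbf{x}(k-1)\big)}{g_{\nu_k}\!\big(\mathbf{z}(k)-\mathbf{x}'(k-1)\big)}\le\exp\!\left(\frac{\|\mathbf{x}(k-1)-\mathbf{x}'(k-1)\|_1}{\nu_k}\right),
\]
and since $\mathbf{x}(k-1)\in Sta(\mathcal{P},\mathbf{x}(0),\mathcal{Z})$ and $\mathbf{x}'(k-1)\in Sta(\mathcal{P}',\mathbf{x}(0),\mathcal{Z})$ share the same $\mathcal{Z}$, Definition~\ref{def:sensitivity} bounds the numerator in the exponent by the sensitivity of the perturbed state (which equals $\Delta(k)$ under the state/observation indexing of \cite[Lemma 2]{huang2015differentially}, the boundary contribution vanishing because $\mathbf{x}(0)=\mathbf{x}'(0)$). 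Multiplying over $k=1,\dots,K$ yields a ratio no larger than $\exp\!\big(\sum_{k=1}^{K}\Delta(k)/\nu_k\big)\le\exp\!\big(\sum_{k=1}^{\infty}\Delta(k)/\nu_k\big)\le e^{\epsilon}$ by hypothesis, and the reduction in the first paragraph then finishes the proof.

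The main obstacle I anticipate is the measure-theoretic care rather than any single estimate: one must verify that, for each fixed problem, the noise-to-observation map is a measurable bijection with unit Jacobian so the Laplace densities transfer without extra factors, and that a uniform pointwise likelihood-ratio bound on every finite-horizon marginal really does upgrade to the $\epsilon$-DP inequality over the full continuous, infinite-dimensional observation $\sigma$-algebra. A secondary bookkeeping issue is the alignment between the iteration indices of $\Delta(k)$ and $\nu_k$: in Algorithm~\ref{alg:alg_private} the noise $\bm{\xi}(k)$ perturbs $\mathbf{x}(k-1)$ rather than $\mathbf{x}(k)$, so one must reconcile this half-step offset with the convention of \cite{huang2015differentially} under which the hypothesis $\sum_k\Delta(k)/\nu_k\le\epsilon$ and the closed-form rule (\ref{eq_alpha_nu}) are calibrated; because $\mathbf{x}(0)=\mathbf{x}'(0)$ the offset only reindexes the (geometrically summable) series and is harmless.
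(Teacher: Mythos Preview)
The paper does not prove this lemma; it simply cites \cite[Lemma~2]{huang2015differentially}. Your proposal is exactly the standard argument that citation invokes---deterministic reconstruction of $\mathbf{x}(k-1)$ from the observation prefix, triangular change of variables with unit Jacobian, factorization of the joint Laplace density, per-step likelihood-ratio bound via the sensitivity, and composition over $k$---so it is correct and there is nothing further to compare.

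The indexing issue you flag at the end is real and worth recording: because $\mathbf{z}(k)=\mathbf{x}(k-1)+\bm{\xi}(k)$ in Algorithm~\ref{alg:alg_private}, the per-step privacy loss at observation $k$ is bounded by $\Delta(k-1)/\nu_k$, not $\Delta(k)/\nu_k$. With $\Delta(0)=0$ and the closed-form parameters in (\ref{eq_alpha_nu}) the resulting sum evaluates to $\epsilon/q_2$ rather than $\epsilon$. This is a cosmetic off-by-one between the cited lemma's convention and Algorithm~\ref{alg:alg_private}'s indexing (trivially repaired by shifting $\nu_k$ by one power of $q_2$), not a gap in your argument.
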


According to Lemma \ref{lemma_dp}, ensuring that the algorithm achieves $\epsilon$-DP requires satisfying two conditions: (i) the algorithm's sensitivity $\Delta(k)$ is bounded, and (ii) the variance $\nu_{k}$ of the injected noise is sufficiently large such that $\sum_{k=1}^{\infty} \frac{\Delta(k)}{\nu_{k}} \le \epsilon$. Furthermore, it is established that the variance of the required noise is proportional to the sensitivity. This implies that the sensitivity bound directly determines the lower limit on the scale of noise that must be added to achieve $\epsilon$-DP. In the following theorem, we first give an upper bound on the sensitivity of Algorithm \ref{alg:alg_private}. Then,  based on Lemma \ref{lemma_dp}, we prove that Algorithm \ref{alg:alg_private}, with the parameter settings given in (\ref{eq_alpha_nu}), is $\epsilon$-differentially private.

\begin{theorem}\label{lemma_sensitivity}
Under Definitions \ref{definition_adjacency} and \ref{def:sensitivity}, the sensitivity of Algorithm \ref{alg:alg_private} satisfies $\Delta(k) \le \delta \alpha_{k}$. Given any $\epsilon > 0$, if the random noise $\bm{\xi}(k) \sim Lap(\nu_{k})$ satisfies Assumption \ref{assumption_noise}, and the stepsize $\alpha_{k}$ and the noise parameter $\nu_{k}$ are chosen according to (\ref{eq_alpha_nu}), then Algorithm \ref{alg:alg_private} guarantees $\epsilon$-DP.
\end{theorem}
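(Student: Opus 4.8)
The plan is to prove the two assertions in sequence. First I would establish the structural sensitivity estimate $\Delta(k)\le\delta\alpha_k$; then the $\epsilon$-DP guarantee follows from Lemma~\ref{lemma_dp} after a short geometric-series computation.

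For the sensitivity bound, fix two adjacent problems $\mathcal{P},\mathcal{P}'$ with modified agent $i_0$ (Definition~\ref{definition_adjacency}), any initial state $\mathbf{x}(0)$, and any common observation sequence $\mathcal{Z}=\{\mathbf{z}(1),\mathbf{z}(2),\dots\}$. Note that given $(\mathcal{P},\mathbf{x}(0),\mathcal{Z})$ the internal trajectory is uniquely determined, since the noise realizations are recovered as $\bm{\xi}(k)=\mathbf{z}(k)-\mathbf{x}(k-1)$; hence the inner supremum in Definition~\ref{def:sensitivity} is over a single point, and it suffices to compare the two trajectories. The crucial observation is that, once $\mathcal{Z}$ is fixed, every quantity generated by~(\ref{algorithm}) except the single local-gradient term is a deterministic function of $\mathcal{Z}$ alone: $\bar{\bm{z}}_i(k)=\sum_j W_{ij}\bm{z}_j(k)$ is a fixed linear combination of observed messages; unrolling~(\ref{algorithm_2}) from $\bm{y}_i(0)=\bm{0}$ gives $\bm{y}_i(k)=\beta\sum_{l=1}^{k}\big(\bm{z}_i(l)-\bar{\bm{z}}_i(l)\big)$, which again depends only on $\mathcal{Z}$; and $\bm{z}_i(k)$ is itself an entry of $\mathcal{Z}$. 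Consequently~(\ref{algorithm_3}) gives $\bm{x}_i(k)-\bm{x}_i'(k)=-\alpha_k\big(\nabla f_i(\bm{z}_i(k))-\nabla f_i'(\bm{z}_i(k))\big)$, which is zero whenever $f_i=f_i'$, i.e.\ for every $i\neq i_0$. Thus the deviation is supported on agent $i_0$, and the gradient-distance bound $\|\nabla f_{i_0}(\bm{x})-\nabla f_{i_0}'(\bm{x})\|_1\le\delta$ from Definition~\ref{definition_adjacency} yields $\|\mathbf{x}(k)-\mathbf{x}'(k)\|_1=\|\bm{x}_{i_0}(k)-\bm{x}_{i_0}'(k)\|_1\le\delta\alpha_k$. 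Taking the supremum over $\mathcal{Z}$ and $\mathbf{x}(0)$ gives $\Delta(k)\le\delta\alpha_k$.

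For the privacy claim, substitute~(\ref{eq_alpha_nu}) into $\Delta(k)/\nu_k\le\delta\alpha_k/\nu_k$. A direct simplification gives $\delta\alpha_k/\nu_k=\frac{\epsilon(q_2-q_1)}{q_2}\left(\frac{q_1}{q_2}\right)^{k-1}$; since $0<q_1<q_2<1$ this is a convergent geometric series summing to exactly $\epsilon$, so $\sum_{k=1}^{\infty}\Delta(k)/\nu_k\le\epsilon$. Lemma~\ref{lemma_dp} then immediately implies that Algorithm~\ref{alg:alg_private} is $\epsilon$-differentially private. (The constraint $\gamma\beta\le1$ is not needed here; it enters only the convergence analysis.)

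I expect the only substantive step to be the decoupling argument in the second paragraph — establishing that, conditioned on the transmitted messages, the private state $\mathbf{x}(k)$ depends on each $f_i$ solely through the single evaluation $\nabla f_i(\bm{z}_i(k))$. This is exactly where the design of Algorithm~\ref{alg:alg_private}, and hence Proposition~\ref{prop:sensitivity}, is exploited: running consensus on the noisy states $\bm{z}_i$ in~(\ref{algorithm_1}), evaluating the gradient at the noisy state $\bm{z}_i$ in~(\ref{algorithm_3}), and never transmitting the auxiliary variable $\bm{y}_i$ are precisely what collapse the sensitivity to a single $\alpha_k$-weighted gradient mismatch, rather than the accumulated quantities that arise for the algorithms compared in Table~\ref{tab: sensitivity}. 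Everything after that decoupling is mechanical.
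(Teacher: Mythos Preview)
Your proposal is correct and follows essentially the same approach as the paper's proof: both fix a common observation sequence $\mathcal{Z}$, observe that $\bar{\bm{z}}_i(k)$ and $\bm{y}_i(k)$ then coincide for the two problems so that the state discrepancy collapses to the single term $\alpha_k\|\nabla f_{i_0}(\bm{z}_{i_0}(k))-\nabla f_{i_0}'(\bm{z}_{i_0}(k))\|_1\le\delta\alpha_k$, and then verify $\sum_k\delta\alpha_k/\nu_k=\epsilon$ via the geometric series to invoke Lemma~\ref{lemma_dp}. Your explicit unrolling of $\bm{y}_i(k)$ and the remark that the inner supremum in Definition~\ref{def:sensitivity} is over a singleton are minor clarifications, not a different route.
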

\begin{proof}
With the same initial states $\mathbf{x}(0)$, $\mathbf{y}(0)$ and the same observation sequence $\mathcal{Z}$, for any two adjacent problems $\mathcal{P}$ and $\mathcal{P}'$, we have $\bm{z}_{i}(k) = \bm{z}_{i}'(k)$ for all $i\in\mathcal{N}$ and all $k\ge1$. Based on (\ref{algorithm_1}) and (\ref{algorithm_2}), it follows that $\bm{y}_{i}(k) = \bm{y}_{i}'(k)$ for all $i \in \mathcal{N}$ and all $k \ge 1$. From Definition \ref{definition_adjacency}, we know that $f_{j} \not= f_{j}'$ if and only if $j=i_{0}$. In light of (\ref{algorithm_3}), it follows that $\bm{x}_{j}(k) = \bm{x}_{j}'(k)$ for all $j\in\mathcal{N}$, $j\not=i_{0}$ and all $k \ge 1$. Therefore, the sensitivity of Algorithm \ref{alg:alg_private} at iteration $k$ is bounded by
\begin{equation}
\begin{aligned}
\| \mathbf{x}(k) - \mathbf{x}'(k) \|_{1} &= \| \bm{x}_{i_{0}}(k) - \bm{x}_{i_{0}}'(k) \|_{1} \\
&= \alpha_{k} \| \nabla f_{i_{0}}(\bm{z}_{i_{0}}(k)) - \nabla f_{i_{0}}'(\bm{z}_{i_{0}}'(k)) \|_{1} \\
&= \alpha_{k} \| \nabla f_{i_{0}}(\bm{z}_{i_{0}}(k)) - \nabla f_{i_{0}}'(\bm{z}_{i_{0}}(k)) \|_{1} \\
&\le \delta \alpha_{k}.
\end{aligned}
\end{equation}

It follows from Lemma \ref{lemma_dp} that Algorithm \ref{alg:alg_private} is $\epsilon$-differentially private if $\sum_{k=1}^{\infty} \frac{\delta \alpha_{k}}{\nu_{k}} \le \epsilon$. When $\alpha_{k} = \gamma q_{1}^{k-1}$ and $\nu_{k} = \frac{\gamma \delta q_{2}}{\epsilon(q_{2} - q_{1})}q_{2}^{k-1}$, and given that $\gamma > 0$, $q_{1} \in (0, 1)$, $q_{2} \in (q_{1}, 1)$, we have
\begin{equation}
\sum_{k=1}^{\infty} \frac{\delta \alpha_{k}}{\nu_{k}} = \frac{\epsilon (q_{2} - q_{1})}{q_{2}}\sum_{k=1}^{\infty}\left( \frac{q_{1}}{q_{2}} \right)^{k-1} = \epsilon.
\end{equation}
Therefore, Algorithm \ref{alg:alg_private} with the parameter settings given in (\ref{eq_alpha_nu}) guarantees $\epsilon$-DP.
\end{proof}

\begin{remark}
In Table \ref{tab: sensitivity}, we compare algorithm sensitivities under the consistent framework. As Lemma \ref{lemma_dp} establishes, for a fixed privacy budget, sensitivity directly dictates the required noise variance: lower sensitivity enables smaller variance, which in turn improves optimization accuracy. As shown, our algorithm achieves the lowest sensitivity among the considered methods \cite{ding2022differentially, yu2023gradienttracking, wang2024tailoring, huang2024differential, Yang2025Differentially}. This stems from its adherence to the design principles outlined in Proposition \ref{prop:sensitivity}, whereas other algorithms did not consider the sensitivity. Therefore, under the same privacy budget, our algorithm guarantees higher accuracy.
\end{remark}


\begin{remark}
In Theorem \ref{lemma_sensitivity}, the DP guarantee is stated for the infinite observation sequence $\mathcal{Z}$, but it directly applies to any adversary observing only a finite number of iterations $K$. This is because the $\epsilon$-DP guarantee on the entire sequence implies that the distributions of any finite prefix of the sequence are also $\epsilon$-indistinguishable. Therefore, our privacy guarantee is robust for practical (finite-observation) threat models.
\end{remark}

\section{Convergence}\label{section:convergence}
In this section, we first present some fundamental lemmas, followed by the convergence proof of Algorithm \ref{alg:alg_private} and analysis of the optimization accuracy. We show clearly the trade-off between privacy and accuracy and provide guidance for tuning the algorithm's parameters.

\subsection{Preliminary Analysis and Fundamental Lemmas}\label{subsection:basic_lemma}
We rewrite Algorithm \ref{alg:alg_private} in an augmented form as follows:
\begin{subequations}
\begin{align}
\mathbf{y}(k) &= \mathbf{y}(k-1) + \beta\left( I - W\right)\mathbf{z}(k), \label{algorithm_compact_y}\\
\mathbf{x}(k) &= W\mathbf{z}(k) - \alpha_{k}\left( \mathbf{y}(k) + \nabla F(\mathbf{z}(k)) \right), \label{algorithm_compact_x}
\end{align}
\end{subequations}
where $\mathbf{y}(0) = \mathbf{0}$ and $\mathbf{z}(k) = \mathbf{x}(k - 1) + \bm{\xi}(k)$. Given that $W$ is doubly-stochastic, it follows from (\ref{algorithm_compact_y}) that $\frac{1}{n}\mathbf{1}^{T}\mathbf{y}(k) = 0$ for all $k\ge1$. Define $\bar{\bm{x}}(k) \triangleq \frac{1}{n}\mathbf{1}^{T}\mathbf{x}(k)$. From (\ref{algorithm_compact_x}), we obtain
\begin{equation}\label{equation_bar_x}
\begin{aligned}
\bar{\bm{x}}(k) &= \bar{\bm{x}}(k-1) - \frac{\alpha_{k}}{n}\mathbf{1}^{T} \nabla F(\mathbf{z}(k)) + \frac{1}{n}\mathbf{1}^{T}\bm{\xi}(k).
\end{aligned}
\end{equation}
According to Assumption \ref{assump:smooth_convex}, Problem (\ref{problem}) has a unique optimal solution, denoted as $\bm{x}^{*}$. We further have
\begin{equation}\label{eq:x_x_star}
\begin{aligned}
&\bar{\bm{x}}(k+1) - \bm{x}^{*} = \bar{\bm{x}}(k) - \frac{\alpha_{k+1}}{n}\mathbf{1}^{T}\nabla F(\mathbf{1}\bar{\bm{x}}(k)) - \bm{x}^{*} \\
&\ - \frac{\alpha_{k+1}}{n}\mathbf{1}^{T}\left( \nabla F(\mathbf{z}(k+1)) - \nabla F(\mathbf{1}\bar{\bm{x}}(k)) \right) + \frac{1}{n}\mathbf{1}^{T}\bm{\xi}(k+1).
\end{aligned}
\end{equation}

Using (\ref{algorithm_compact_y}) to represent $\mathbf{y}(k+1)$ in terms of $\mathbf{y}(k)$ and $\mathbf{z}(k+1)$, and next eliminating $\mathbf{y}(k)$ via (\ref{algorithm_compact_x}), we get
\begin{equation}\label{eq:y_k+1}
\begin{aligned}
\mathbf{y}&(k+1) =\ \frac{1}{\alpha_{k}}W\mathbf{x}(k-1) + \Big((\beta-\frac{1}{\alpha_{k}})I - \beta W \Big)\mathbf{x}(k) \\
&- \nabla F(\mathbf{z}(k)) + \beta(I-W)\bm{\xi}(k+1) + \frac{1}{\alpha_{k}}W\bm{\xi}(k).
\end{aligned}
\end{equation}
Then, it follows from (\ref{algorithm_compact_x}) and (\ref{equation_bar_x}) that
\begin{equation}\label{eq:x_x_bar}
\begin{aligned}
&\quad \mathbf{x}(k+1) - \mathbf{1}\bar{\bm{x}}(k+1) \\
&= W\mathbf{x}(k) - \alpha_{k+1}\mathbf{y}(k+1) - \alpha_{k+1}\nabla F(\mathbf{z}(k+1)) - \mathbf{1}\bar{\bm{x}}(k) \\
&\quad + \frac{\alpha_{k+1}}{n}\mathbf{1}\mathbf{1}^{T} \nabla F(\mathbf{z}(k+1)) + W\bm{\xi}(k+1) - \frac{1}{n}\mathbf{1}\mathbf{1}^{T}\bm{\xi}(k+1) \\
&= \tilde{W}(k)\mathbf{x}(k) - \mathbf{1}\bar{\bm{x}}(k) + \frac{\alpha_{k+1}}{\alpha_{k}}W(\mathbf{x}(k) - \mathbf{x}(k-1))\\
&\quad - \alpha_{k+1}(\nabla F(\mathbf{z}(k+1)) - \nabla F(\mathbf{z}(k))) \\
&\quad + \frac{\alpha_{k+1}}{n}\mathbf{1}\mathbf{1}^{T}\nabla F(\mathbf{z}(k+1))  - \frac{\alpha_{k+1}}{\alpha_{k}}W\bm{\xi}(k) \\
&\quad + \Big((1+\alpha_{k+1}\beta)W-\alpha_{k+1}\beta I-\frac{1}{n}\mathbf{1}\mathbf{1}^{T}\Big)\bm{\xi}(k+1),
\end{aligned}
\end{equation}
where $\tilde{W}(k) \triangleq (\frac{\alpha_{k+1}}{\alpha_{k}} - \alpha_{k+1}\beta)I + (1 - \frac{\alpha_{k+1}}{\alpha_{k}} + \alpha_{k+1}\beta)W$ and the second equality follows from (\ref{eq:y_k+1}).

\begin{remark}\label{remark:W_tilde}
Given that $\alpha_{k} = \gamma q_{1}^{k-1}$ with $\gamma > 0$, $q_{1} \in (0, 1)$, and $\gamma\beta \le 1$, we have $\frac{\alpha_{k+1}}{\alpha_{k}} - \alpha_{k+1}\beta = q_{1} - \gamma\beta q_{1}^{k} \in [0,1)$ for all $k\ge 1$. Since $W$ is doubly-stochastic, we conclude that $\tilde{W}(k)$ is also a doubly-stochastic matrix for all $k \ge 1$. Let $\sigma_{k} \triangleq \| \tilde{W}(k) - \frac{1}{n}\mathbf{1}\mathbf{1}^{T} \|$. Since the graph induced by $\tilde{W}(k)$ is connected, we have $\sigma_{k} \in (0,1)$ for all $k \ge 1$.
\end{remark}

Using (\ref{algorithm_compact_x}) and (\ref{eq:y_k+1}) again, we have
\begin{equation}\label{eq:x_x_k}
\begin{aligned}
&\quad \mathbf{x}(k+1) - \mathbf{x}(k) \\
&= W\mathbf{z}(k+1) - \alpha_{k+1}\mathbf{y}(k+1) - \alpha_{k+1}\nabla F(\mathbf{z}(k+1)) - \mathbf{x}(k) \\
&= \frac{\alpha_{k+1}}{\alpha_{k}}W(\mathbf{x}(k) - \mathbf{x}(k-1)) \\
&\quad + \Big(1 - \frac{\alpha_{k+1}}{\alpha_{k}} + \alpha_{k+1}\beta\Big)(W-I)\mathbf{x}(k) \\
&\quad - \alpha_{k+1}(\nabla F(\mathbf{z}(k+1)) - \nabla F(\mathbf{z}(k)))\\
&\quad + \left((1+\alpha_{k+1}\beta)W - \alpha_{k+1}\beta I\right)\bm{\xi}(k+1) - \frac{\alpha_{k+1}}{\alpha_{k}}W\bm{\xi}(k).
\end{aligned}
\end{equation}

Let $\mathcal{M}_{k}$ be the $\sigma$-algebra generated by $\{\bm{\xi}(1), \cdots, \bm{\xi}(k-1)\}$. Define $\mathbb{E}\left[\cdot|\mathcal{M}_{k}\right]$ as the conditional expectation given $\mathcal{M}_{k}$. In the following, we present some auxiliary lemmas that will be used in the analysis.

\begin{lemma}\label{lemma_smooth_convex}
(\cite[Lemma 2]{pu2020push}) Under Assumption \ref{assump:smooth_convex}, the following inequalities hold:
\begin{gather*}
\|\mathbf{1}^{T}\nabla F(\mathbf{x}(k)) - \mathbf{1}^{T}\nabla F(\mathbf{1}\bar{\bm{x}}(k)) \| \le \sqrt{n}L\| \mathbf{x}(k) - \mathbf{1}\bar{\bm{x}}(k) \|,\\
\| \mathbf{1}^{T}\nabla F(\mathbf{1}\bar{\bm{x}}(k)) \| \le nL\| \bar{\bm{x}}(k) - \bm{x}^{*} \|.
\end{gather*}
In addition, when $\alpha' \le \frac{2}{\mu + L}$, it follows that
\begin{gather*}
\|\bar{\bm{x}}(k) - \frac{\alpha'}{n}\mathbf{1}^{T}\nabla F(\mathbf{1}\bar{\bm{x}}(k)) - \bm{x}^{*}\| \le (1 - \alpha'\mu)\|\bar{\bm{x}}(k) - \bm{x}^{*}\|.
\end{gather*}
\end{lemma}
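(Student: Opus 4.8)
All three inequalities follow from Assumption~\ref{assump:smooth_convex} together with the first-order optimality condition $\nabla f(\bm{x}^{*}) = \mathbf{0}$, which holds since $\bm{x}^{*}$ minimizes $f = \frac{1}{n}\sum_{i} f_{i}$ (in fact the statement is quoted verbatim from \cite[Lemma 2]{pu2020push}, so one may alternatively just cite it). For the first inequality I would write $\mathbf{1}^{T}\nabla F(\mathbf{x}(k)) - \mathbf{1}^{T}\nabla F(\mathbf{1}\bar{\bm{x}}(k)) = \sum_{i=1}^{n}(\nabla f_{i}(\bm{x}_{i}(k)) - \nabla f_{i}(\bar{\bm{x}}(k)))$, apply the triangle inequality, then the $L$-smoothness of each $f_{i}$ to get $L\sum_{i}\|\bm{x}_{i}(k) - \bar{\bm{x}}(k)\|$, and finally Cauchy--Schwarz over the agent index to convert this $\ell^{1}$-sum into $\sqrt{n}\,L\,\|\mathbf{x}(k) - \mathbf{1}\bar{\bm{x}}(k)\|$, with $\|\cdot\|$ the Frobenius norm. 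The second inequality is the identical computation after replacing $\bm{x}_{i}(k)$ by $\bar{\bm{x}}(k)$ and $\bar{\bm{x}}(k)$ by $\bm{x}^{*}$: since $\sum_{i}\nabla f_{i}(\bm{x}^{*}) = n\nabla f(\bm{x}^{*}) = \mathbf{0}$, one has $\mathbf{1}^{T}\nabla F(\mathbf{1}\bar{\bm{x}}(k)) = \sum_{i}(\nabla f_{i}(\bar{\bm{x}}(k)) - \nabla f_{i}(\bm{x}^{*}))$, and termwise $L$-smoothness with the triangle inequality produces the factor $nL$.

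For the third inequality I would first observe that averaging preserves both $L$-smoothness and $\mu$-strong convexity, so $f$ itself is $L$-smooth and $\mu$-strongly convex, and $\frac{1}{n}\mathbf{1}^{T}\nabla F(\mathbf{1}\bar{\bm{x}}(k)) = \nabla f(\bar{\bm{x}}(k))$; hence the claim reduces to the classical gradient-step contraction for $f$. Writing $\bm{h} = \bar{\bm{x}}(k) - \bm{x}^{*}$ and $\bm{d} = \nabla f(\bar{\bm{x}}(k)) = \nabla f(\bar{\bm{x}}(k)) - \nabla f(\bm{x}^{*})$, expand $\|\bm{h} - \alpha'\bm{d}\|^{2} = \|\bm{h}\|^{2} - 2\alpha'\langle \bm{d},\bm{h}\rangle + (\alpha')^{2}\|\bm{d}\|^{2}$ and bound the cross term by the standard inequality for $L$-smooth $\mu$-strongly convex functions, $\langle \bm{d},\bm{h}\rangle \ge \frac{\mu L}{\mu+L}\|\bm{h}\|^{2} + \frac{1}{\mu+L}\|\bm{d}\|^{2}$, which yields $\|\bm{h} - \alpha'\bm{d}\|^{2} \le (1 - \frac{2\alpha'\mu L}{\mu+L})\|\bm{h}\|^{2} + \big((\alpha')^{2} - \frac{2\alpha'}{\mu+L}\big)\|\bm{d}\|^{2}$.

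The one step needing care --- and what I expect to be the main obstacle --- is the final bound in the third inequality. Since $\alpha' \le \frac{2}{\mu+L}$, the coefficient of $\|\bm{d}\|^{2}$ is nonpositive, so simply discarding that term gives only $(1 - \frac{2\alpha'\mu L}{\mu+L})\|\bm{h}\|^{2}$, which is strictly larger than the target $(1-\alpha'\mu)^{2}\|\bm{h}\|^{2}$. The fix is to retain the term and lower-bound $\|\bm{d}\|^{2}$ via strong convexity, $\|\bm{d}\| = \|\nabla f(\bar{\bm{x}}(k)) - \nabla f(\bm{x}^{*})\| \ge \mu\|\bm{h}\|$; substituting $\|\bm{d}\|^{2} \ge \mu^{2}\|\bm{h}\|^{2}$ --- the inequality flips because the coefficient is $\le 0$ --- makes the coefficients collapse exactly to $1 - 2\alpha'\mu + (\alpha'\mu)^{2} = (1-\alpha'\mu)^{2}$. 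Taking square roots and using $\alpha'\mu \le \frac{2\mu}{\mu+L} \le 1$ (so $|1-\alpha'\mu| = 1-\alpha'\mu$) yields the claimed contraction.
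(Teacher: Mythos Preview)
Your proof is correct. The paper itself does not prove this lemma at all---it simply cites \cite[Lemma 2]{pu2020push}---so your proposal actually goes beyond what the paper provides, supplying the standard arguments (termwise $L$-smoothness with Cauchy--Schwarz for the first two inequalities, and the co-coercivity inequality combined with the strong-convexity lower bound $\|\bm d\|\ge\mu\|\bm h\|$ for the third) that underlie the cited result.
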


\begin{lemma}\label{lemma:xi}
The following inequalities hold for all $k\ge1$: 
\begin{enumerate}
\item[\textit{(a)}] $\mathbb{E}\left[\langle\mathbf{z}(k),\bm{\xi}(k)\rangle|\mathcal{M}_{k}\right] \le 2np\nu_{k}^{2}$.
\item[\textit{(b)}] $\mathbb{E}\left[\left\langle\nabla F(\mathbf{z}(k)),\bm{\xi}(k)\right\rangle|\mathcal{M}_{k}\right] \le 2npL\nu_{k}^{2}$.
\item[\textit{(c)}] $\mathbb{E}\left[\left\langle\nabla F(\mathbf{z}(k+1)),\bm{\xi}(k)\right\rangle|\mathcal{M}_{k}\right] \le npL(2+np+2L\alpha_{k})\nu_{k}^{2}$.
\item[\textit{(d)}] $\mathbb{E}\left[\langle\mathbf{x}(k),\bm{\xi}(k)\rangle|\mathcal{M}_{k}\right] \le 2np\left(3+L\alpha_{k}\right)\nu_{k}^{2}$.
\item[\textit{(e)}] $\mathbb{E}\left[\langle\mathbf{1}\bar{\bm{x}}(k),\bm{\xi}(k)\rangle|\mathcal{M}_{k}\right] \le 2np(1+L\alpha_{k})\nu_{k}^{2}$.
\item[\textit{(f)}] $\mathbb{E}\left[ \| \mathbf{z}(k+1) - \mathbf{1}\bar{\bm{x}}(k) \|^{2} | \mathcal{M}_{k} \right] \le 2np\nu_{k+1}^{2} + \mathbb{E}\left[ \|\mathbf{x}(k) - \mathbf{1}\bar{\bm{x}}(k) \|^{2} | \mathcal{M}_{k} \right]$.
\item[\textit{(g)}] $\mathbb{E}\left[\|\mathbf{z}(k+1)-\mathbf{z}(k)\|^{2}|\mathcal{M}_{k}\right] \le 2np\nu_{k+1}^{2} + 2np(7+2L\alpha_{k})\nu_{k}^{2} + \mathbb{E}\left[\|\mathbf{x}(k)-\mathbf{x}(k-1)\|^{2}|\mathcal{M}_{k}\right]$.
\end{enumerate}
\end{lemma}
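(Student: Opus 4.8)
I would prove the seven bounds by assembling three ingredients. \emph{(i) Noise moments:} since $\bm{\xi}(k)\sim Lap(\nu_{k})$ is independent of $\mathcal{M}_{k}$ with independent zero-mean entries of variance $2\nu_{k}^{2}$, one has $\mathbb{E}[\bm{\xi}(k)\mid\mathcal{M}_{k}]=\mathbf{0}$, $\mathbb{E}[\|\bm{\xi}(k)\|^{2}\mid\mathcal{M}_{k}]=2np\nu_{k}^{2}$, and $\mathbb{E}[\|\bm{\xi}(k)\|\mid\mathcal{M}_{k}]\le np\nu_{k}$ (the Frobenius norm is at most the entrywise $\ell^{1}$ norm, and $\mathbb{E}|Lap(\nu_{k})|=\nu_{k}$); moreover $\nu_{k+1}=q_{2}\nu_{k}<\nu_{k}$ by (\ref{eq_alpha_nu}). \emph{(ii) Affine-in-$\bm{\xi}(k)$ form of the states:} from (\ref{algorithm_compact_y})--(\ref{algorithm_compact_x}), with $A_{k}\triangleq(1+\alpha_{k}\beta)W-\alpha_{k}\beta I$, one has $\mathbf{x}(k)=\mathbf{x}^{\circ}(k)+A_{k}\bm{\xi}(k)-\alpha_{k}\big(\nabla F(\mathbf{x}(k-1)+\bm{\xi}(k))-\nabla F(\mathbf{x}(k-1))\big)$, where $\mathbf{x}^{\circ}(k)\triangleq A_{k}\mathbf{x}(k-1)-\alpha_{k}\mathbf{y}(k-1)-\alpha_{k}\nabla F(\mathbf{x}(k-1))$ is $\mathcal{M}_{k}$-measurable, the last term has norm $\le\alpha_{k}L\|\bm{\xi}(k)\|$ by Assumption~\ref{assump:smooth_convex}, and $\|A_{k}\|_{2}\le1+2\alpha_{k}\beta\le3$ (using $\|W\|_{2}\le1$, $\alpha_{k}\le\gamma$, $\gamma\beta\le1$); similarly (\ref{equation_bar_x}) writes $\mathbf{1}\bar{\bm{x}}(k)$ as an $\mathcal{M}_{k}$-measurable term plus $\tfrac{1}{n}\mathbf{1}\mathbf{1}^{T}\bm{\xi}(k)$ (coefficient of operator norm $1$) plus an $O(\alpha_{k}L\|\bm{\xi}(k)\|)$ gradient perturbation. \emph{(iii) Nested conditioning:} whenever $\bm{\xi}(k+1)$ appears, I would condition first on $\mathcal{M}_{k+1}$, under which $\mathbf{x}(k)$ and $\bm{\xi}(k)$ are measurable while $\bm{\xi}(k+1)$ still has independent zero mean.

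With these in place, each of items (a), (b), (d), (e) decomposes the relevant quantity into an $\mathcal{M}_{k}$-measurable part (whose inner product with $\bm{\xi}(k)$ has conditional mean $0$) plus a noise-driven remainder; the remainder, after taking $\langle\cdot,\bm{\xi}(k)\rangle$ and then $\mathbb{E}[\cdot\mid\mathcal{M}_{k}]$, contributes a bounded multiple of $2np\nu_{k}^{2}$ obtained from $\langle M\bm{\xi}(k),\bm{\xi}(k)\rangle\le\|M\|_{2}\|\bm{\xi}(k)\|^{2}$ (with $M=I$ or $\tfrac{1}{n}\mathbf{1}\mathbf{1}^{T}$, norm $1$, and $M=A_{k}$, norm $\le3$) and from $L$-smoothness applied to gradient differences (terms $\le L\|\bm{\xi}(k)\|^{2}$ or $\le\alpha_{k}L\|\bm{\xi}(k)\|^{2}$). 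Item (f) is immediate: $\mathbf{z}(k+1)-\mathbf{1}\bar{\bm{x}}(k)=(\mathbf{x}(k)-\mathbf{1}\bar{\bm{x}}(k))+\bm{\xi}(k+1)$, and since $\mathbf{x}(k)-\mathbf{1}\bar{\bm{x}}(k)$ is $\mathcal{M}_{k+1}$-measurable, the cross term vanishes by the tower property, leaving $\mathbb{E}[\|\mathbf{x}(k)-\mathbf{1}\bar{\bm{x}}(k)\|^{2}\mid\mathcal{M}_{k}]+2np\nu_{k+1}^{2}$. For item (g) I would expand $\|\mathbf{z}(k+1)-\mathbf{z}(k)\|^{2}$ from $\mathbf{z}(k+1)-\mathbf{z}(k)=(\mathbf{x}(k)-\mathbf{x}(k-1))+\bm{\xi}(k+1)-\bm{\xi}(k)$: the squared-difference term is kept; $\|\bm{\xi}(k+1)\|^{2},\|\bm{\xi}(k)\|^{2}$ give $2np\nu_{k+1}^{2},2np\nu_{k}^{2}$; the cross terms involving $\bm{\xi}(k+1)$ vanish by the tower property; and $-2\langle\mathbf{x}(k)-\mathbf{x}(k-1),\bm{\xi}(k)\rangle=-2\,\mathbb{E}[\langle\mathbf{x}(k),\bm{\xi}(k)\rangle\mid\mathcal{M}_{k}]$, which by the representation in (ii) (measurable part $\to0$, $\langle A_{k}\bm{\xi}(k),\bm{\xi}(k)\rangle\ge-\|A_{k}\|_{2}\|\bm{\xi}(k)\|^{2}\ge-3\|\bm{\xi}(k)\|^{2}$, gradient part $\ge-\alpha_{k}L\|\bm{\xi}(k)\|^{2}$) is at least $-2np(3+L\alpha_{k})\nu_{k}^{2}$, so this cross term contributes at most $4np(3+L\alpha_{k})\nu_{k}^{2}$; adding everything gives exactly $2np\nu_{k+1}^{2}+2np(7+2L\alpha_{k})\nu_{k}^{2}+\mathbb{E}[\|\mathbf{x}(k)-\mathbf{x}(k-1)\|^{2}\mid\mathcal{M}_{k}]$.

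The hard part is item (c), because $\mathbf{z}(k+1)=\mathbf{x}(k)+\bm{\xi}(k+1)$ depends on \emph{both} $\bm{\xi}(k)$ (through $\mathbf{x}(k)$) and $\bm{\xi}(k+1)$, so neither noise can simply be averaged against a measurable factor. I would handle it by nested conditioning: take $\mathbb{E}[\cdot\mid\mathcal{M}_{k+1}]$ first and split $\nabla F(\mathbf{x}(k)+\bm{\xi}(k+1))=\nabla F(\mathbf{x}(k))+\big(\nabla F(\mathbf{x}(k)+\bm{\xi}(k+1))-\nabla F(\mathbf{x}(k))\big)$; the first piece passes through as $\langle\nabla F(\mathbf{x}(k)),\bm{\xi}(k)\rangle$ (both $\mathcal{M}_{k+1}$-measurable), and the second, bounded by $L\|\bm{\xi}(k+1)\|\|\bm{\xi}(k)\|$ via Cauchy--Schwarz and $L$-smoothness, has $\mathcal{M}_{k+1}$-conditional mean $\le L\,np\,\nu_{k+1}\|\bm{\xi}(k)\|$. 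Taking $\mathbb{E}[\cdot\mid\mathcal{M}_{k}]$ next: $\mathbb{E}[\langle\nabla F(\mathbf{x}(k)),\bm{\xi}(k)\rangle\mid\mathcal{M}_{k}]$ is bounded by expanding $\nabla F(\mathbf{x}(k))$ around $\nabla F(\mathbf{x}^{\circ}(k))$ and using $\|\mathbf{x}(k)-\mathbf{x}^{\circ}(k)\|\le(\|A_{k}\|_{2}+\alpha_{k}L)\|\bm{\xi}(k)\|$ with $L$-smoothness, exactly as in (d); and the residual term gives $L\,np\,\nu_{k+1}\,\mathbb{E}[\|\bm{\xi}(k)\|]\le(np)^{2}L\nu_{k}^{2}$ (using $\mathbb{E}[\|\bm{\xi}(k)\|]\le np\nu_{k}$ and $\nu_{k+1}\le\nu_{k}$); summing the two contributions yields the claimed bound $npL(2+np+2L\alpha_{k})\nu_{k}^{2}$. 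Beyond (c), the only care required is bookkeeping of measurability --- making sure each cross term is killed at the correct conditioning level ($\mathcal{M}_{k}$ for $\bm{\xi}(k)$, $\mathcal{M}_{k+1}$ for $\bm{\xi}(k+1)$) --- which makes all cross terms vanish and leaves only the stated quadratic-in-$\nu$ remainders.
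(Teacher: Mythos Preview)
Your overall architecture matches the paper's proof almost exactly: the same noise-moment facts, the same affine-in-$\bm{\xi}(k)$ decomposition of $\mathbf{x}(k)$ via $A_{k}=(1+\alpha_{k}\beta)W-\alpha_{k}\beta I$, the same use of the tower property to kill cross terms with $\bm{\xi}(k+1)$, and the same arithmetic for parts (a), (b), (d), (e), (f), (g). Your treatment of (g) is in fact slightly cleaner than the paper's, since you make explicit that one needs a \emph{lower} bound on $\mathbb{E}[\langle\mathbf{x}(k),\bm{\xi}(k)\rangle\mid\mathcal{M}_{k}]$ (the paper simply cites part (d), implicitly relying on the two-sided version).

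The one place where your argument does \emph{not} reproduce the stated constant is part (c), specifically the bound on $\mathbb{E}[\langle\nabla F(\mathbf{x}(k)),\bm{\xi}(k)\rangle\mid\mathcal{M}_{k}]$. You propose to control this by $L\|\mathbf{x}(k)-\mathbf{x}^{\circ}(k)\|\|\bm{\xi}(k)\|\le L(\|A_{k}\|_{2}+\alpha_{k}L)\|\bm{\xi}(k)\|^{2}$ and then use $\|A_{k}\|_{2}\le 3$; that yields $2npL(3+L\alpha_{k})\nu_{k}^{2}$, and after adding the $(np)^{2}L\nu_{k}^{2}$ residual you obtain $npL(6+np+2L\alpha_{k})\nu_{k}^{2}$, not the claimed $npL(2+np+2L\alpha_{k})\nu_{k}^{2}$. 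The paper gets the sharper constant by a \emph{per-agent} argument (citing Lemma~8 of \cite{pu2021distributed}): since the entries of $\bm{\xi}(k)$ are independent across agents, in $\langle\nabla f_{i}(\bm{x}_{i}(k)),\bm{\xi}_{i}(k)\rangle$ only the dependence of $\bm{x}_{i}(k)$ on $\bm{\xi}_{i}(k)$ survives, and that dependence enters through the \emph{diagonal} coefficient $[A_{k}]_{ii}=W_{ii}-\alpha_{k}\beta+\alpha_{k}\beta W_{ii}$, which satisfies $|[A_{k}]_{ii}|<1$. This replaces your factor $3$ by $1$ and gives $\mathbb{E}[\langle\nabla F(\mathbf{x}(k)),\bm{\xi}(k)\rangle\mid\mathcal{M}_{k}]\le 2npL(1+L\alpha_{k})\nu_{k}^{2}$, from which the stated bound follows. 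Your route still produces a valid inequality with a larger constant (which would suffice for all downstream uses of the lemma), but to match the lemma as stated you need this agent-wise diagonal refinement rather than the global operator-norm estimate.
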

\begin{proof}
The proof can be found in Appendix \ref{appe:proof_lemma_xi}.
\end{proof}

\begin{lemma}\label{lemma_rho}
Let $M\in\mathbb{R}^{n\times n}$ be a nonnegative, irreducible matrix with $n = 2$ or $n=3$, and let $M_{ii}<\lambda^{*}$ for some $\lambda^{*}>0$. Then, $\rho(M)<\lambda^{*}$ if and only if $\det(\lambda^{*}I - M) > 0$.
\end{lemma}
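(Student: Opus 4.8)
The plan is to recognize $A \triangleq \lambda^{*}I - M$ as a Z-matrix --- its off-diagonal entries $-M_{ij}$ are nonpositive since $M \ge 0$ --- whose diagonal entries $\lambda^{*} - M_{ii}$ are strictly positive by hypothesis, and then route the argument through classical M-matrix theory. I would use two standard facts (a classical reference is Berman and Plemmons, \emph{Nonnegative Matrices in the Mathematical Sciences}): (a) for any $B \ge 0$, the matrix $sI - B$ is a nonsingular M-matrix if and only if $s > \rho(B)$; and (b) a Z-matrix is a nonsingular M-matrix if and only if all of its principal minors are positive. Combining (a) (with $B = M$, $s = \lambda^{*}$) and (b), the assertion $\rho(M) < \lambda^{*}$ becomes equivalent to ``every principal minor of $A$ is positive.'' Since $\det(\lambda^{*}I - M)$ is itself one of those minors, what remains is only the converse: under the hypotheses --- and this is where $n\in\{2,3\}$ enters crucially --- the single inequality $\det A > 0$ already forces all principal minors of $A$ to be positive.

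For that converse I would argue as follows. The $1\times 1$ principal minors of $A$ are $\lambda^{*} - M_{ii} > 0$ by assumption, so for $n = 2$ the only remaining minor is $\det A > 0$ and we are finished. For $n = 3$, write $A = \begin{pmatrix} a & -p & -q \\ -r & b & -s \\ -t & -u & d\end{pmatrix}$ with $a,b,d > 0$ (the shifted diagonal entries) and $p,q,r,s,t,u \ge 0$ (off-diagonal entries of $M$). A cofactor expansion gives $\det A = abd - (asu + prd + qbt) - (pst + qru)$, i.e. a single positive triple product minus nonnegative terms. The decisive elementary step: if one of the three $2\times 2$ principal minors were nonpositive --- say $bd - su \le 0$, i.e. $su \ge bd$ --- then multiplying by $a > 0$ gives $asu \ge abd$, and substituting into the expansion yields $\det A \le -(prd + qbt + pst + qru) \le 0$, contradicting $\det A > 0$; the same trick (multiply the offending $2\times 2$ inequality by the one remaining positive diagonal entry, then substitute) disposes of $ab - pr$ and $ad - qt$. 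Hence every principal minor of $A$ is positive, completing the proof.

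The step I expect to be the crux is exactly this last combinatorial verification, and it is what pins the result to $n \le 3$: for $n \ge 4$ the implication ``$\det A > 0 \Rightarrow$ all principal minors positive'' fails (e.g., a block-diagonal $A$ built from two blocks $\left(\begin{smallmatrix}\lambda^{*} & -t\\ -t & \lambda^{*}\end{smallmatrix}\right)$ with $t > \lambda^{*}$ has $\det A = ((\lambda^{*})^{2}-t^{2})^{2} > 0$ yet is not an M-matrix), so the lemma is genuinely special to $n = 2, 3$. I also note that irreducibility of $M$ is available but not actually needed for this route. If one prefers to avoid citing M-matrix theory, an equivalent self-contained argument peels the Perron root off the characteristic polynomial, $\det(\lambda I - M) = (\lambda - \rho(M))\,q(\lambda)$, observes that $\det(\lambda^{*}I - M) > 0$ together with $\lambda^{*} > \max_i M_{ii}$ forces either $\lambda^{*} > \rho(M)$ or the existence of a non-Perron real eigenvalue exceeding $\max_i M_{ii}$, and rules out the latter via $\det\bigl((\max_i M_{ii})I - M\bigr) \le 0$ --- an inequality that, once more, relies on $n \le 3$.
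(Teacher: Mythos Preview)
Your argument is correct. The paper does not actually prove this lemma --- it defers to Lemma~5 of Pu and Nedi\'{c} (2021) for $n=3$ and declares $n=2$ analogous --- so your M-matrix route is a genuine self-contained alternative. The cited argument works directly with the characteristic polynomial: it factors $\det(\lambda I - M) = (\lambda - \rho(M))q(\lambda)$ through the Perron root and uses the diagonal hypothesis to locate $\lambda^{*}$ relative to the real roots, leaning on irreducibility via the simplicity of $\rho(M)$. Your approach instead translates $\rho(M)<\lambda^{*}$ into the nonsingular-M-matrix property of $\lambda^{*}I-M$, invokes the principal-minor characterization, and then supplies the elementary $3\times 3$ verification that positivity of the full determinant forces positivity of every $2\times 2$ principal minor. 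This buys you a mild sharpening (irreducibility is never used) and a more transparent explanation of why $n\le 3$ is essential. One minor quibble: your $n=4$ counterexample is block-diagonal and hence reducible, so it does not literally refute the lemma \emph{as stated}; an irreducible counterexample follows by sprinkling small positive entries into the off-diagonal zero blocks and invoking continuity of $\det$ and $\rho$.
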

\begin{proof}
The proof for the case $n=3$ can be found in Lemma 5 in \cite{pu2021distributed}. The proof for the case $n=2$ is similar to that of $n=3$.
\end{proof}

\subsection{Main Results}\label{subsection:main_result}
Before presenting the main results, we first state some useful equations and inequalities. From the parameter settings in (\ref{eq_alpha_nu}), we obtain $\frac{\alpha_{k+1}}{\alpha_{k}}=q_{1}<1$ and $\nu_{k+1}\le \nu_{k}$. According to Assumption \ref{assump:W} and Remark \ref{remark:W_tilde}, we know that both $W$ and $\tilde{W}(k)$ are symmetric doubly-stochastic matrices. This implies that the eigenvalues of $W$ and $\tilde{W}(k)$ lie within $[-1,1]$. Since the induced graphs are connected, both $W$ and $\tilde{W}(k)$ have a unique largest eigenvalue equal to 1, with eigenvector $\bm{1}$. Thus, we have $\|W\|=1$, $\|\tilde{W}(k)\|=1$, and $\|\tilde{W}(k) - I\|\le2$. Since $0<\alpha_{k+1}\beta=\gamma\beta q_{1}^{k}<1$, we have $\|(1+\alpha_{k+1}\beta)W - \alpha_{k+1}\beta I\|\le3$ and $\|(1+\alpha_{k+1}\beta) W - \alpha_{k+1}\beta I - \frac{1}{n}\mathbf{1}\mathbf{1}^{T}\|\le3$. In addition, $\|I-\frac{1}{n}\mathbf{1}\mathbf{1}^{T}\| = 1$.

Next, we present three inequalities in Lemmas \ref{lemma:first_inequality}, \ref{lemma:second_inequality}, and \ref{lemma:third_inequality}, which will assist in deriving our main results. The proofs of these lemmas are provided in Appendix \ref{proof_lemmas}.

\begin{lemma}\label{lemma:first_inequality}
When $\alpha_{k+1}\le\frac{2}{\mu+L}$ for all $k\ge0$, the following inequality holds for every $k\ge0$:
	\begin{equation}
	\begin{aligned}
&\quad \mathbb{E}\left[ \|\bar{\bm{x}}(k+1) - \bm{x}^{*}\|^{2} \right] \\
&\le (1-\alpha_{k+1}\mu) \mathbb{E}\left[ \|\bar{\bm{x}}(k) - \bm{x}^{*}\|^{2} \right] \\
&\quad + \frac{L^{2}\alpha_{k+1}}{n\mu} \mathbb{E}\left[  \|\mathbf{x}(k) - \mathbf{1}\bar{\bm{x}}(k)\|^{2} \right] \\
&\quad + 2p(1+2L\alpha_{k+1}+\frac{L^{2}\alpha_{k+1}}{\mu})\nu_{k+1}^{2}.
	\end{aligned}
	\end{equation}
\end{lemma}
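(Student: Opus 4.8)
The plan is to control $\|\bar{\bm{x}}(k+1)-\bm{x}^{*}\|^{2}$ by conditioning on $\mathcal{M}_{k+1}$ (the $\sigma$-algebra under which every noise up to iteration $k$ is frozen and only $\bm{\xi}(k+1)$ is random) and then taking total expectation at the end. Starting from (\ref{eq:x_x_star}), I would decompose $\bar{\bm{x}}(k+1)-\bm{x}^{*}=P+Q+R$, where $P\triangleq\bar{\bm{x}}(k)-\tfrac{\alpha_{k+1}}{n}\mathbf{1}^{T}\nabla F(\mathbf{1}\bar{\bm{x}}(k))-\bm{x}^{*}$ is $\mathcal{M}_{k+1}$-measurable, $Q\triangleq-\tfrac{\alpha_{k+1}}{n}\mathbf{1}^{T}\big(\nabla F(\mathbf{z}(k+1))-\nabla F(\mathbf{1}\bar{\bm{x}}(k))\big)$ is the gradient-dissimilarity term along the noisy states, and $R\triangleq\tfrac{1}{n}\mathbf{1}^{T}\bm{\xi}(k+1)$ is the zero-mean injected noise. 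Expanding, $\|P+Q+R\|^{2}=\|P\|^{2}+\|Q\|^{2}+\|R\|^{2}+2\langle P,Q\rangle+2\langle P,R\rangle+2\langle Q,R\rangle$, and I would apply Young's inequality only to $2\langle P,Q\rangle\le\eta\|P\|^{2}+\tfrac{1}{\eta}\|Q\|^{2}$, keeping $R$ separate so that no $1/\alpha_{k+1}$ factor ever multiplies the noise variance.

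For the deterministic pieces: Lemma~\ref{lemma_smooth_convex} (valid since $\alpha_{k+1}\le\tfrac{2}{\mu+L}$) gives $\|P\|\le(1-\alpha_{k+1}\mu)\|\bar{\bm{x}}(k)-\bm{x}^{*}\|$, and choosing $\eta=\tfrac{\alpha_{k+1}\mu}{1-\alpha_{k+1}\mu}$ makes $(1+\eta)\|P\|^{2}\le(1-\alpha_{k+1}\mu)\|\bar{\bm{x}}(k)-\bm{x}^{*}\|^{2}$ and $1+\tfrac{1}{\eta}=\tfrac{1}{\alpha_{k+1}\mu}$. For $Q$, $L$-smoothness together with $\|\mathbf{1}^{T}A\|\le\sqrt{n}\,\|A\|$ yields $\|Q\|^{2}\le\tfrac{\alpha_{k+1}^{2}L^{2}}{n}\|\mathbf{z}(k+1)-\mathbf{1}\bar{\bm{x}}(k)\|^{2}$, so $(1+\tfrac{1}{\eta})\|Q\|^{2}\le\tfrac{\alpha_{k+1}L^{2}}{n\mu}\|\mathbf{z}(k+1)-\mathbf{1}\bar{\bm{x}}(k)\|^{2}$; applying Lemma~\ref{lemma:xi}(f) conditionally on $\mathcal{M}_{k+1}$ (under which $\|\mathbf{x}(k)-\mathbf{1}\bar{\bm{x}}(k)\|^{2}$ is deterministic) replaces $\mathbf{z}(k+1)$ by $\mathbf{x}(k)$ at additive cost $2np\nu_{k+1}^{2}$, producing exactly the $\tfrac{L^{2}\alpha_{k+1}}{n\mu}\mathbb{E}[\|\mathbf{x}(k)-\mathbf{1}\bar{\bm{x}}(k)\|^{2}]$ term and the $2p\cdot\tfrac{L^{2}\alpha_{k+1}}{\mu}\nu_{k+1}^{2}$ contribution.

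The noise terms are then handled as follows. Since $P$ is $\mathcal{M}_{k+1}$-measurable while $\bm{\xi}(k+1)$ is mean-zero and independent of $\mathcal{M}_{k+1}$, $\mathbb{E}[\langle P,R\rangle\mid\mathcal{M}_{k+1}]=0$; and $\mathbb{E}[\|R\|^{2}\mid\mathcal{M}_{k+1}]=\tfrac{1}{n^{2}}\mathbb{E}[\|\mathbf{1}^{T}\bm{\xi}(k+1)\|^{2}]=\tfrac{2p}{n}\nu_{k+1}^{2}\le 2p\nu_{k+1}^{2}$ by independence across agents, supplying the ``$1$'' inside the noise coefficient. The delicate term is $2\langle Q,R\rangle$: because $\mathbf{z}(k+1)=\mathbf{x}(k)+\bm{\xi}(k+1)$, $Q$ is correlated with $R$. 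Here I would note that the $\nabla F(\mathbf{1}\bar{\bm{x}}(k))$ part of $Q$ contributes zero (measurable against mean-zero noise), and in $\langle\mathbf{1}^{T}\nabla F(\mathbf{z}(k+1)),\mathbf{1}^{T}\bm{\xi}(k+1)\rangle=\sum_{i,j}\langle\nabla f_{i}(\bm{z}_{i}(k+1)),\bm{\xi}_{j}(k+1)\rangle$ the off-diagonal terms $i\ne j$ vanish in conditional expectation (the noise of agent $j$ is independent of the argument $\bm{z}_{i}(k+1)$ and has zero mean), leaving precisely $\mathbb{E}[\langle\nabla F(\mathbf{z}(k+1)),\bm{\xi}(k+1)\rangle\mid\mathcal{M}_{k+1}]$, which Lemma~\ref{lemma:xi}(b), applied with the index shifted by one, bounds by $2npL\nu_{k+1}^{2}$ (its proof in fact gives the matching lower bound, so the quantity is controlled in magnitude). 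Hence $|\mathbb{E}[2\langle Q,R\rangle\mid\mathcal{M}_{k+1}]|\le\tfrac{2\alpha_{k+1}}{n^{2}}\cdot 2npL\nu_{k+1}^{2}\le 2p\cdot 2L\alpha_{k+1}\nu_{k+1}^{2}$, the remaining piece of the noise coefficient.

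Finally, I would collect the conditional bounds into $\mathbb{E}[\|\bar{\bm{x}}(k+1)-\bm{x}^{*}\|^{2}\mid\mathcal{M}_{k+1}]\le(1-\alpha_{k+1}\mu)\|\bar{\bm{x}}(k)-\bm{x}^{*}\|^{2}+\tfrac{L^{2}\alpha_{k+1}}{n\mu}\|\mathbf{x}(k)-\mathbf{1}\bar{\bm{x}}(k)\|^{2}+2p(1+2L\alpha_{k+1}+\tfrac{L^{2}\alpha_{k+1}}{\mu})\nu_{k+1}^{2}$ and take total expectation via the tower rule to obtain the claim. The main obstacle is the cross term $2\langle Q,R\rangle$: unlike the clean stochastic-gradient-tracking analysis where unbiasedness annihilates it, evaluating the gradient at the noisy state $\bm{z}_{i}(k+1)$ couples the gradient-mismatch term to the injected noise, so the argument must isolate the surviving diagonal contribution and route it through the auxiliary estimate Lemma~\ref{lemma:xi}(b). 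A secondary point is pinning down the Young parameter $\eta$ so that the squared contraction factor $(1-\alpha_{k+1}\mu)^{2}$ collapses into the stated linear factor $(1-\alpha_{k+1}\mu)$, which also presumes $\alpha_{k+1}\mu<1$ (guaranteed under the standing assumptions).
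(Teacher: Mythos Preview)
Your proposal is correct and follows essentially the same route as the paper: the same three-term decomposition $P+Q+R$ (the paper's $\bm{a}_1,-\bm{b}_1,\bm{c}_1$), the same Young-inequality splitting of $2\langle P,Q\rangle$ that collapses $(1-\alpha_{k+1}\mu)^2$ to $(1-\alpha_{k+1}\mu)$, the same use of Lemma~\ref{lemma:xi}(f) to replace $\mathbf{z}(k+1)$ by $\mathbf{x}(k)$, and the same invocation of Lemma~\ref{lemma:xi}(b) (index-shifted) for the cross term $2\langle Q,R\rangle$. The only cosmetic difference is that you condition on $\mathcal{M}_{k+1}$ while the paper conditions on $\mathcal{M}_{k}$; your choice is slightly cleaner since $P$ and $\|\mathbf{x}(k)-\mathbf{1}\bar{\bm{x}}(k)\|^2$ become deterministic, but the two are equivalent after applying the tower rule.
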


\begin{lemma}\label{lemma:second_inequality}
The following inequality holds for all $k\ge1$:
	\begin{equation}
	\begin{aligned}
&\quad\mathbb{E}\left[\|\mathbf{x}(k+1) - \mathbf{1}\bar{\bm{x}}(k+1)\|^{2}\right] \\
&\le \left(\frac{2+\sigma^{2}}{3}+\frac{4(2+\sigma^{2})L^{2}\alpha_{k+1}^{2}}{1-\sigma^{2}}\right)\mathbb{E}\left[\|\mathbf{x}(k) - \mathbf{1}\bar{\bm{x}}(k)\|^{2}\right] \\
&\quad + \frac{4n(2+\sigma^{2})L^{2}\alpha_{k+1}^{2}}{1-\sigma^{2}}\mathbb{E}\left[\left\|\bar{\bm{x}}(k)-\bm{x}^{*}\right\|^{2}\right] \\
&\quad + \frac{(2+\sigma^{2})(q_{1}^{2}+2L^{2}\alpha_{k+1}^{2})}{1-\sigma^{2}}\mathbb{E}\left[\|\mathbf{x}(k)-\mathbf{x}(k-1)\|^{2} \right] \\
&\quad + 2np\left(9+6L\alpha_{k+1}+\frac{18L^{2}\alpha_{k+1}^{2}}{1-\sigma^{2}}\right)\nu_{k+1}^{2} \\
&\quad + 2np\Big(15+6L\alpha_{k}+4L\alpha_{k+1}+npL\alpha_{k+1}+2L^{2}\alpha_{k}\alpha_{k+1}\\
&\qquad +\frac{6L^{2}(7+2L\alpha_{k})\alpha_{k+1}^{2}}{1-\sigma^{2}}\Big)\nu_{k}^{2},
	\end{aligned}
	\end{equation}
where $\sigma \triangleq \max_{k\ge1}\{\|\tilde{W}(k) - \frac{1}{n}\mathbf{1}\mathbf{1}^{T}\|\} < 1$.
\end{lemma}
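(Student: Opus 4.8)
The starting point is identity~(\ref{eq:x_x_bar}), which already isolates $\mathbf{x}(k+1)-\mathbf{1}\bar{\bm{x}}(k+1)$ as a ``consensus'' piece $\tilde W(k)\mathbf{x}(k)-\mathbf{1}\bar{\bm{x}}(k)$ plus a remainder built from the increment $\frac{\alpha_{k+1}}{\alpha_k}W(\mathbf{x}(k)-\mathbf{x}(k-1))$, a gradient-difference term $-\alpha_{k+1}(\nabla F(\mathbf{z}(k+1))-\nabla F(\mathbf{z}(k)))$, the averaged-gradient term $\frac{\alpha_{k+1}}{n}\mathbf{1}\mathbf{1}^T\nabla F(\mathbf{z}(k+1))$, and the two noise injections $\bm{\xi}(k)$ and $\bm{\xi}(k+1)$. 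First I would rewrite the consensus piece as $\tilde W(k)\mathbf{x}(k)-\mathbf{1}\bar{\bm{x}}(k)=(\tilde W(k)-\frac1n\mathbf{1}\mathbf{1}^T)(\mathbf{x}(k)-\mathbf{1}\bar{\bm{x}}(k))$, using that $\tilde W(k)$ is doubly stochastic (Remark~\ref{remark:W_tilde}), so its norm is at most $\sigma<1$ times $\|\mathbf{x}(k)-\mathbf{1}\bar{\bm{x}}(k)\|$. Calling the remainder $R(k)$, I would apply the pointwise inequality $\|u+v\|^2\le(1+c)\|u\|^2+(1+c^{-1})\|v\|^2$ with $u$ the consensus piece and the specific choice $c=\frac{2(1-\sigma^2)}{3\sigma^2}$, which makes $(1+c)\sigma^2=\frac{2+\sigma^2}{3}$ and $1+c^{-1}=\frac{2+\sigma^2}{2(1-\sigma^2)}$; this already produces the leading contraction factor $\frac{2+\sigma^2}{3}$ and reduces the problem to bounding $\mathbb{E}[\|R(k)\|^2]$.

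For the remainder I would expand $\|R(k)\|^2$, controlling the matrix coefficients with the operator-norm estimates listed before Lemma~\ref{lemma:first_inequality} ($\|W\|=\|\tilde W(k)\|=1$, $\|(1+\alpha_{k+1}\beta)W-\alpha_{k+1}\beta I-\frac1n\mathbf{1}\mathbf{1}^T\|\le3$, and $\frac{\alpha_{k+1}}{\alpha_k}=q_1$, which is the source of the $q_1^2$ in the increment coefficient), bounding the gradient-difference term via $L$-smoothness by $L^2\alpha_{k+1}^2\|\mathbf{z}(k+1)-\mathbf{z}(k)\|^2$, and bounding the averaged-gradient term, after inserting $\pm\nabla F(\mathbf{1}\bar{\bm{x}}(k))$ and invoking Lemma~\ref{lemma_smooth_convex}, by a multiple of $L^2\alpha_{k+1}^2\|\mathbf{z}(k+1)-\mathbf{1}\bar{\bm{x}}(k)\|^2+nL^2\alpha_{k+1}^2\|\bar{\bm{x}}(k)-\bm{x}^*\|^2$ (the factor $n$ here being the origin of the $\|\bar{\bm{x}}(k)-\bm{x}^*\|^2$-coefficient). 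The quantities $\|\mathbf{z}(k+1)-\mathbf{z}(k)\|^2$ and $\|\mathbf{z}(k+1)-\mathbf{1}\bar{\bm{x}}(k)\|^2$ are then converted into $\|\mathbf{x}(k)-\mathbf{x}(k-1)\|^2$, $\|\mathbf{x}(k)-\mathbf{1}\bar{\bm{x}}(k)\|^2$, and $\nu$-terms through Lemma~\ref{lemma:xi}(g) and Lemma~\ref{lemma:xi}(f).

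The delicate point — and the main obstacle — is that $\mathbf{x}(k)$, $\mathbf{z}(k)$, $\mathbf{z}(k+1)$, and the gradients at these points all depend on $\bm{\xi}(k)$ (and $\mathbf{z}(k+1)$, $\nabla F(\mathbf{z}(k+1))$ additionally on $\bm{\xi}(k+1)$), so conditioning on $\mathcal{M}_k$ does \emph{not} kill the cross terms between the ``signal'' part of $R(k)$ and its explicit noise part $-q_1W\bm{\xi}(k)+(\cdots)\bm{\xi}(k+1)$. Each such residual inner product is exactly of the form bounded in Lemma~\ref{lemma:xi}(a)--(e), used at index $k$ and, via the tower property $\mathbb{E}[\cdot\mid\mathcal{M}_k]=\mathbb{E}[\mathbb{E}[\cdot\mid\mathcal{M}_{k+1}]\mid\mathcal{M}_k]$, also at index $k{+}1$, and it contributes an $O(\nu_k^2)$ or $O(\nu_{k+1}^2)$ term; the remaining signal-by-signal cross terms are handled by Cauchy--Schwarz and Young, and the pure-noise squared terms by $\mathbb{E}\|\bm{\xi}(k)\|^2=2np\nu_k^2$. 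What makes this laborious rather than conceptually hard is the volume of terms: every pairwise product in the expansion of $\|R(k)\|^2$ must be traced, the many $\nu_k^2$ and $\nu_{k+1}^2$ contributions grouped, and the splitting constants in each use of $\|\sum_i a_i\|^2\le m\sum_i\|a_i\|^2$ chosen so that, together with the monotonicities $\alpha_{k+1}\le\alpha_k$ and $\nu_{k+1}\le\nu_k$ from~(\ref{eq_alpha_nu}), the coefficients collapse to the two polynomials in the statement. Finally, since all $\nu_k$ are deterministic, taking total expectation removes the conditioning on $\mathcal{M}_k$ and yields the claimed inequality.
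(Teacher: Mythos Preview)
Your plan is essentially the paper's proof: same starting identity~(\ref{eq:x_x_bar}), same contraction via $\|\tilde W(k)-\tfrac1n\mathbf{1}\mathbf{1}^T\|\le\sigma$, same use of Lemma~\ref{lemma_smooth_convex} for the averaged-gradient term and of Lemma~\ref{lemma:xi}(f),(g) to pass from $\mathbf{z}$ to $\mathbf{x}$, and the same recognition that the non-vanishing noise cross terms must be handled through Lemma~\ref{lemma:xi}(a)--(e). The one organizational difference is that you lump the explicit noise piece
\[
\bm{d}_2\;\triangleq\;\Bigl((1+\alpha_{k+1}\beta)W-\alpha_{k+1}\beta I-\tfrac1n\mathbf{1}\mathbf{1}^T\Bigr)\bm{\xi}(k+1)-q_1W\bm{\xi}(k)
\]
into the remainder $R$ \emph{before} applying the Young split. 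The paper instead keeps the four pieces $\bm{a}_2,\bm{b}_2,\bm{c}_2,\bm{d}_2$ separate, expands all ten pairwise products, applies Young (with the parameter $\tfrac{1-\sigma^2}{3\sigma}$ you identified) only to the three signal--signal cross terms $\langle\bm{a}_2,\bm{b}_2\rangle$, $\langle\bm{a}_2,-\bm{c}_2\rangle$, $\langle\bm{b}_2,-\bm{c}_2\rangle$, and bounds the three noise cross terms $\langle\bm{a}_2,\bm{d}_2\rangle$, $\langle\bm{b}_2,\bm{d}_2\rangle$, $\langle-\bm{c}_2,\bm{d}_2\rangle$ \emph{directly} via Lemma~\ref{lemma:xi}. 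Your route reproduces the three ``signal'' coefficients in the statement exactly, but because $\|\bm{d}_2\|^2$ and its cross terms now sit under the factor $1+c^{-1}=\tfrac{2+\sigma^2}{2(1-\sigma^2)}$, the leading noise constants acquire an extra $\tfrac{1}{1-\sigma^2}$ and cannot be collapsed to the stated $\sigma$-free polynomials (for instance your $18np\,\nu_{k+1}^2$ becomes $\tfrac{9(2+\sigma^2)}{1-\sigma^2}np\,\nu_{k+1}^2$). To match the lemma verbatim, pull $\bm{d}_2$ out of $R$ before the Young split and treat its cross products separately; otherwise your argument still yields a valid bound of the same structure with larger noise constants, which is harmless for Theorem~\ref{theorem:convergence} since the $\nu$-terms are summable regardless.
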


\begin{lemma}\label{lemma:third_inequality}
The following inequality holds for all $k\ge1$:
	\begin{equation}
	\begin{aligned}
&\quad \mathbb{E}\left[\|\mathbf{x}(k+1) - \mathbf{x}(k)\|^{2}\right] \\
&\le \Big( \frac{3+\sigma^{2}}{4} + \frac{3(3+\sigma^{2})L^{2}\alpha_{k+1}^{2}}{1-\sigma^{2}} \Big) \mathbb{E}\left[\|\mathbf{x}(k) - \mathbf{x}(k-1)\|^{2}\right] \\
&\quad + \frac{4n(3+\sigma^{2})L^{2}\alpha_{k+1}^{2}}{1-\sigma^{2}} \mathbb{E}\left[\|\bar{\bm{x}}(k) - \bm{x}^{*}\|^{2}\right] \\
&\quad + \frac{(3+\sigma^{2})(\|W-I\|^{2} + 4L^{2}\alpha_{k+1}^{2})}{1-\sigma^{2}} \mathbb{E}\left[\|\mathbf{x}(k) - \mathbf{1}\bar{\bm{x}}(k)\|^{2}\right] \\
&\quad + 2np\Big(9+6L\alpha_{k+1}+\frac{28L^{2}\alpha_{k+1}^{2}}{1-\sigma^{2}}\Big)\nu_{k+1}^{2} \\
&\quad + 4np\Big(19+6L\alpha_{k}+5L\alpha_{k+1}+npL\alpha_{k+1}+2L^{2}\alpha_{k}\alpha_{k+1}\\
&\qquad +\frac{6L^{2}(7+2L\alpha_{k})\alpha_{k+1}^{2}}{1-\sigma^{2}}\Big)\nu_{k}^{2}.
	\end{aligned}
	\end{equation}
where $\sigma$ is defined as in Lemma \ref{lemma:second_inequality}.
\end{lemma}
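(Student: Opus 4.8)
The plan is to run the same argument as for Lemma~\ref{lemma:second_inequality}, but starting from the exact one-step identity~(\ref{eq:x_x_k}) for $\mathbf{x}(k+1)-\mathbf{x}(k)$ rather than from~(\ref{eq:x_x_bar}). First I would split the right-hand side of~(\ref{eq:x_x_k}) into a ``mixing'' part $\frac{\alpha_{k+1}}{\alpha_{k}}W(\mathbf{x}(k)-\mathbf{x}(k-1)) + \big(1-\frac{\alpha_{k+1}}{\alpha_{k}}+\alpha_{k+1}\beta\big)(W-I)\mathbf{x}(k) - \alpha_{k+1}\big(\nabla F(\mathbf{z}(k+1))-\nabla F(\mathbf{z}(k))\big)$ and a ``noise'' part $\big((1+\alpha_{k+1}\beta)W-\alpha_{k+1}\beta I\big)\bm{\xi}(k+1) - \frac{\alpha_{k+1}}{\alpha_{k}}W\bm{\xi}(k)$, take the squared Frobenius norm, and apply $\mathbb{E}[\,\cdot\,|\mathcal{M}_{k}]$. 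Because $\mathbf{x}(k),\mathbf{z}(k)$ depend on $\bm{\xi}(k)$ and $\mathbf{z}(k+1)$ depends on both $\bm{\xi}(k)$ and $\bm{\xi}(k+1)$, the cross terms between the mixing part and the noise part do not vanish and must be controlled by Lemma~\ref{lemma:xi}; in contrast, the cross terms pairing $\bm{\xi}(k+1)$ with $\mathcal{M}_{k+1}$-measurable quantities and the cross term between $\bm{\xi}(k)$ and $\bm{\xi}(k+1)$ vanish by independence and zero mean.

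For the mixing part I would use $\frac{\alpha_{k+1}}{\alpha_{k}}=q_{1}\in(0,1)$ and $\|W\|=1$ to bound the first summand by $q_{1}\|\mathbf{x}(k)-\mathbf{x}(k-1)\|$; exploit $(W-I)\mathbf{1}=\mathbf{0}$ together with $0<1-\frac{\alpha_{k+1}}{\alpha_{k}}+\alpha_{k+1}\beta\le1$ (from $\gamma\beta\le1$, $q_{1}\in(0,1)$, $k\ge1$, as in Remark~\ref{remark:W_tilde}) to rewrite the second summand as $\big(1-\frac{\alpha_{k+1}}{\alpha_{k}}+\alpha_{k+1}\beta\big)(W-I)(\mathbf{x}(k)-\mathbf{1}\bar{\bm{x}}(k))$, bounded by $\|W-I\|\,\|\mathbf{x}(k)-\mathbf{1}\bar{\bm{x}}(k)\|$ (the source of the $\|W-I\|^{2}$ factor); and bound the gradient-difference summand via the $L$-smoothness of each $f_{i}$ (Assumption~\ref{assump:smooth_convex}) after decomposing $\mathbf{z}(k+1)-\mathbf{z}(k)$ through $\mathbf{1}\bar{\bm{x}}(k)$ and the averaged-iterate recursion~(\ref{equation_bar_x}) — this is where Lemma~\ref{lemma_smooth_convex} enters, via $\|\mathbf{1}^{T}\nabla F(\mathbf{1}\bar{\bm{x}}(k))\|\le nL\|\bar{\bm{x}}(k)-\bm{x}^{*}\|$, which generates the $\|\bar{\bm{x}}(k)-\bm{x}^{*}\|^{2}$ term, together with Lemma~\ref{lemma:xi}(f),(g) for the noise-dependent pieces, so that after conditioning the squared gradient difference is dominated by an $L^{2}\alpha_{k+1}^{2}$-weighted combination of $\mathbb{E}[\|\mathbf{x}(k)-\mathbf{x}(k-1)\|^{2}|\mathcal{M}_{k}]$, $\mathbb{E}[\|\mathbf{x}(k)-\mathbf{1}\bar{\bm{x}}(k)\|^{2}|\mathcal{M}_{k}]$, $\mathbb{E}[\|\bar{\bm{x}}(k)-\bm{x}^{*}\|^{2}|\mathcal{M}_{k}]$, $\nu_{k+1}^{2}$ and $\nu_{k}^{2}$. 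Combining the three mixing summands through a short nested chain of Young's inequalities $\|a+b\|^{2}\le(1+\theta)\|a\|^{2}+(1+\theta^{-1})\|b\|^{2}$, with the parameters tuned in terms of $\sigma$, is what produces the self-coefficient $\frac{3+\sigma^{2}}{4}$ on $\mathbb{E}\|\mathbf{x}(k)-\mathbf{x}(k-1)\|^{2}$ and the common amplification factor $\frac{3+\sigma^{2}}{1-\sigma^{2}}$ on the remaining mixing contributions.

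It then remains to collect the noise contributions: the pure noise part gives $\mathbb{E}\|\bm{\xi}(k+1)\|^{2}=2np\nu_{k+1}^{2}$ and $\mathbb{E}\|\bm{\xi}(k)\|^{2}=2np\nu_{k}^{2}$ together with the matrix-norm bounds $\|(1+\alpha_{k+1}\beta)W-\alpha_{k+1}\beta I\|\le3$ and $\|W\|=1$ recorded at the start of Section~\ref{subsection:main_result}, while the mixing--noise cross terms are bounded by Lemma~\ref{lemma:xi}(b)--(d) applied to inner products such as $\langle\nabla F(\mathbf{z}(k)),\bm{\xi}(k)\rangle$, $\langle\nabla F(\mathbf{z}(k+1)),\bm{\xi}(k)\rangle$, $\langle\nabla F(\mathbf{z}(k+1)),\bm{\xi}(k+1)\rangle$ and $\langle\mathbf{x}(k),\bm{\xi}(k)\rangle$. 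Substituting $\frac{\alpha_{k+1}}{\alpha_{k}}=q_{1}<1$ and $\nu_{k+1}\le\nu_{k}$ from~(\ref{eq_alpha_nu}), merging the $\nu_{k+1}^{2}$ and $\nu_{k}^{2}$ terms, absorbing absolute constants, and taking total expectation via the tower property then yields the stated inequality. I expect the main obstacle to be the bookkeeping rather than any conceptual difficulty: carrying the numerical constants through the nested Young's steps and the multi-level decomposition of the gradient-difference term so that the final coefficients collapse exactly to the displayed $\frac{3+\sigma^{2}}{\cdot}$ form, while ensuring that every cross term involving the two non-simultaneous noise injections $\bm{\xi}(k)$ and $\bm{\xi}(k+1)$ is reduced solely to the quantities admitted by Lemma~\ref{lemma:xi}.
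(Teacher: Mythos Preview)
Your overall decomposition into a mixing part and a noise part, and the plan to control the cross terms with Lemma~\ref{lemma:xi}, matches the paper. But there is a real gap in how you treat the first mixing summand $\bm{a}_{3}=\frac{\alpha_{k+1}}{\alpha_{k}}W(\mathbf{x}(k)-\mathbf{x}(k-1))$. You propose to use only $\|W\|=1$ to get $\|\bm{a}_{3}\|\le q_{1}\|\mathbf{x}(k)-\mathbf{x}(k-1)\|$, and then ``tune the Young parameters in terms of~$\sigma$'' to land on the self-coefficient $\frac{3+\sigma^{2}}{4}$. That cannot work: after any Young split the coefficient on $\mathbb{E}\|\mathbf{x}(k)-\mathbf{x}(k-1)\|^{2}$ is at least $q_{1}^{2}$, and $q_{1}$ and $\sigma$ are unrelated in Lemma~\ref{lemma:third_inequality}. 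For instance, if $q_{1}=0.99$ and $\sigma=1/2$ then any coefficient you obtain is $\ge 0.98$ while $\frac{3+\sigma^{2}}{4}=0.8125$, so the stated inequality is out of reach along your route.

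The missing step is the projection trick in~(\ref{equation_W_x}): write
\[
W(\mathbf{x}(k)-\mathbf{x}(k-1))=\Big(W-\tfrac{1}{n}\mathbf{1}\mathbf{1}^{T}\Big)(\mathbf{x}(k)-\mathbf{x}(k-1))-\tfrac{\alpha_{k}}{n}\mathbf{1}\mathbf{1}^{T}\nabla F(\mathbf{z}(k))+\tfrac{1}{n}\mathbf{1}\mathbf{1}^{T}\bm{\xi}(k),
\]
using~(\ref{equation_bar_x}) for the consensus component. The first piece is what introduces $\sigma$ (via $\|W-\tfrac{1}{n}\mathbf{1}\mathbf{1}^{T}\|\le\sigma$) and, after the Young steps with parameter $\tfrac{1-\sigma^{2}}{4\sigma}$, produces exactly $\frac{3+\sigma^{2}}{4}$ and the common amplification $\frac{3+\sigma^{2}}{1-\sigma^{2}}$. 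The second piece, $\tfrac{\alpha_{k+1}}{n}\mathbf{1}\mathbf{1}^{T}\nabla F(\mathbf{z}(k))$, is then bounded separately (see~(\ref{ieq:nabla_F_k})) and is the actual source of the $\|\bar{\bm{x}}(k)-\bm{x}^{*}\|^{2}$ term via Lemma~\ref{lemma_smooth_convex} --- not the gradient-difference summand $\bm{c}_{3}$ as you suggest. In short, you applied~(\ref{equation_bar_x}) to the wrong piece: it must be used on $\bm{a}_{3}$, not on $\mathbf{z}(k+1)-\mathbf{z}(k)$, to get the $\sigma$-dependent constants claimed in the lemma.
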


Let $s_{1}(k) \triangleq \mathbb{E}\left[\|\bar{\bm{x}}(k)-\bm{x}^*\|^{2}\right]$, $s_{2}(k) \triangleq \mathbb{E}\left[\|\mathbf{x}(k)-\mathbf{1}\bar{\bm{x}}(k)\|^{2}\right]$, and $s_{3}(k) \triangleq \mathbb{E}\left[\|\mathbf{x}(k)-\mathbf{x}(k-1)\|^{2}\right]$. In light of Lemmas \ref{lemma:first_inequality}, \ref{lemma:second_inequality}, and \ref{lemma:third_inequality}, we have
\begin{equation}\label{equation_linear_inequality}
\bm{s}(k+1) \le A_{k}\bm{s}(k) + \bm{b}_{k},
\end{equation}
where $\bm{s}(k)$, $A_{k}$, and $\bm{b}_{k}$ are defined in (\ref{eq:Ak}) and (\ref{eq:bk}).

\begin{figure*}[!t]
\centering
\begin{equation}\label{eq:Ak}
\bm{s}(k) \triangleq
\left[ \begin{matrix}
s_{1}(k) \\
s_{2}(k) \\
s_{3}(k)
\end{matrix} \right],\ 
A_{k} \triangleq 
\left[ \begin{matrix}
1-\alpha_{k+1}\mu & \frac{L^{2}\alpha_{k+1}}{n\mu} & 0\\
\frac{4n(2+\sigma^{2})L^{2}\alpha_{k+1}^{2}}{1-\sigma^{2}} & \frac{2+\sigma^{2}}{3}+\frac{4(2+\sigma^{2})L^{2}\alpha_{k+1}^{2}}{1-\sigma^{2}} & \frac{(2+\sigma^{2})(q_{1}^{2}+2L^{2}\alpha_{k+1}^{2})}{1-\sigma^{2}} \\
\frac{4n(3+\sigma^{2})L^{2}\alpha_{k+1}^{2}}{1-\sigma^{2}} & \frac{(3+\sigma^{2})(\|W-I\|^{2} + 4L^{2}\alpha_{k+1}^{2})}{1-\sigma^{2}} & \frac{3+\sigma^{2}}{4} + \frac{3(3+\sigma^{2})L^{2}\alpha_{k+1}^{2}}{1-\sigma^{2}}
\end{matrix} \right],
\end{equation}
\begin{equation}\label{eq:bk}
\bm{b}_{k} \triangleq
\left[ \begin{matrix}
2p(1+2L\alpha_{k+1}+\frac{L^{2}\alpha_{k+1}}{\mu})\nu_{k+1}^{2}\\
2np\big(9+6L\alpha_{k+1}+\frac{18L^{2}\alpha_{k+1}^{2}}{1-\sigma^{2}}\big)\nu_{k+1}^{2} + 2np\big(15+6L\alpha_{k}+4L\alpha_{k+1}+npL\alpha_{k+1}+2L^{2}\alpha_{k}\alpha_{k+1}+\frac{6L^{2}(7+2L\alpha_{k})\alpha_{k+1}^{2}}{1-\sigma^{2}}\big)\nu_{k}^{2} \\
2np\big(9+6L\alpha_{k+1}+\frac{28L^{2}\alpha_{k+1}^{2}}{1-\sigma^{2}}\big)\nu_{k+1}^{2} + 4np\big(19+6L\alpha_{k}+5L\alpha_{k+1}+npL\alpha_{k+1}+2L^{2}\alpha_{k}\alpha_{k+1}+\frac{6L^{2}(7+2L\alpha_{k})\alpha_{k+1}^{2}}{1-\sigma^{2}}\big)\nu_{k}^{2}
\end{matrix} \right].
\end{equation}
\end{figure*}

We now present the convergence result and optimization accuracy of our proposed algorithm in the following theorem.
\begin{theorem}\label{theorem:convergence}
Consider the distributed optimization problem (\ref{problem}) with Assumptions \ref{assump:smooth_convex}, \ref{assump:W}, \ref{assumption_noise}. If the stepsize decay parameter $q_{1}$ in (\ref{eq_alpha_nu}) satisfy $q_{1}^{2} \le \frac{(1-\sigma^{2})^{4}}{48(\theta+1)(2+\sigma^{2})(3+\sigma^{2})\|W-I\|^{2}}$, where $\sigma < 1$ is defined in Lemma \ref{lemma:second_inequality} and $\theta > 1$ is a constant, then the sequence $\{\mathbf{x}(k)\}$ generated by Algorithm \ref{alg:alg_private} converges, i.e., $\lim_{k\to\infty}\mathbb{E}\left[\left\|\mathbf{x}(k)-\mathbf{1}\bar{\bm{x}}(k)\right\|^{2}\right] = 0$. The optimization accuracy $d \triangleq \lim_{k\to\infty}\mathbb{E}\left[\left\|\bar{\bm{x}}(k)-\bm{x}^*\right\|^{2}\right]$ is bounded by
	\begin{equation}\label{eq:accuracy_bound}
	\begin{aligned}
	&e^{-\frac{\mu\gamma}{1-q_{1}}}c_{1} + \frac{\gamma c_{2}L^{2}}{n\mu(1-q_{1})} + \frac{2p\delta^{2}\gamma^{2}q_{2}^{2}}{\epsilon^{2}(q_{2}-q_{1})^{2}(1-q_{2}^{2})} \\
	&\quad + \Big(4pL + \frac{2pL^{2}}{\mu}\Big)\frac{\delta^{2}q_{2}^{2}\gamma^{3}}{\epsilon^{2}(q_{2}-q_{1})^{2}(1-q_{1}q_{2}^{2})},
	\end{aligned}
	\end{equation}
where $c_{1},c_{2}>0$ are constants.
\end{theorem}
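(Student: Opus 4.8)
The plan is to treat the vector recursion $\bm s(k+1)\le A_k\bm s(k)+\bm b_k$ from (\ref{equation_linear_inequality}) as a non-autonomous linear system and exploit that, under the parameter choice (\ref{eq_alpha_nu}), $\alpha_{k+1}=\gamma q_1^{k}\downarrow 0$ and the entries of $\bm b_k$ are $O(q_2^{2k})\downarrow 0$. Hence $A_k$ tends to a limiting matrix whose $(1,1)$ entry equals $1$ but whose bottom-right $2\times2$ block is a strict contraction — this is exactly what the hypothesis on $q_1$ buys, and why $s_1(k)=\mathbb E[\|\bar{\bm x}(k)-\bm x^*\|^2]$ converges to a positive quantity while $s_2(k),s_3(k)$ vanish. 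Concretely, writing $B_k$ for the submatrix of $A_k$ on rows/columns $2,3$ (cf.\ (\ref{eq:Ak})), $B_k$ is nonnegative and irreducible with diagonal entries $\tfrac{2+\sigma^2}{3}+O(\alpha_{k+1}^2)$ and $\tfrac{3+\sigma^2}{4}+O(\alpha_{k+1}^2)$, both below $1$ for large $k$ since $\sigma<1$, and
\[
\det(I-B_k)=\frac{(1-\sigma^2)^2}{12}-\frac{(2+\sigma^2)(3+\sigma^2)q_1^2\|W-I\|^2}{(1-\sigma^2)^2}+O(\alpha_{k+1}^2).
\]
Substituting the bound on $q_1^2$ (with $\theta>1$) shows the leading part is at least $\tfrac{(1-\sigma^2)^2}{12}\cdot\tfrac{4\theta+3}{4\theta+4}>0$, so $\det(I-B_k)>0$ for all $k\ge K_0$; Lemma~\ref{lemma_rho} (case $n=2$) then gives $\rho(B_k)<1$, and by continuity $\bar\rho\triangleq\sup_{k\ge K_0}\rho(B_k)<1$.

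Next I would prove that $\bm s(k)$ is bounded by exhibiting a fixed positive vector $\bm w=(w_1,w_2,w_3)^\top$ with $A_k\bm w\le\bm w$ for all large $k$. The calibration of $q_1$ makes the interval $\big[\tfrac{4(3+\sigma^2)\|W-I\|^2}{(1-\sigma^2)^2},\ \tfrac{(1-\sigma^2)^2}{3(2+\sigma^2)q_1^2}\big)$ nonempty; choosing $w_3/w_2$ strictly inside it makes rows $2$ and $3$ of the limiting inequality $A_\infty\bm w<\bm w$ hold with a strict margin, while row $1$ only needs $w_1\ge\tfrac{L^2w_2}{n\mu^2}$ and then holds for every $\alpha_{k+1}>0$. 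Since all $\alpha_{k+1}$- and $\alpha_{k+1}^2$-dependent corrections — including the terms $(A_k)_{21}w_1,(A_k)_{31}w_1$, which vanish because $w_1$ is a fixed constant — tend to $0$, the inequality $A_k\bm w\le\bm w$ persists for $k\ge K_1$. Working in the weighted max-norm $\|\bm v\|_{\bm w}\triangleq\max_i v_i/w_i$, nonnegativity of $A_k$ gives $\|\bm s(k+1)\|_{\bm w}\le\|\bm s(k)\|_{\bm w}+\|\bm b_k\|_{\bm w}$ for $k\ge K_1$; since $\|\bm b_k\|_{\bm w}=O(q_2^{2k})$ is summable (the prefactors in (\ref{eq:bk}) are bounded because $\alpha_k\le\gamma$), we conclude $\sup_k\|\bm s(k)\|_{\bm w}<\infty$, and in particular $c_2\triangleq\sup_k s_2(k)<\infty$.

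With boundedness established I would obtain the convergence claim from the $2\times2$ subsystem $\bm r(k+1)\le B_k\bm r(k)+\bm e_k$, where $\bm r=(s_2,s_3)^\top$ and $\bm e_k$ collects the entries $(A_k)_{21}s_1(k),(A_k)_{31}s_1(k)$ (each $O(\alpha_{k+1}^2)$ times a bounded quantity) together with the noise components of $\bm b_k$; thus $\bm e_k\to 0$. Using the strict contraction property $B_k(w_2,w_3)^\top\le\rho'(w_2,w_3)^\top$ with $\rho'<1$ built above, the scalars $c_k$ with $\bm r(k)\le c_k(w_2,w_3)^\top$ obey $c_{k+1}\le\rho' c_k+\|\bm e_k\|_{(w_2,w_3)}$, so $\limsup_k c_k\le \limsup_k\|\bm e_k\|_{(w_2,w_3)}/(1-\rho')=0$; hence $\lim_k\mathbb E[\|\mathbf x(k)-\mathbf1\bar{\bm x}(k)\|^2]=0$. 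For the accuracy bound I would unroll the scalar first-row inequality of Lemma~\ref{lemma:first_inequality} from $k=0$:
\[
s_1(k)\le\Big(\prod_{l=1}^{k}(1-\alpha_l\mu)\Big)s_1(0)+\sum_{j=0}^{k-1}\Big(\prod_{l=j+2}^{k}(1-\alpha_l\mu)\Big)\Big(\tfrac{L^2\alpha_{j+1}}{n\mu}s_2(j)+2p\big(1+2L\alpha_{j+1}+\tfrac{L^2\alpha_{j+1}}{\mu}\big)\nu_{j+1}^2\Big),
\]
bound every partial product in the sum by $1$ and the full product by $\exp\!\big(-\mu\sum_{l\ge1}\alpha_l\big)=e^{-\mu\gamma/(1-q_1)}$, use $\sum_{j\ge0}\alpha_{j+1}s_2(j)\le c_2\tfrac{\gamma}{1-q_1}$, and evaluate the two geometric series $\sum_{j\ge0}\nu_{j+1}^2=\tfrac{\gamma^2\delta^2q_2^2}{\epsilon^2(q_2-q_1)^2(1-q_2^2)}$ and $\sum_{j\ge0}\alpha_{j+1}\nu_{j+1}^2=\tfrac{\gamma^3\delta^2q_2^2}{\epsilon^2(q_2-q_1)^2(1-q_1q_2^2)}$; taking $\limsup_k$ and collecting terms reproduces (\ref{eq:accuracy_bound}) with $c_1=\|\bar{\bm x}(0)-\bm x^*\|^2$.

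I expect the main obstacle to be the spectral/algebraic bookkeeping of the first two steps: making the $O(\alpha_{k+1})$ and $O(\alpha_{k+1}^2)$ perturbation orders in the entries of $A_k$ precise, verifying that the stated bound on $q_1$ (with the free constant $\theta>1$) is exactly what is needed both to force $\det(I-B_\infty)>0$ with a quantified margin and to leave enough room for the vanishing corrections when constructing $\bm w$, and confirming that $A_k$ is genuinely nonexpansive in a single fixed weighted norm for all large $k$. One must also check that Lemma~\ref{lemma:first_inequality} is applicable, which requires $\alpha_{k+1}\le\tfrac{2}{\mu+L}$ — guaranteed since $\alpha_k\le\gamma$ is taken small enough by the parameter tuning. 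Once the uniform contraction of the $(s_2,s_3)$ block is secured, the convergence statement and the accuracy bound follow by routine unrolling and summation of geometric series.
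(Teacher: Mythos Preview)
Your overall architecture matches the paper's: both start from the vector recursion $\bm s(k+1)\le A_k\bm s(k)+\bm b_k$, establish boundedness of $\bm s$, pass to the $2\times2$ subsystem to show $s_2,s_3\to 0$, and then unroll the scalar row-1 inequality to obtain (\ref{eq:accuracy_bound}). The final geometric sums and the use of $1-x\le e^{-x}$ are identical.

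The genuine difference is in how boundedness is obtained. The paper does \emph{not} work with the $2\times2$ block at this stage; instead it shows $\rho(A_k)<1$ for the full $3\times3$ matrix (for all finite $k$, after a time shift) by applying Lemma~\ref{lemma_rho} with $n=3$. That is where the constant $\theta$ really comes from in the paper: it enters through the three product inequalities (\ref{seq_phi_1221})--(\ref{seq_phi_122331}), which in turn yield the bound (\ref{eq_q1_2}) on $q_1$ together with several auxiliary bounds on $\alpha_{k+1}$. The paper then invokes the Horn--Johnson lemma to assert the existence of an induced norm with $\|A_k\|_s\le 1$ for all $k$ and unrolls. For the convergence step, the paper absorbs the $O(\alpha_{k+1}^2)$ corrections into $\tilde{\bm b}_k$ and works with the \emph{constant} limiting matrix $\tilde A=B_\infty$ rather than the time-varying $B_k$.

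Your route is more elementary and arguably cleaner: you never analyze $\rho(A_k)$ for the full matrix, you avoid the three conditions (\ref{seq_phi}) and the induced-norm lemma altogether, and you build the contraction directly via an explicit weighted vector $\bm w$ (the row-1 calibration $w_1\ge L^2w_2/(n\mu^2)$ and the nonempty interval for $w_3/w_2$ are exactly the right checks). The price is that the role of $\theta$ in your argument is merely to give margin in $\det(I-B_\infty)$, whereas in the paper $\theta$ parametrizes the full $3\times3$ determinant estimate. One small wording fix: the condition $\alpha_{k+1}\le 2/(\mu+L)$ needed for Lemma~\ref{lemma:first_inequality} is not guaranteed by ``$\gamma$ small enough'' (the theorem does not assume this); like the paper, you should handle it by the time shift $k\mapsto k-K$, so that $c_1$ in the final bound is the value $s_1(K)$ rather than $\|\bar{\bm x}(0)-\bm x^*\|^2$.
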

\begin{proof}
We first prove that under appropriate stepsize parameters, $\rho(A_{k})\le1$ for any $k\ge1$. Then, we prove the boundedness of $\bm{s}(k)$ and finally obtain the desired results. Considering that $\alpha_{k}\to0$ as $k\to\infty$, we analyze the spectral radius of $A_{k}$ both for $k<\infty$ and $k\to\infty$, respectively.

For the case of $k<\infty$, we demonstrate that $\rho(A_{k})$ is strictly less than 1. By Lemma \ref{lemma_rho}, it suffices to ensure that $[A_{k}]_{11}, [A_{k}]_{22}, [A_{k}]_{33}<1$ and $\det(I-A_{k})>0$. If $\alpha_{k+1} \le \frac{2}{\mu + L}$, then $0 \le [A_{k}]_{11} = 1-\alpha_{k+1}\mu < 1$. To ensure $[A_{k}]_{22}<1$ and $[A_{k}]_{33}<1$, it is sufficient that $\alpha_{k+1}^{2} < \frac{(1-\sigma^{2})^{2}}{12(3+\sigma^{2})L^{2}}$. To satisfy the more stringent condition $\det(I-A_{k})>0$, a tighter bound on $\alpha_{k+1}$ is required. If $\alpha_{k+1}$ further satisfies
	\begin{equation}\label{eq_alpha_11}
	\begin{aligned}
	\alpha_{k+1}^{2} &\le \frac{(1-\sigma^{2})^{2}}{24(3+\sigma^{2})L^{2}},
	\end{aligned}
	\end{equation}
then $1-[A_{k}]_{22} \ge \frac{1-\sigma^{2}}{6}$ and $1-[A_{k}]_{33} \ge \frac{1-\sigma^{2}}{8}$. Suppose $\alpha_{k+1}$ also satisfies the following relations:
	\begin{subequations}\label{seq_phi}
	\begin{align}
	&\quad [A_{k}]_{12}[A_{k}]_{21} \le \frac{1}{\theta+1}\left(1-[A_{k}]_{11}\right)\left(1-[A_{k}]_{22}\right), \label{seq_phi_1221}\\
	&\quad [A_{k}]_{23}[A_{k}]_{32} \le \frac{1}{\theta+1}\left(1-[A_{k}]_{22}\right)\left(1-[A_{k}]_{33}\right), \label{seq_phi_2332}\\
	&\quad [A_{k}]_{12}[A_{k}]_{23}[A_{k}]_{31} \nonumber\\
	&\le \frac{1}{\theta+2}\big(\left(1-[A_{k}]_{11}\right)\left(1-[A_{k}]_{22}\right)\left(1-[A_{k}]_{33}\right) \label{seq_phi_122331}\\
	&\quad - [A_{k}]_{12}[A_{k}]_{21}(1-[A_{k}]_{33}) - [A_{k}]_{23}[A_{k}]_{32}(1-[A_{k}]_{11})\big), \nonumber
	\end{align}
	\end{subequations}
where $\theta > 1$. We then have
	\begin{equation}
	\begin{aligned}
	&\quad \det(I-A_{k}) \\
	&= (1-[A_{k}]_{11})(1-[A_{k}]_{22})(1-[A_{k}]_{33}) \\
	&\quad- [A_{k}]_{12}[A_{k}]_{21}(1-[A_{k}]_{33}) - [A_{k}]_{23}[A_{k}]_{32}(1-[A_{k}]_{11}) \\
	&\quad - [A_{k}]_{12}[A_{k}]_{23}[A_{k}]_{31} \\
	&\ge \frac{\theta+1}{\theta+2}\big((1-[A_{k}]_{11})(1-[A_{k}]_{22})(1-[A_{k}]_{33}) \\
	&\qquad - [A_{k}]_{12}[A_{k}]_{21}(1-[A_{k}]_{33}) - [A_{k}]_{23}[A_{k}]_{32}(1-[A_{k}]_{11})\big) \\
	&\ge \frac{\theta-1}{\theta+2}(1-[A_{k}]_{11})(1-[A_{k}]_{22})(1-[A_{k}]_{33}) \\
	&> 0.
	\end{aligned}
	\end{equation}
Therefore, we obtain $\rho(A_{k}) < 1$ for any $1\le k<\infty$.

For the case of $k \to \infty$,
	\begin{equation}\label{equation_A_infty}
A_{\infty} = 
\left[ \begin{matrix}
1 & 0 \\
0 & \tilde{A}
	\end{matrix} \right],
	\tilde{A} = 
\left[ \begin{matrix}
\frac{2+\sigma^{2}}{3} & \frac{(2+\sigma^{2})q_{1}^{2}}{1-\sigma^{2}} \\
\frac{(3+\sigma^{2})\|W-I\|^{2}}{1-\sigma^{2}} & \frac{3+\sigma^{2}}{4}
	\end{matrix} \right].
	\end{equation}
If $q_{1}$ satisfies 
	\begin{equation}\label{eq_q1_1}
	\begin{aligned}
	q_{1}^{2} < \frac{(1-\sigma^{2})^{4}}{12(2+\sigma^{2})(3+\sigma^{2})\|W-I\|^{2}},
	\end{aligned}
	\end{equation}
then $\det(I-\tilde{A}) = (1-\tilde{A}_{11})(1-\tilde{A}_{22}) - \tilde{A}_{12}\tilde{A}_{21} > 0$. In light of Lemma \ref{lemma_rho}, we have $\rho(\tilde{A})<1$.

We now show that $\bm{s}(k)$ is bounded for all $k\ge1$. By Lemma 5.6.10 in \cite{horn2012matrix}, for any $\varepsilon>0$, there exists an induced matrix norm $\|\cdot\|_{s}$ such that
	\begin{equation}\label{eq:induced_norm}
	\|A_{k}\|_{s}\le\left\{
\begin{aligned}
&\rho(A_{k}) + \varepsilon,\ \ k<\infty, \\
&\max\{1, \rho(\tilde{A})+\varepsilon\} ,\ \ k\to\infty.
\end{aligned}
\right.
	\end{equation}
Since $\rho(A_{k}) < 1$ for $k<\infty$ and $\rho(\tilde{A}) < 1$, we can choose $\varepsilon$ small enough such that $\|A_{k}\|_{s} \le 1$ for all $k\ge1$. Then, by recursively invoking (\ref{equation_linear_inequality}), it follows that
	\begin{equation}\label{equation_s_bounded}
	\begin{aligned}
	\|\bm{s}(k+1)\|_{s} &\le \|A_{k}A_{k-1}\cdots A_{1}\bm{s}(1)\|_{s} + \sum_{i=1}^{k}\Big\|\Big(\prod_{j=i+1}^{k}A_{j}\Big)\bm{b}_{i}\Big\|_{s} \\
	&\le \|\bm{s}(1)\|_{s} + \sum_{i=1}^{\infty}\|\bm{b}_{i}\|_{s}.
	\end{aligned}
	\end{equation}
Since $\sum_{i=1}^{\infty}[\bm{b}_{i}]_{j} < \infty$ for $j = 1,2,3$ and by the equivalence of norms, we have $\sum_{i=1}^{\infty}\|\bm{b}_{i}\|_{s} <\infty$. Thus, $\bm{s}(k)$ is bounded for all $k\ge1$.  

Let $s_{1}(k)\le c_{1}$, $s_{2}(k) \le c_{2}$, and $s_{3}(k) \le c_{3}$, where $c_{1}, c_{2}, c_{3}>0$ are some constants. Substituting these bounds into (\ref{equation_linear_inequality}), we obtain
	\begin{subequations}
	\begin{align}
	s_{1}(k+1) &\le (1-\alpha_{k+1}\mu)s_{1}(k) + c_{2}[A_{k}]_{12} + [\bm{b}_{k}]_{1}, \label{ineq_s1}\\
	\tilde{\bm{s}}(k+1) &\le \tilde{A}\tilde{\bm{s}}(k) + \tilde{\bm{b}}_{k},
	\end{align}
	\end{subequations}
where $\tilde{\bm{s}}(k) \triangleq \left[s_{2}(k), s_{3}(k)\right]^{T}$, $\tilde{A}$ is defined as in (\ref{equation_A_infty}), and $[\tilde{\bm{b}}_{k}]_{1} \triangleq \frac{2(2+\sigma^{2})(2nc_{1}+2c_{2}+c_{3})L^{2}\alpha_{k+1}^{2}}{1-\sigma^{2}} + [\bm{b}_{k}]_{2}$, $[\tilde{\bm{b}}_{k}]_{2} \triangleq \frac{(3+\sigma^{2})(4nc_{1}+4c_{2}+3c_{3})L^{2}\alpha_{k+1}^{2}}{1-\sigma^{2}} + [\bm{b}_{k}]_{3}$.


Since $\rho(\tilde{A})<1$, it follows that $\tilde{\bm{s}}(k)$ converges and satisfies $\tilde{\bm{s}}(\infty) \le (I-\tilde{A})^{-1}\tilde{\bm{b}}_{\infty}$. Given that $\tilde{\bm{b}}_{\infty} = 0$, it follows that $\lim_{k\to\infty}\mathbb{E}\left[\|\mathbf{x}(k)-\mathbf{1}\bar{\bm{x}}(k)\|^{2}\right] = 0$. For the optimization accuracy, by (\ref{ineq_s1}) and the parameter settings in (\ref{eq_alpha_nu}), we obtain
	\begin{equation}
	\begin{aligned}
	&\quad \lim_{k\to\infty}\mathbb{E}\left[\left\|\bar{\bm{x}}(k+1)-\bm{x}^*\right\|^{2}\right] \\
	&\le \prod_{i=0}^{\infty}(1-\alpha_{i+1}\mu)s_{1}(0) + \sum_{i=0}^{\infty}\prod_{j=i+1}^{\infty}(1-\alpha_{j+1}\mu)\Big(\frac{c_{2}L^{2}\alpha_{i+1}}{n\mu} \\
	&\qquad + 2p(1+2L\alpha_{i+1}+\frac{L^{2}\alpha_{i+1}}{\mu})\nu_{i+1}^{2}\Big) \\
	&\le e^{-\mu\sum_{i=1}^{\infty}\alpha_{i}}c_{1} + \frac{c_{2}L^{2}}{n\mu}\sum_{i=1}^{\infty}\alpha_{i} + 2p\sum_{i=1}^{\infty}\nu_{i}^{2} \\
	&\quad + \Big(4pL + \frac{2pL^{2}}{\mu}\Big)\sum_{i=1}^{\infty}\alpha_{i}\nu_{i}^{2} \\
	&\le e^{-\frac{\mu\gamma}{1-q_{1}}}c_{1} + \frac{\gamma c_{2}L^{2}}{n\mu(1-q_{1})} + \frac{2p\delta^{2}\gamma^{2}q_{2}^{2}}{\epsilon^{2}(q_{2}-q_{1})^{2}(1-q_{2}^{2})} \\
	&\quad + \Big(4pL + \frac{2pL^{2}}{\mu}\Big)\frac{\delta^{2}q_{2}^{2}\gamma^{3}}{\epsilon^{2}(q_{2}-q_{1})^{2}(1-q_{1}q_{2}^{2})},
	\end{aligned}
	\end{equation}
where the second inequality holds due to $1-x \le e^{-x}$ for all $x\in\mathbb{R}$. 

It remains to show that (\ref{seq_phi}) holds.
From (\ref{seq_phi_1221}), we require
	\begin{equation}\label{eq_alpha_22}
	\alpha_{k+1}^{2} \le \frac{\mu^{2}(1-\sigma^{2})^{2}}{24(\theta+1)(2+\sigma^{2})L^{4}}.
	\end{equation}
From (\ref{seq_phi_2332}), we need
	\begin{equation}
	\begin{aligned}
	&\quad \frac{(2+\sigma^{2})(3+\sigma^{2})(q_{1}^{2}+2L^{2}\alpha_{k+1}^{2})(\|W-I\|^{2}+4L^{2}\alpha_{k+1}^{2})}{(1-\sigma^{2})^{2}} \\
	&\le \frac{(1-\sigma^{2})^{2}}{48(\theta+1)},
	\end{aligned}
	\end{equation}
equivalently,
	\begin{align}
	&q_{1}^{2} \le \frac{(1-\sigma^{2})^{4}}{48(\theta+1)(2+\sigma^{2})(3+\sigma^{2})\|W-I\|^{2}}, \label{eq_q1_2}\\
	&\alpha_{k+1}^{2} \le \frac{-c_{4} + \sqrt{c_{4}^{2} + 32c_{5}L^{4}}}{16L^{4}}, \label{eq_alpha_33}
	\end{align}
where $c_{4} \triangleq 4L^{2}q_{1}^{2} + 2L^{2}\|W-I\|^{2}$ and $c_{5} \triangleq \frac{(1-\sigma^{2})^{4}}{48(\theta+1)(2+\sigma^{2})(3+\sigma^{2})} - q_{1}^{2}\|W-I\|^{2}$. From (\ref{seq_phi_122331}), we need 
	\begin{equation}
	\begin{aligned}
	&\quad \frac{4(2+\sigma^{2})(3+\sigma^{2})(q_{1}^{2}+2L^{2}\alpha_{k+1}^{2})L^{4}\alpha_{k+1}^{2}}{\mu(1-\sigma^{2})^{2}} \\
	&\le \frac{\mu(\theta-1)(1-\sigma^{2})^{2}}{48(\theta+1)(\theta+2)},
	\end{aligned}
	\end{equation}
which means that
	\begin{equation}\label{eq_alpha_44}
	\alpha_{k+1}^{2} \le \frac{(-1+\sqrt{1+c_{6}})q_{1}^{2}}{4L^{2}},
	\end{equation}
where $c_{6} \triangleq \frac{\mu^{2}(\theta-1)(1-\sigma^{2})^{4}}{24(\theta+1)(\theta+2)(2+\sigma^{2})(3+\sigma^{2})L^{2}q_{1}^{4}}$. In summary, the stepsize $\alpha_{k}$ must satisfy conditions (\ref{eq_alpha_11}), (\ref{eq_alpha_22}), (\ref{eq_alpha_33}), (\ref{eq_alpha_44}), and $\alpha_{k+1} \le \frac{2}{\mu + L}$. Since $\alpha_{k}$ decays exponentially, there always exists a $K$ such that $\alpha_{k}$ meets all the aforementioned conditions for all $k\ge K$. Let $k = k - K$. Considering (\ref{eq_q1_1}) and (\ref{eq_q1_2}), we complete the proof.
\end{proof}

In Theorem \ref{theorem:convergence}, we first establish the convergence of Algorithm \ref{alg:alg_private}, demonstrating that all agents eventually reach a consistent solution in the mean square sense. Subsequently, we derive a bound on the optimization accuracy of Algorithm \ref{alg:alg_private}. This accuracy bound depends on four parameters: $\gamma, q_{1}, q_{2}, \epsilon$. For fixed $\gamma, q_{1}, q_{2}$, the accuracy bound has the order of $\mathcal{O}(\frac{1}{\epsilon^{2}})$. It reveals the inherent trade-off between exact convergence and DP: the smaller the privacy budget $\epsilon$, the higher the privacy level, but the lower the accuracy.

\begin{remark}\label{remark: tuning}
The established bound (\ref{eq:accuracy_bound}) shows the trade-off between privacy and accuracy. Superior optimization accuracy can be achieved under a given privacy budget by solving the following optimization problem:
\begin{equation}\label{prob:accuracy}
\begin{aligned}
&\min_{\gamma, q_{1}, q_{2}} d(\gamma, q_{1}, q_{2}) \\
\text{subject to}\quad &\gamma>0, q_{1}\in(0,1), q_{2}\in(q_{1}, 1).
\end{aligned}
\end{equation}
Note that Problem (\ref{prob:accuracy}) is non-convex, which poses challenges for finding the global optimum. To address this issue, one possible approach is to utilize non-convex optimizers. Alternatively, a heuristic strategy can be employed to obtain a locally optimal solution: 1) Randomly initialize $\gamma$, $q_{1}$, and $q_{2}$ within their feasible domains. 2) Fix two parameters and optimize the remaining one to a local optimum. 3) Repeat Step 2) by iteratively optimizing different parameters several times.
\end{remark}

\section{Numerical Results}\label{section_simulation}
\begin{figure}[!t]
  \centerline{
  \includegraphics[width = \linewidth]{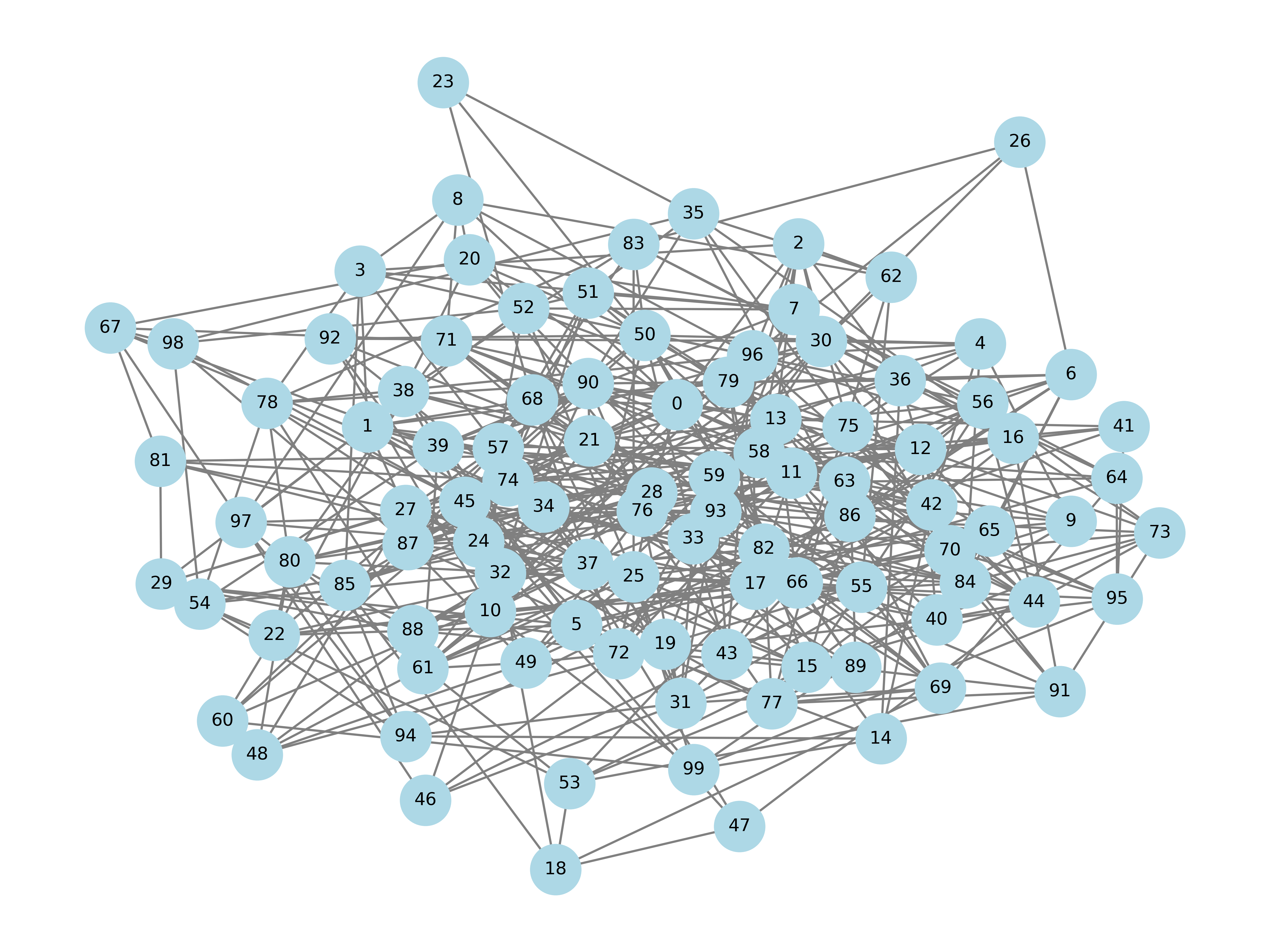}}
  \caption{The communication topology of 100 sensors.}
  \label{fig_graph}
\end{figure}

\begin{figure*}[!ht]
    \centering
    \begin{subfigure}{0.3\textwidth}
        \includegraphics[width=\linewidth]{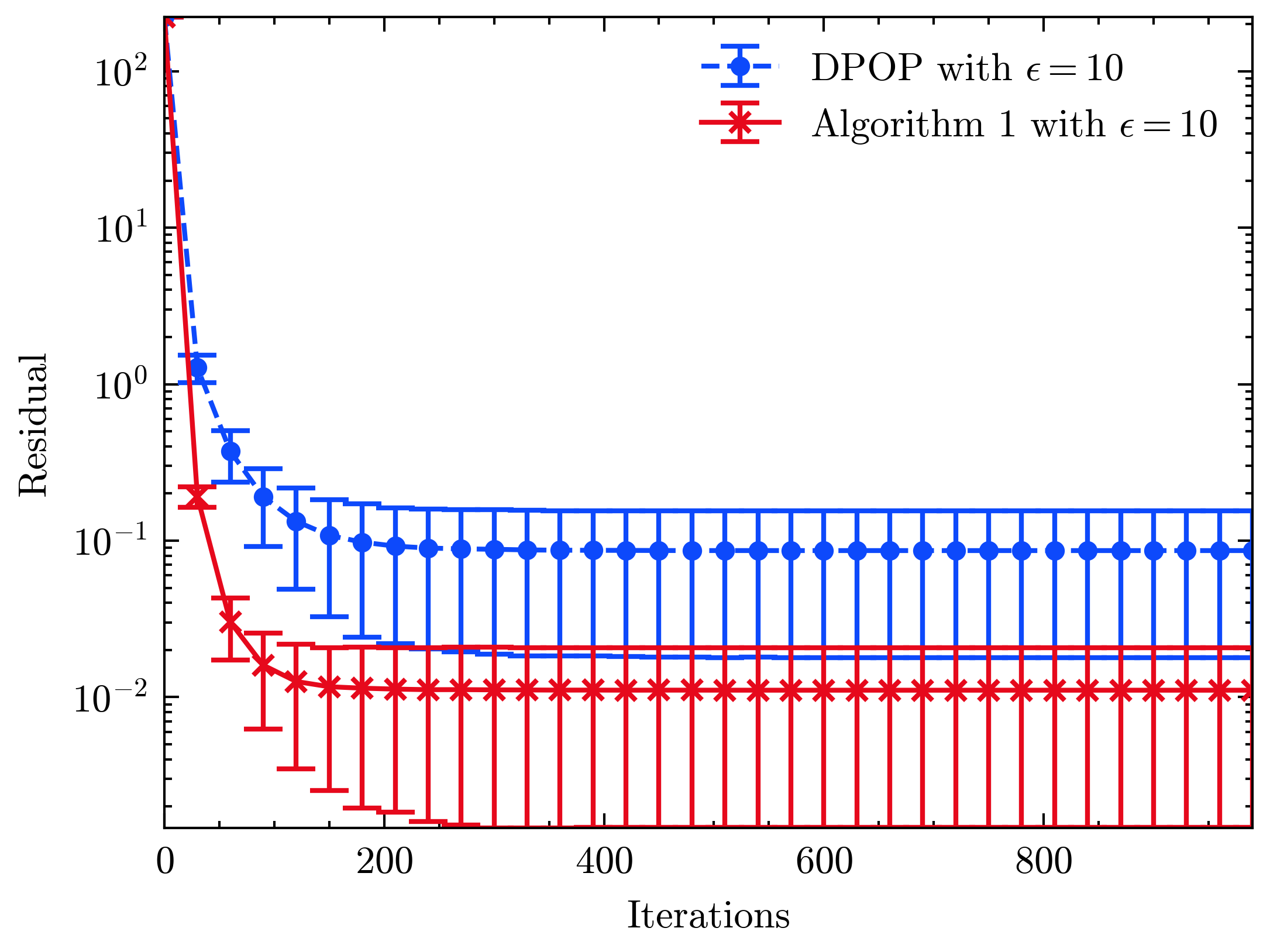}
    \end{subfigure}
    \hfill
    \begin{subfigure}{0.3\textwidth}
        \includegraphics[width=\linewidth]{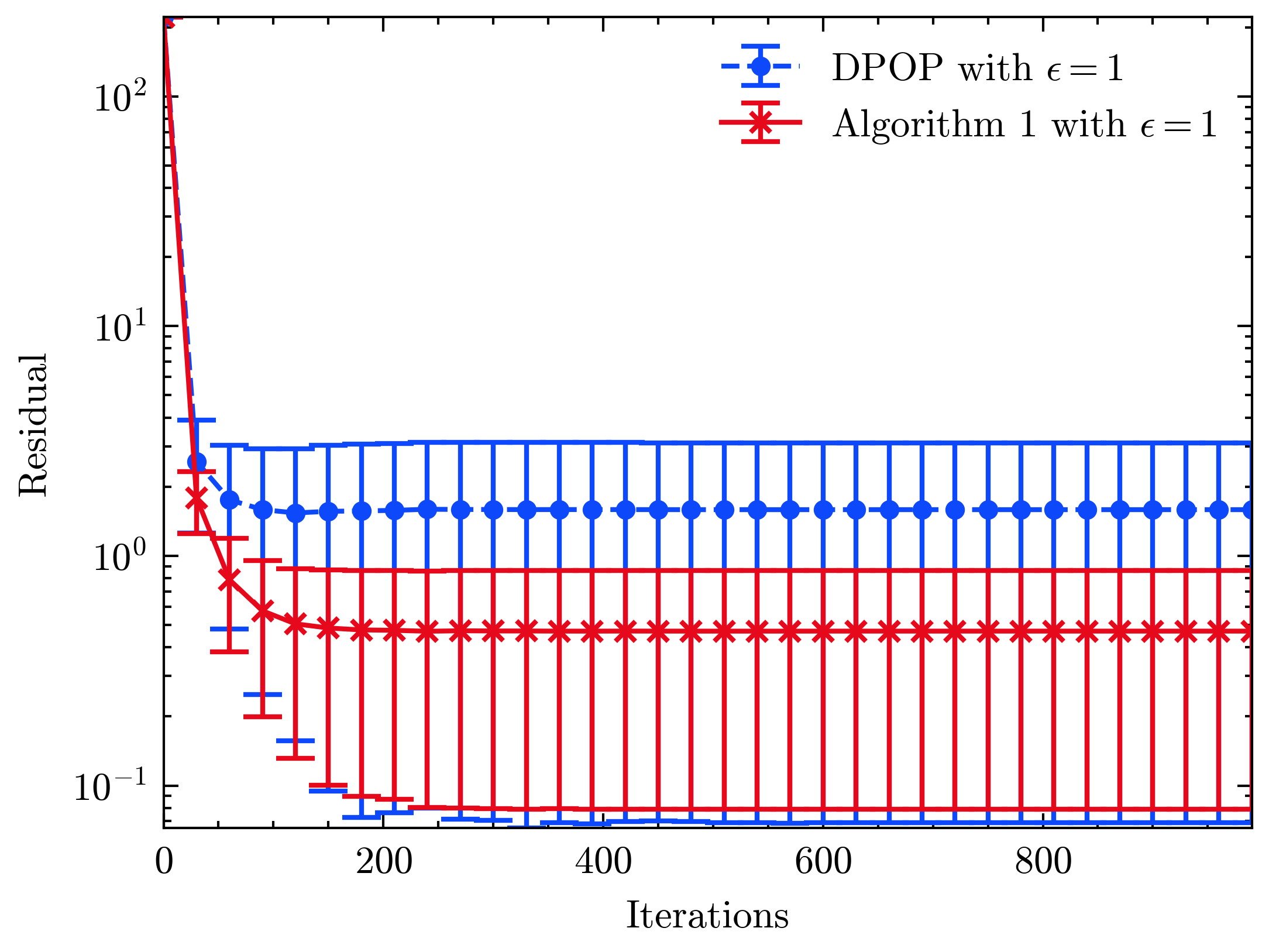}
    \end{subfigure}
    \hfill
    \begin{subfigure}{0.3\textwidth}
        \includegraphics[width=\linewidth]{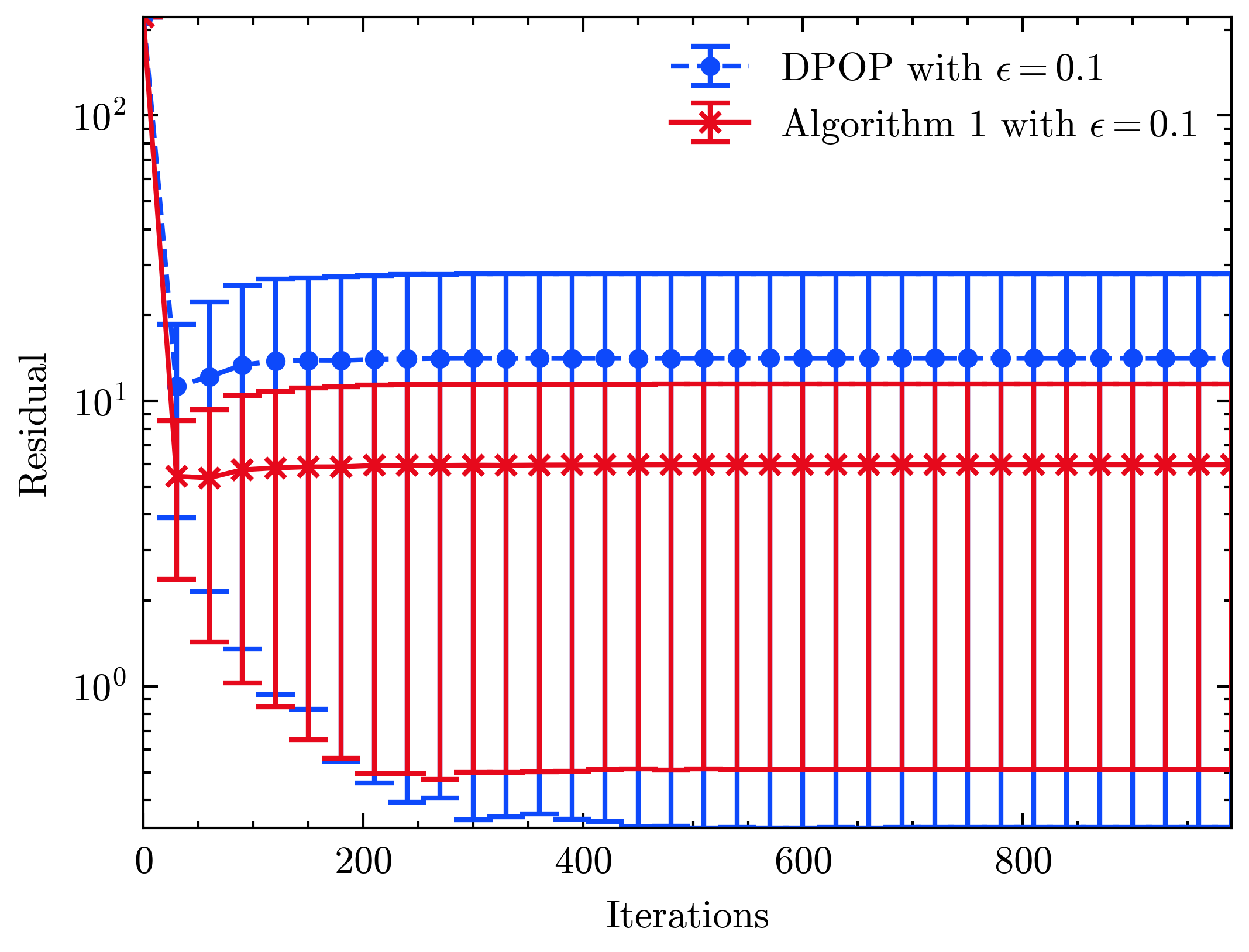}
    \end{subfigure}
    \caption{Comparison of convergence between Algorithm \ref{alg:alg_private} and the DPOP method \cite{huang2015differentially} under the same privacy budget in distributed estimation problems.}
    \label{fig_residual_dpop}
\end{figure*}

\begin{figure*}[!t]
    \centering
    \begin{subfigure}{0.3\textwidth}
        \includegraphics[width=\linewidth]{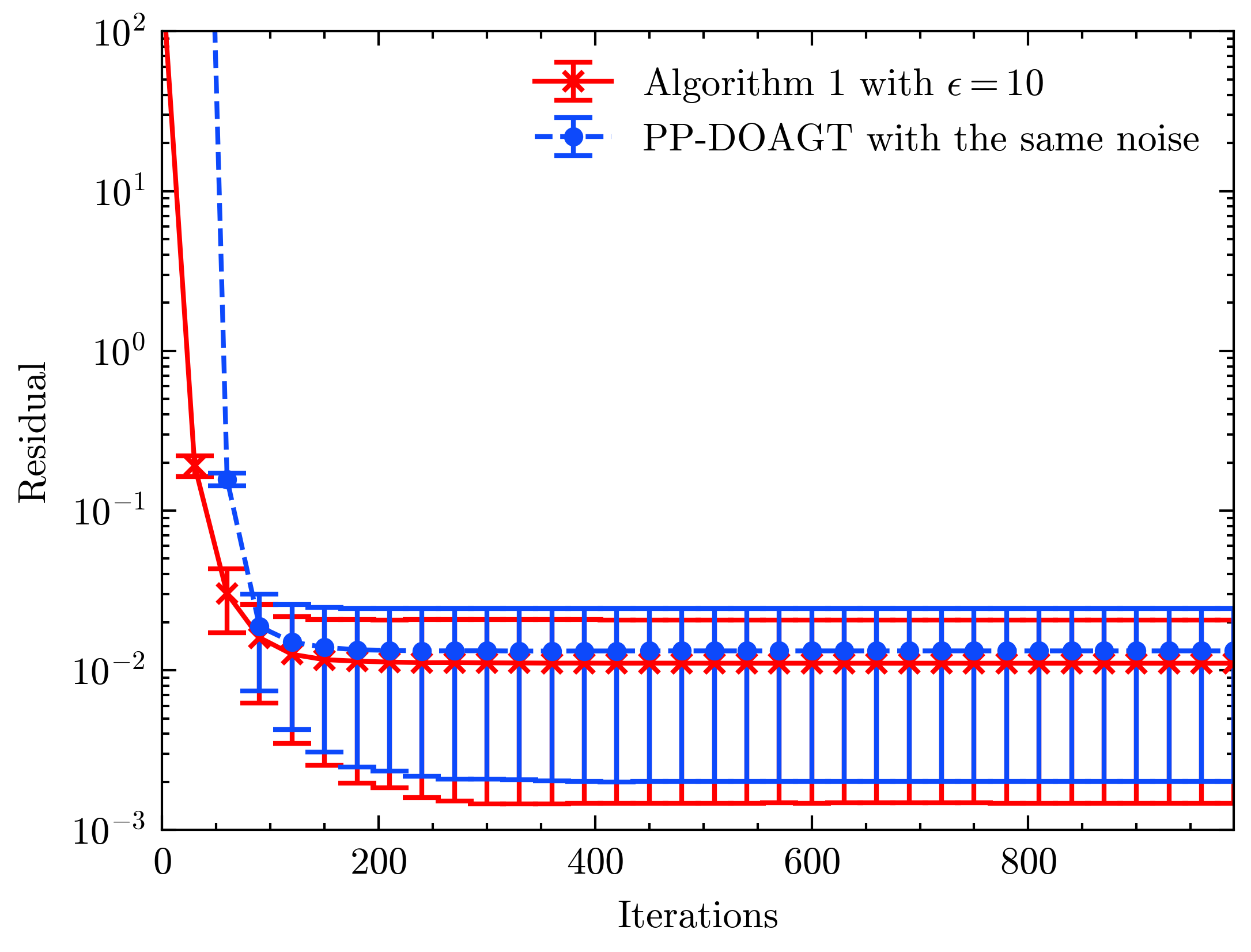}
    \end{subfigure}
    \hfill
    \begin{subfigure}{0.3\textwidth}
        \includegraphics[width=\linewidth]{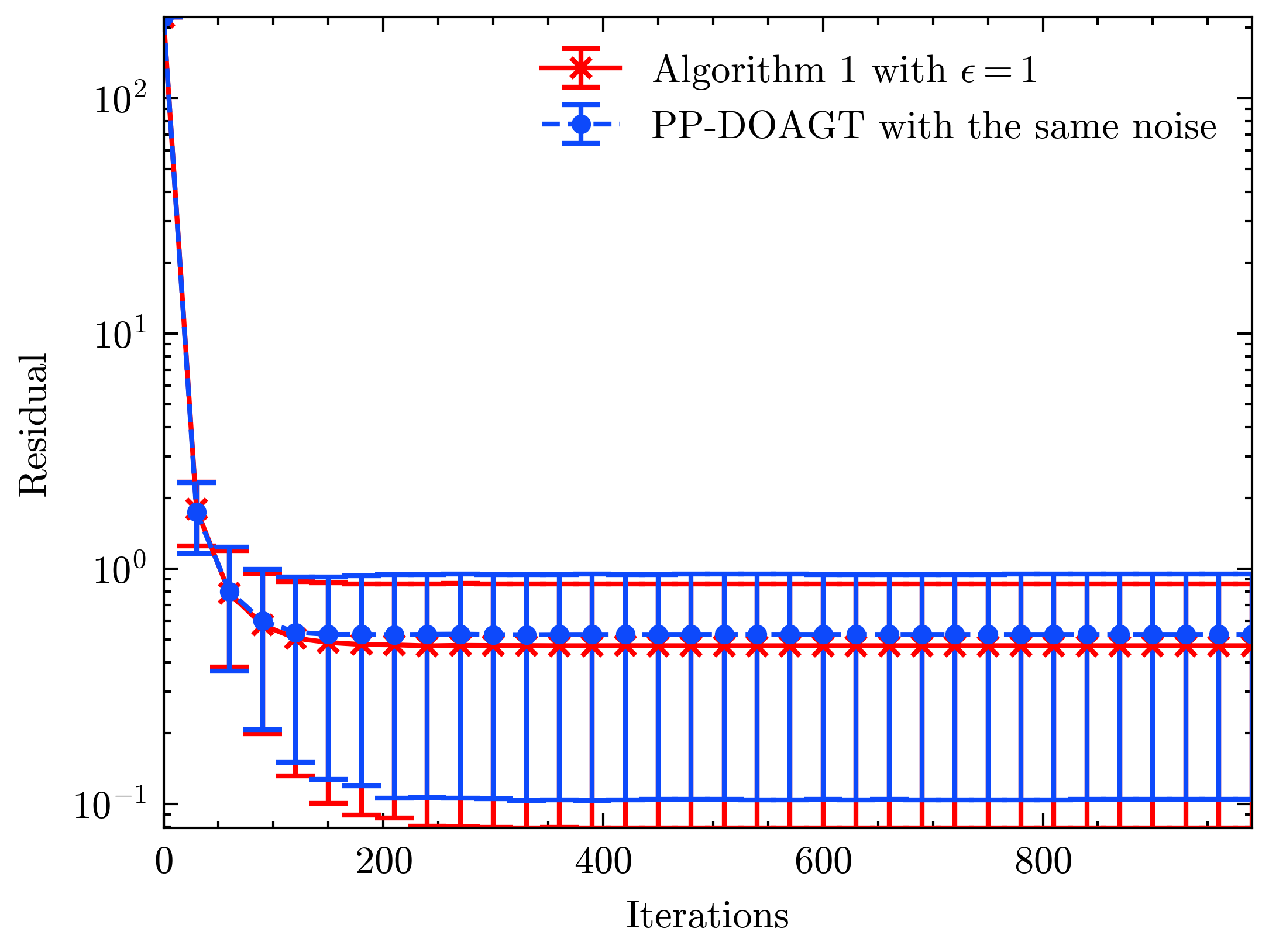}
    \end{subfigure}
    \hfill
    \begin{subfigure}{0.3\textwidth}
        \includegraphics[width=\linewidth]{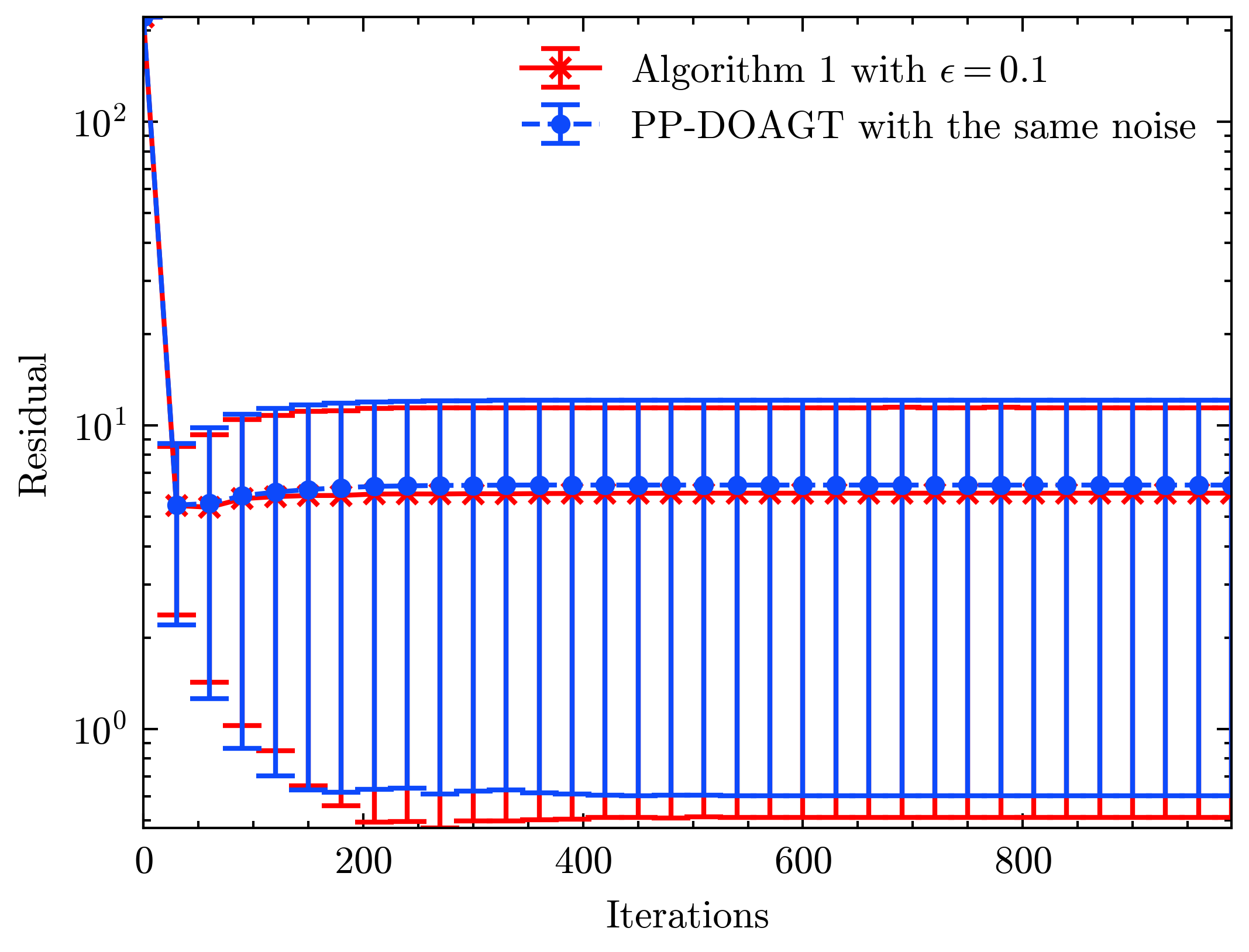}
    \end{subfigure}
    \caption{Comparison of convergence between Algorithm \ref{alg:alg_private} and the PP-DOAGT method \cite{huang2024differential} under the same noise in distributed estimation problems.}
    \label{fig_residual_ppdoagt}
\end{figure*}

\begin{figure*}[!t]
    \centering
    \begin{subfigure}{0.3\textwidth}
        \includegraphics[width=\linewidth]{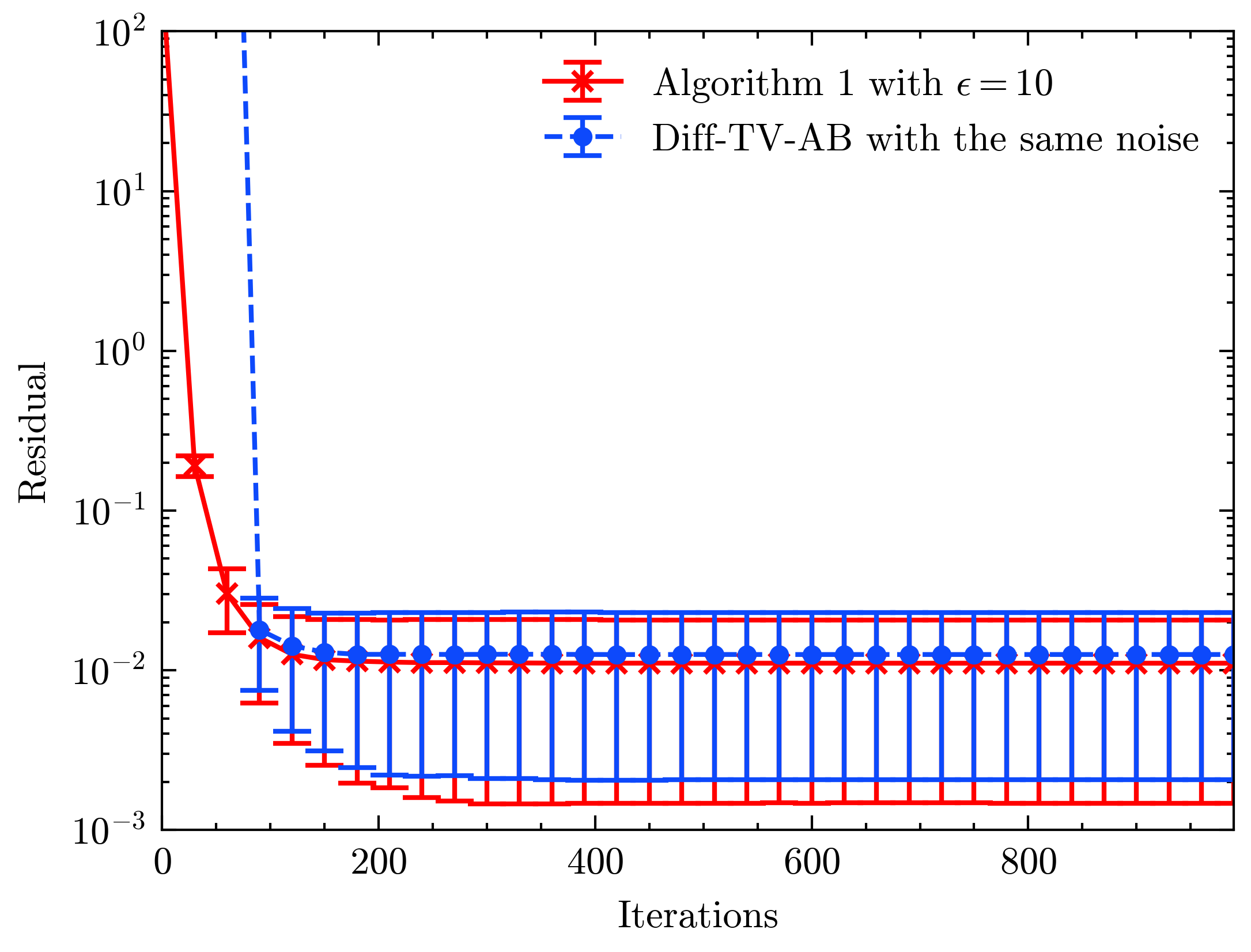}
    \end{subfigure}
    \hfill
    \begin{subfigure}{0.3\textwidth}
        \includegraphics[width=\linewidth]{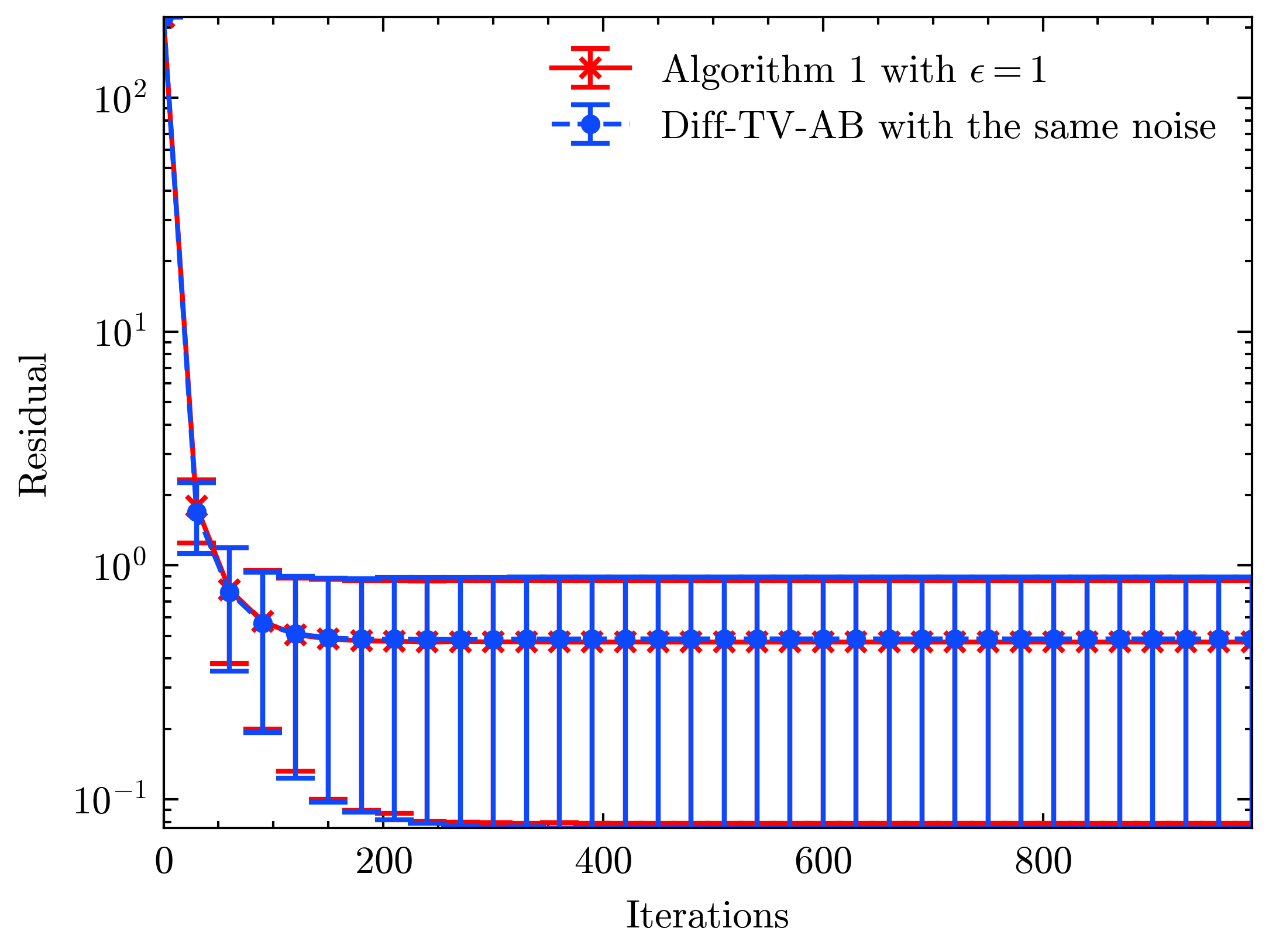}
    \end{subfigure}
    \hfill
    \begin{subfigure}{0.3\textwidth}
        \includegraphics[width=\linewidth]{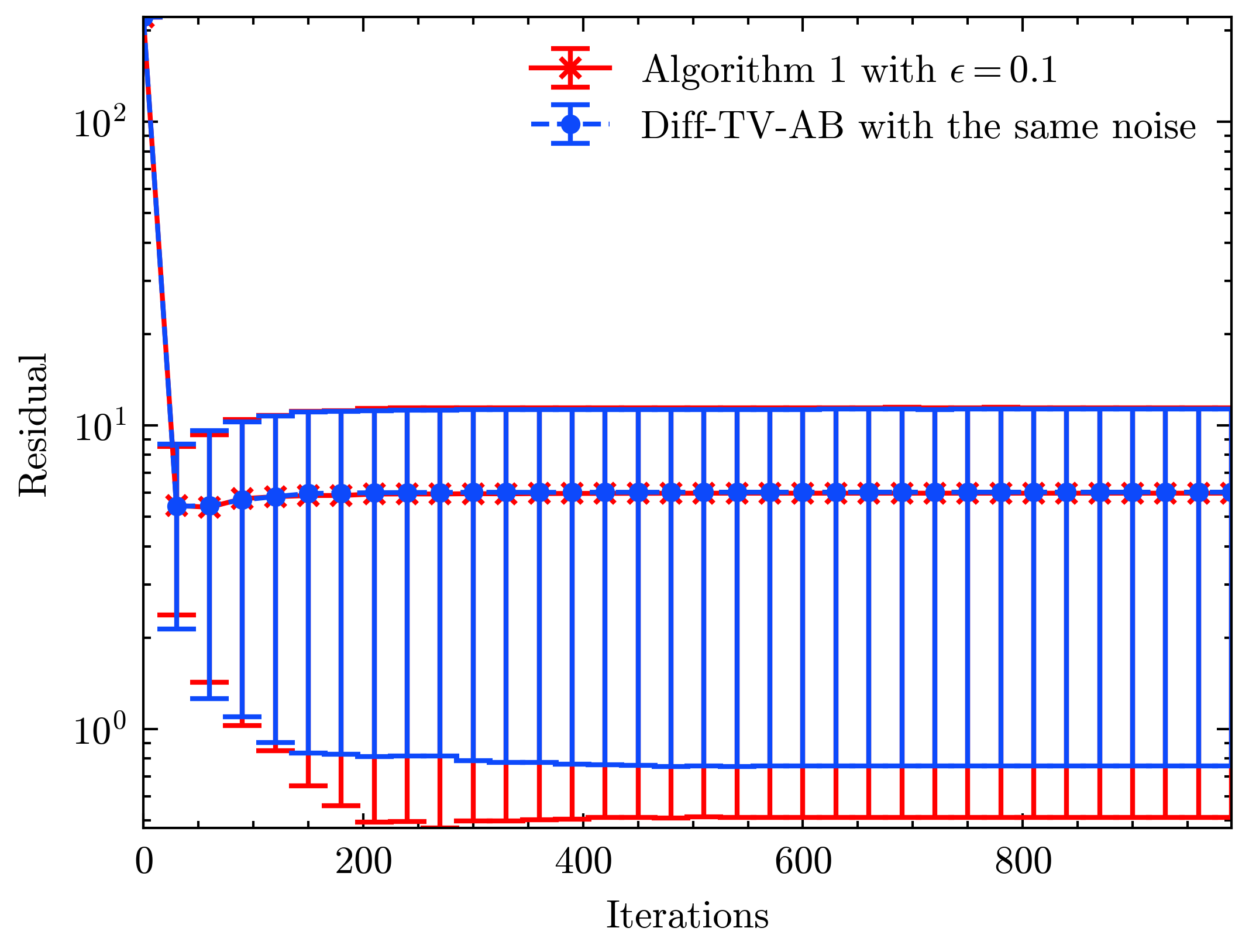}
    \end{subfigure}
    \caption{Comparison of convergence between Algorithm \ref{alg:alg_private} and the Diff-TV-AB method \cite{Yang2025Differentially} under the same noise in distributed estimation problems.}
    \label{fig_residual_difftvab}
\end{figure*}

\subsection{Comparison of Differentially Private Methods Based on Distributed Sensor Fusion}
In this subsection, we validate the effectiveness of Algorithm \ref{alg:alg_private} by comparing it with existing differentially private methods on the canonical distributed sensor fusion problem. In this problem, all $n$ sensors collaborate to estimate an unknown variable $\bm{x}$ by solving the following optimization problem:
\begin{equation}\label{problem_sensor}
\min_{\bm{x}\in\mathbb{R}^{p}} \sum_{i=1}^{n}\left( \| \bm{v}_i - M_i \bm{x} \|^2 + \omega_i \| \bm{x} \|^2 \right),
\end{equation}
where $\bm{v}_i \in \mathbb{R}^{m}$, $M_i \in \mathbb{R}^{m \times p}$, and $\omega_i \in \mathbb{R}$ represent the observation, measurement matrix, and regularization coefficient of sensor $i$, respectively. The parameter settings are consistent with those in \cite{Liu2024CryptographyBasedPM}. We consider a multi-agent system of 100 sensors, where the communication network is generated using an Erdős-Rényi (ER) random graph model with an edge creation probability of 0.1, as illustrated in Fig. \ref{fig_graph}. The dimensions in Problem (\ref{problem_sensor}) are set as $m = 3$ and $p  = 2$. 

For Algorithm \ref{alg:alg_private}, the initial states $\bm{x}_i(0)$ are randomly sampled from a standard normal distribution. The stepsize $\alpha_k$ and noise parameter $\nu_k$ are set according to (\ref{eq_alpha_nu}), and the relevant parameters $\gamma$, $\beta$, $q_2$, and $q_1$ are tuned following the heuristic strategy described in Remark \ref{remark: tuning}. We implement Algorithm \ref{alg:alg_private} under different privacy budgets ($\epsilon \in \{0.1, 1, 10\}$). For $\epsilon = 0.1/1/10$, the parameters are set as $\gamma = 0.001/0.001/0.002$, $\beta = 1000/1000/100$, $q_{2} = 0.99/0.99/0.99$, and $q_{1} = 0.92/0.97/0.97$, respectively. For each case, we calculate the mean and standard deviation of the residuals (denoted as ${\| \mathbf{x}(k) - \mathbf{x}^*\|^2}$) over 1000 trials, with the results shown as the red line in Fig. \ref{fig_residual_dpop}. It can be observed that our algorithm quickly approaches the optimal solution under all privacy budgets. This confirms that the proposed algorithm can achieve arbitrary $\epsilon$-DP. Moreover, higher privacy levels result in lower optimization accuracy, which aligns with the conclusion of Theorem \ref{theorem:convergence}.

For comparison, we implement the DPOP method from \cite{huang2015differentially} under the same privacy budgets. The corresponding results are shown by the blue lines in Fig. \ref{fig_residual_dpop}. It can be seen that our algorithm achieves higher accuracy than the DPOP method. This is because Algorithm \ref{alg:alg_private}'s sensitivity is more compact, and the introduction of auxiliary variable $\bm{y}_{i}$ enhances its convergence. Furthermore, we compare the proposed algorithm with the latest PP-DOAGT method \cite{huang2024differential} and the Diff-TV-AB method \cite{Yang2025Differentially}. Note that, unlike our algorithm and DPOP, both PP-DOAGT and Diff-TV-AB do not provide an explicit closed-form expression that directly maps a given privacy budget to the required noise parameters. Instead, they offer an implicit and complex relationship in which the privacy budget is expressed as a function of the noise and other internal parameters (see Eq. (17) or Eq. (22) in \cite{huang2024differential} and Theorem 3 in \cite{Yang2025Differentially}). Such a relationship cannot be directly used to derive deployable noise sequences under a prescribed privacy budget. Thus, to enable a meaningful comparison, we inject the same noise-variance sequence (i.e., the same noise conditions) into all three algorithms. In addition, since our work does not require Assumption 3 of \cite{Yang2025Differentially} to achieve DP, for fairness, we replace the constant stepsize in Diff-TV-AB with a diminishing stepsize. The results are shown in Figs. \ref{fig_residual_ppdoagt} and \ref{fig_residual_difftvab}. Under higher privacy levels, with privacy budgets computed via the theoretical formulas of PP-DOAGT itself being 18446.44, 3652.67, and 365.27 (where $\delta = 1$), and those computed via the theoretical formulas of Diff-TV-AB itself being 102.40, 20.34, and 7.12 (where $\vartheta = 1$), our algorithm achieves optimization accuracy comparable to that of the PP-DOAGT and Diff-TV-AB methods. This is because our algorithm exhibits lower sensitivity than PP-DOAGT and Diff-TV-AB, as shown in Table \ref{tab: sensitivity}.

\begin{figure}[!t]
  \centering
  \includegraphics[width = 0.8\linewidth]{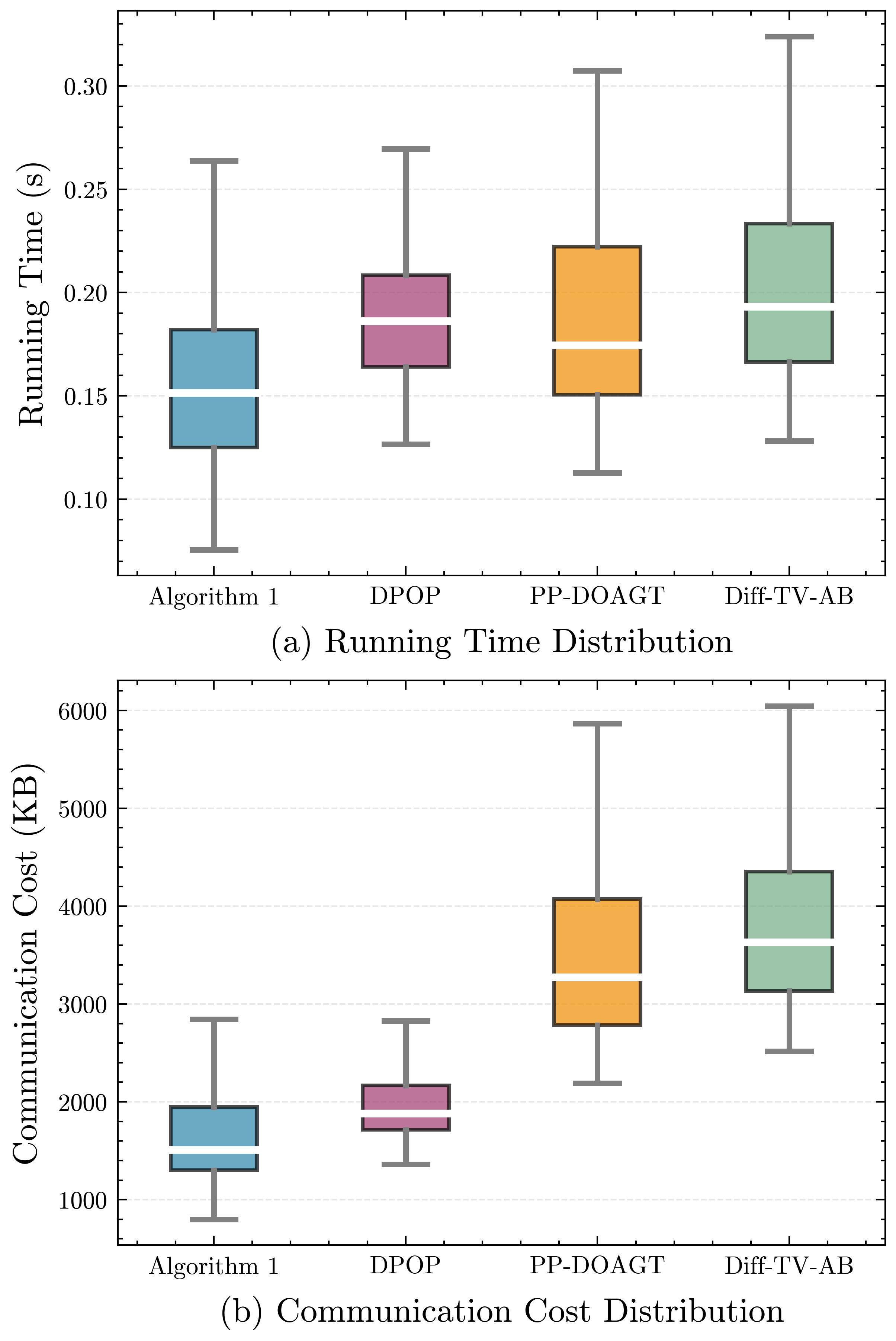}
  \caption{Running time and communication cost comparison of different algorithms.}
  \label{fig_algorithm_performance_comparison}
\end{figure}
To further validate the practical efficiency of Algorithm \ref{alg:alg_private}, we conduct a comparative experiment among four methods: Algorithm \ref{alg:alg_private}, DPOP, PP-DOAGT, and Diff-TV-AB under identical problem configurations. We adopt the same stopping criterion, that is, the iteration is terminated when the optimization residual falls below $1\times 10^{-2}$. To ensure statistical reliability, we perform 100 valid independent runs for each method and record the running time and total communication cost for every trial. The results are presented in Fig. \ref{fig_algorithm_performance_comparison}. Algorithm \ref{alg:alg_private} achieves the lowest median running time and the smallest median communication cost, demonstrating significantly better performance than PP-DOAGT and Diff-TV-AB. This advantage can be attributed to two main factors. On one hand, compared with DPOP, Algorithm \ref{alg:alg_private} exhibits better convergence behavior, reaching the prescribed accuracy in fewer iterations. On the other hand, relative to PP-DOAGT and Diff-TV-AB, Algorithm \ref{alg:alg_private}  significantly reduces the per-iteration computational and communication overhead (see Table \ref{table_algorithm_communication}), which yields higher efficiency in each iteration step.


\subsection{Comparison of Differentially Private Methods Based on Distributed Learning on MNIST}
In this subsection, we further validate the effectiveness of Algorithm \ref{alg:alg_private} through distributed learning tasks on the MNIST dataset, while comprehensively evaluating its performance via parameter sensitivity analysis and privacy-accuracy trade-off studies. The MNIST dataset, a classic benchmark comprising handwritten digit images, is widely utilized for evaluating algorithm performance in image classification tasks. In this experiment, we consider a distributed learning scenario with 100 agents collaboratively training a convolutional neural network (CNN), whose communication topology is illustrated in Fig. \ref{fig_graph}. The CNN architecture adopts a two-layer convolutional structure: each layer consists of a $5\times 5$ convolution, BatchNorm normalization, and a $2\times 2$ max-pooling operation, followed by a fully-connected layer that outputs predictions over 10 classes. Each agent maintains a local copy of this CNN. We uniformly distribute the MNIST training images across all agents and train the model using a batch size of 16 with a cross-entropy loss function.

\begin{figure}[!t]
  \centering
  \includegraphics[width = 0.8\linewidth]{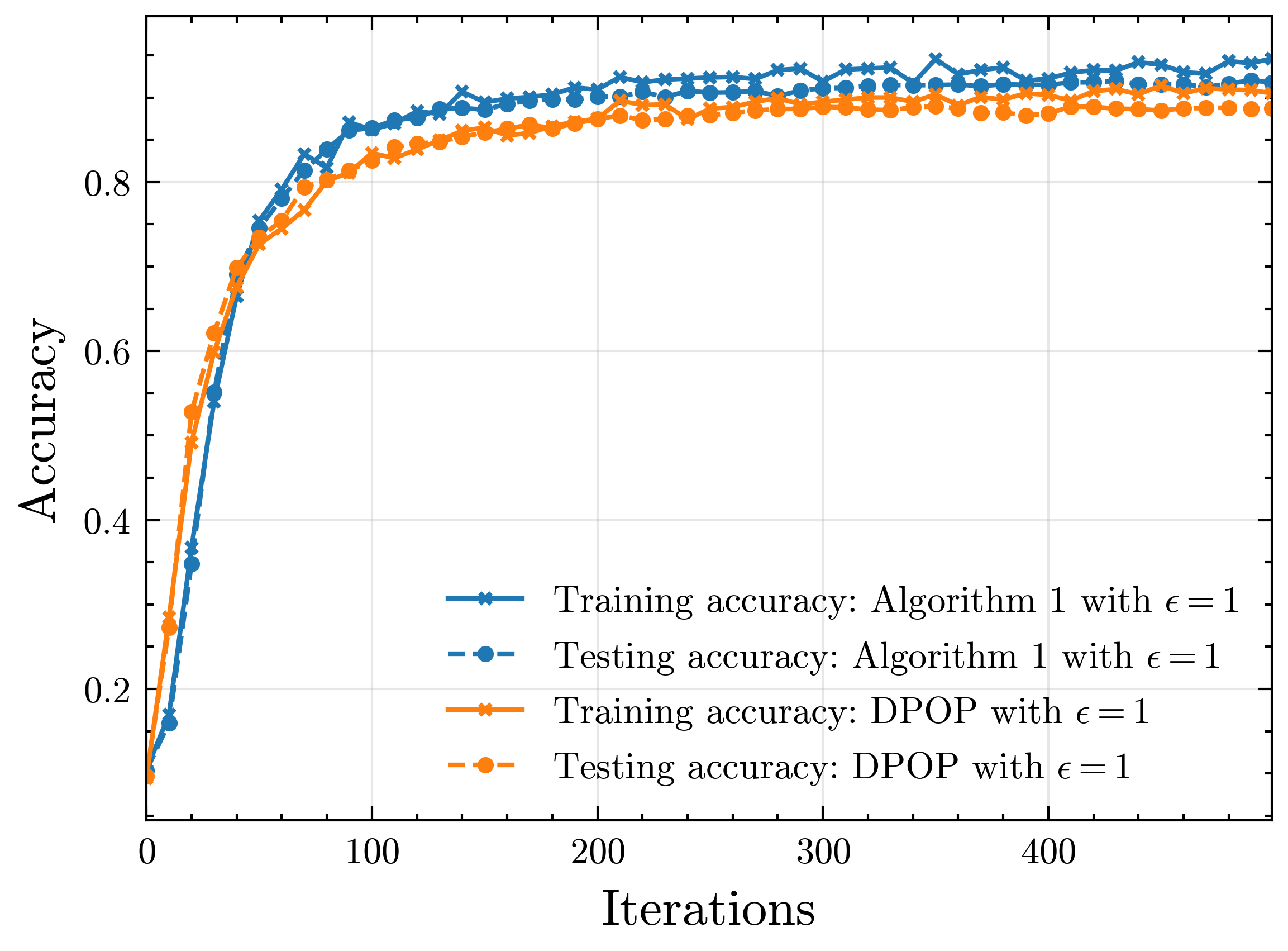}
  \caption{Accuracy comparison between Algorithm \ref{alg:alg_private} and DPOP \cite{huang2015differentially} under the same privacy budget on MNIST image classification.}
  \label{fig_accuracy_comparison_alg1_vs_dpop}
\end{figure}

\begin{figure}[!t]
  \centering
  \includegraphics[width = 0.8\linewidth]{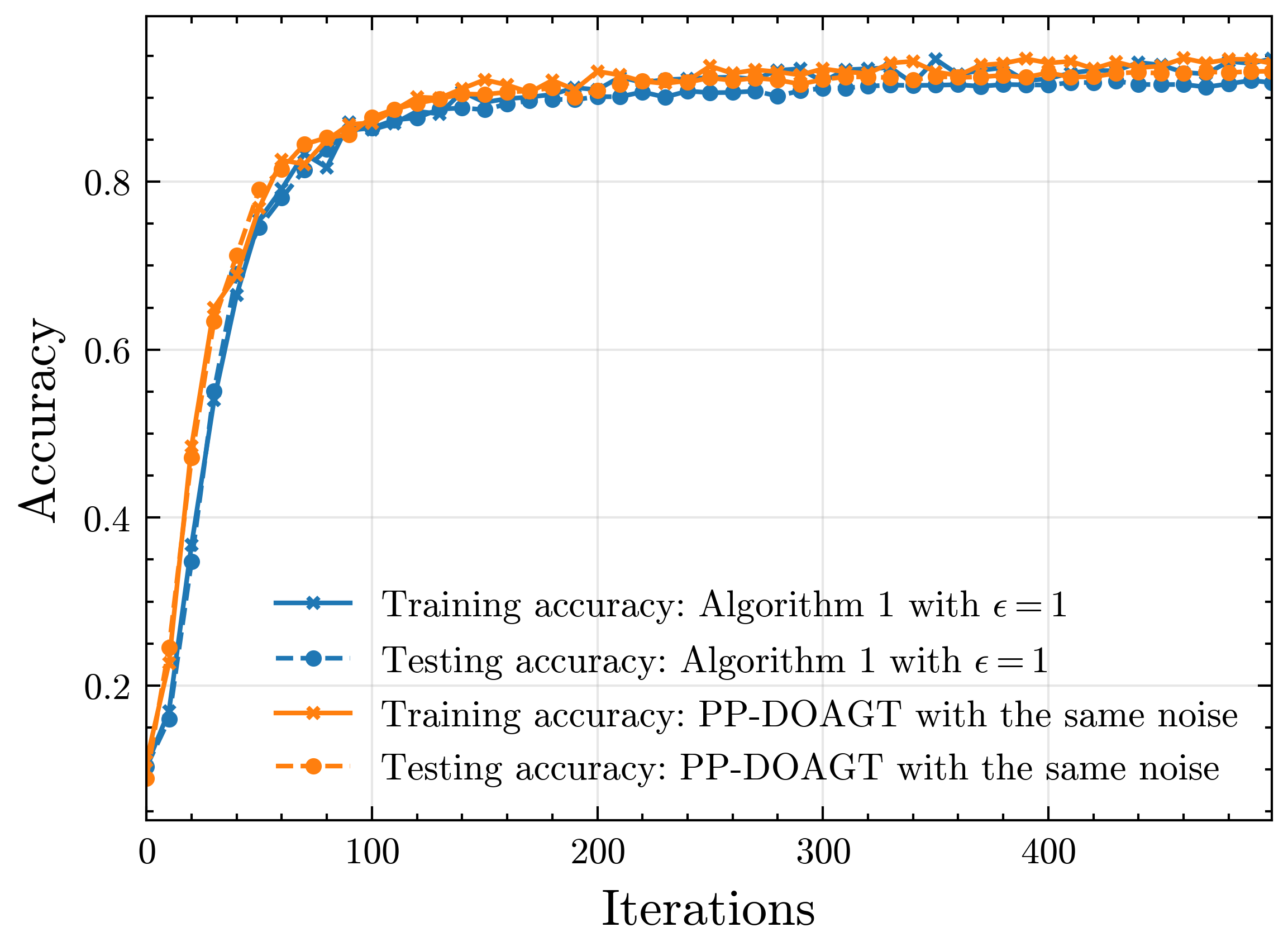}
  \caption{Accuracy comparison between Algorithm \ref{alg:alg_private} and PP-DOAGT \cite{huang2024differential} under the same noise on MNIST image classification.}
  \label{fig_accuracy_comparison_alg1_vs_ppdoagt}
\end{figure}

\begin{figure}[!t]
  \centering
  \includegraphics[width = 0.8\linewidth]{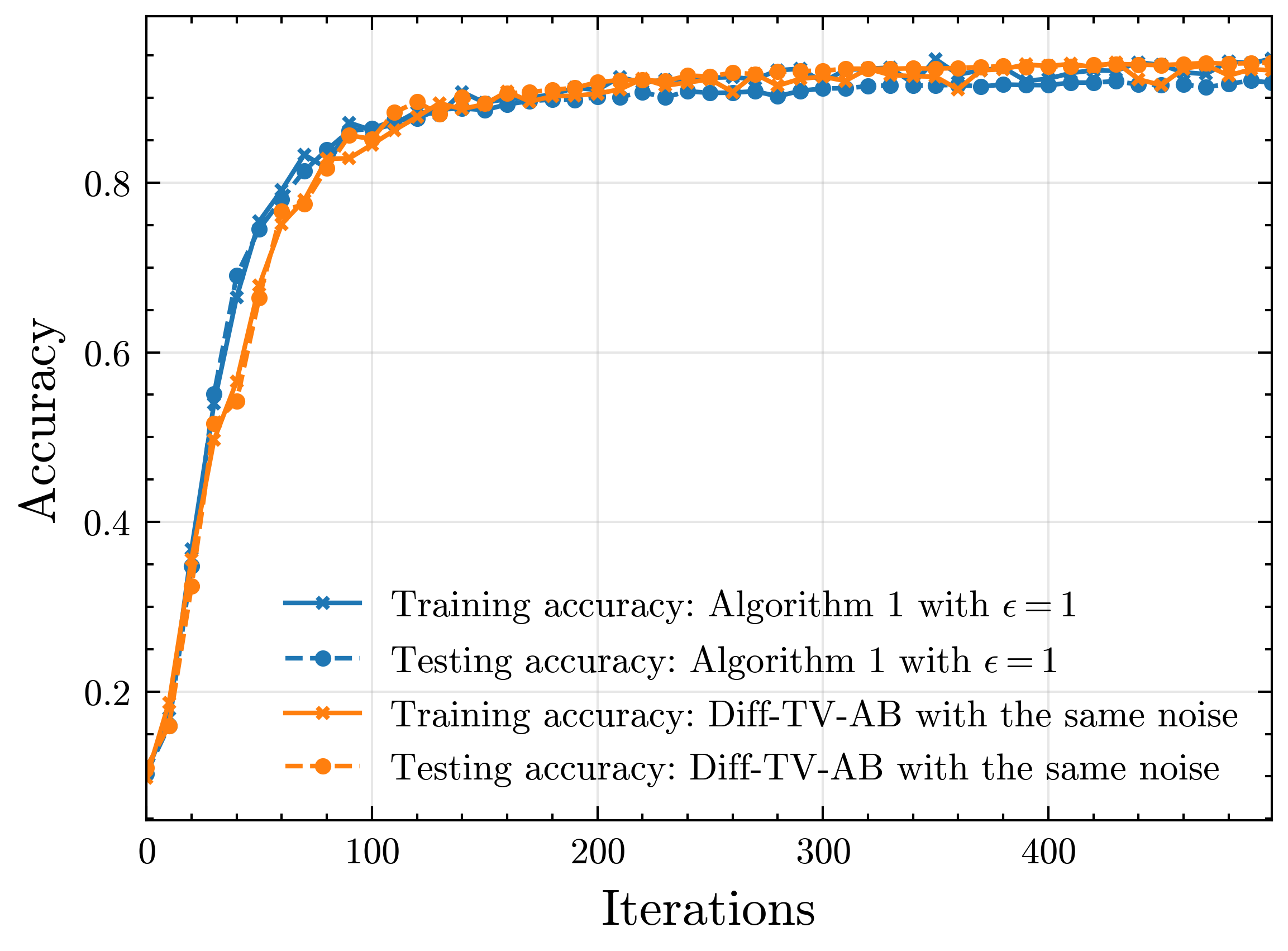}
  \caption{Accuracy comparison between Algorithm \ref{alg:alg_private} and Diff-TV-AB \cite{Yang2025Differentially} under the same noise on MNIST image classification.}
  \label{fig_accuracy_comparison_alg1_vs_difftvab}
\end{figure}

For Algorithm \ref{alg:alg_private}, the relevant parameters are set as $\gamma = 0.001$, $\beta = 100$, $q_2 = 0.999$, and $q_1 = 0.998$. We record the evolution of training and testing accuracy of Algorithm \ref{alg:alg_private} under a privacy budget of $\epsilon = 1$, as shown by the blue lines in Fig. \ref{fig_accuracy_comparison_alg1_vs_dpop}. The results demonstrate that the final training and testing accuracy both exceed 90\%, indicating that our algorithm can maintain high model performance while ensuring DP. For comparison, we implement the DPOP method from \cite{huang2015differentially}, with its performance depicted by the orange lines in Fig. \ref{fig_accuracy_comparison_alg1_vs_dpop}. Under the same privacy budget, our algorithm demonstrates superior performance in both training and testing accuracy compared to the DPOP method. Furthermore, we reproduce two recently proposed methods, PP-DOAGT \cite{huang2024differential} and Diff-TV-AB \cite{Yang2025Differentially}, with experimental results shown by the orange lines in Figs. \ref{fig_accuracy_comparison_alg1_vs_ppdoagt} and \ref{fig_accuracy_comparison_alg1_vs_difftvab}. The results indicate that under the same noise conditions, our algorithm achieves optimization accuracy comparable to both PP-DOAGT and Diff-TV-AB. Benefiting from its lower sensitivity design (as shown in Table \ref{tab: sensitivity}), our algorithm exhibits stronger privacy protection performance relative to PP-DOAGT and Diff-TV-AB.

\begin{figure}[!t]
  \centering
  \includegraphics[width = 0.8\linewidth]{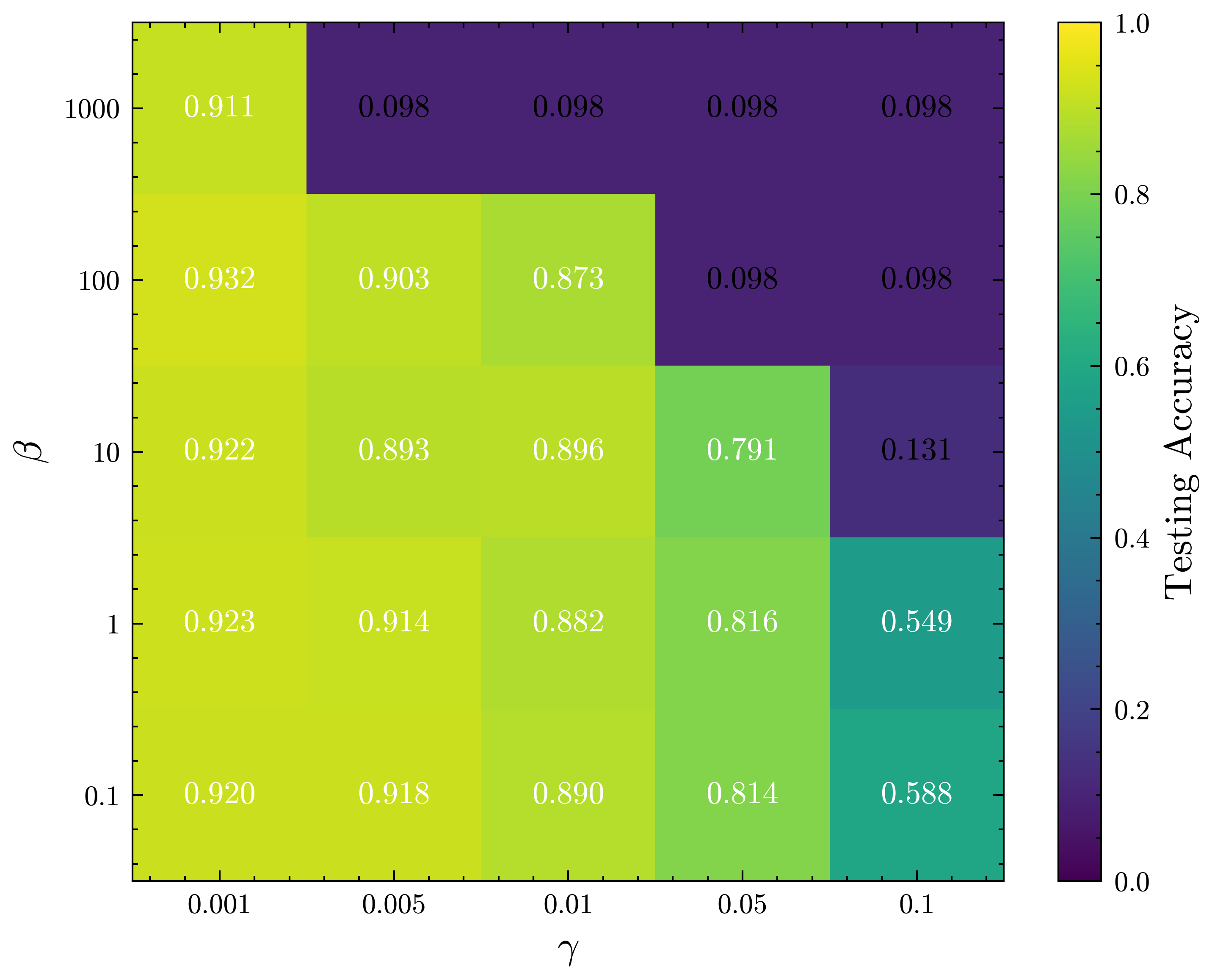}
  \caption{Testing accuracy of Algorithm \ref{alg:alg_private} under different $\gamma$ and $\beta$ parameter configurations.}
  \label{fig_sensitivity_analysis}
\end{figure}

To evaluate the robustness and stability of Algorithm \ref{alg:alg_private}, we conduct a parameter sensitivity analysis, focusing on the effects of initial stepsize parameter $\gamma$ and parameter $\beta$ on model performance. The parameter $\gamma$ is tested over the range $\{0.001, 0.005, 0.01, 0.05, 0.1\}$, while $\beta$ is evaluated across $\{0.1, 1, 10, 100, 1000\}$. Experimental results using testing accuracy as the evaluation metric are presented in Fig. \ref{fig_sensitivity_analysis}. Analysis reveals that when $\gamma$ takes relatively small values (specifically $\gamma \le 0.01$), and $\beta$ satisfies $\gamma\beta \le 1$, the algorithm achieves stable testing accuracy exceeding 85\%, demonstrating desirable performance levels. This finding experimentally validates the necessity of the condition $\gamma\beta\le 1$ for convergence. Furthermore, we observe that within the range of $\gamma \le 0.01$, even with substantial variations in $\beta$ values, testing accuracy remains consistently high as long as $\gamma\beta\le 1$ is maintained. This indicates Algorithm \ref{alg:alg_private}'s insensitivity to parameter variations and confirms its strong robustness.

\begin{figure}[!t]
  \centering
  \includegraphics[width = 0.8\linewidth]{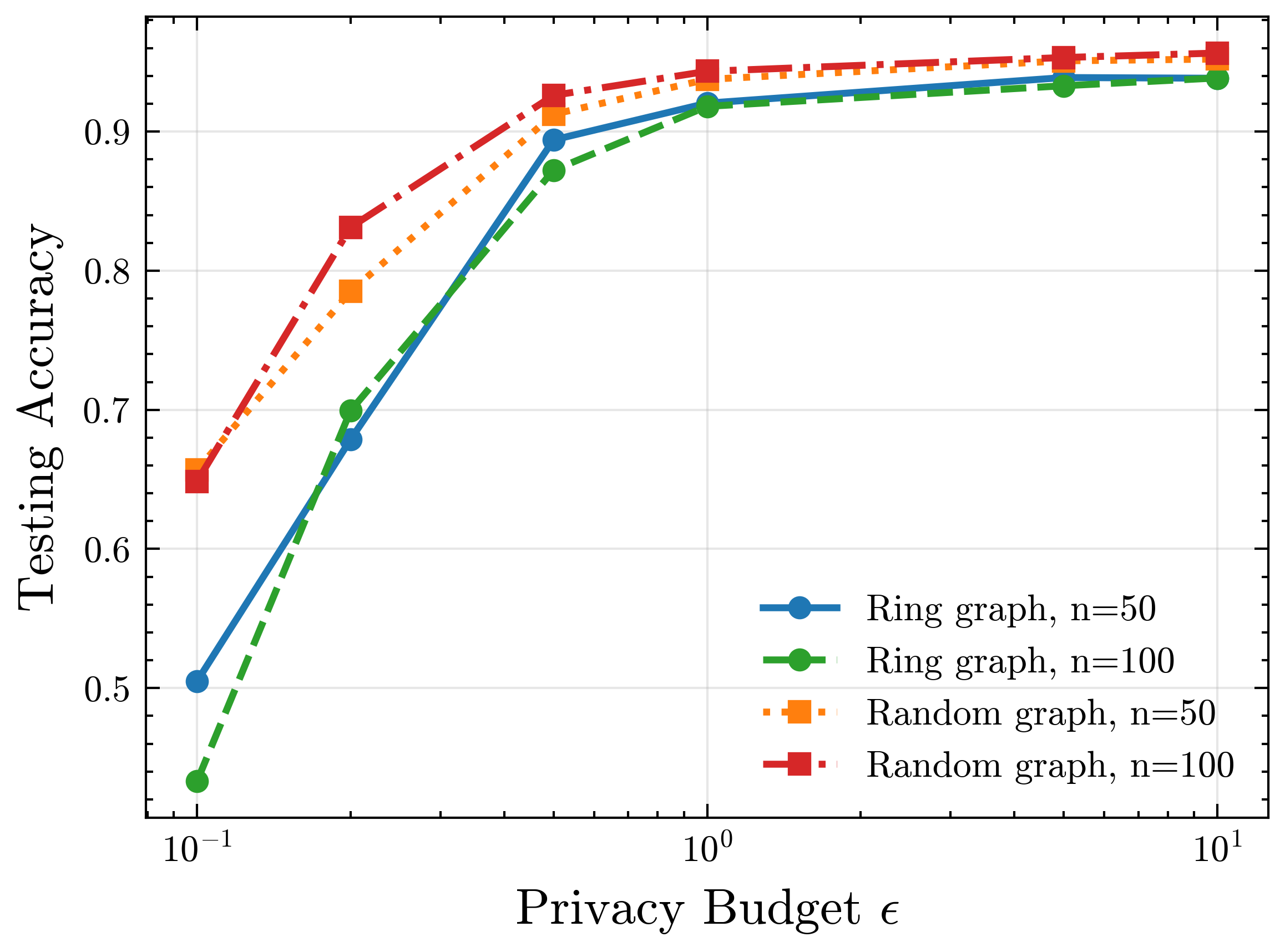}
  \caption{Privacy-accuracy trade-off of Algorithm \ref{alg:alg_private} under different network topologies and scales.}
  \label{fig_privacy_accuracy_tradeoff}
\end{figure}

To further evaluate the performance of Algorithm \ref{alg:alg_private}, we systematically investigate its privacy-accuracy trade-off characteristics under different network topologies (ring graph and ER random graph) and network scales ($n=50$ and $n=100$). The experimental results are presented in Fig. \ref{fig_privacy_accuracy_tradeoff}. Analysis of Fig. \ref{fig_privacy_accuracy_tradeoff} reveals that across all experimental configurations, testing accuracy consistently improves with increasing the privacy budget $\epsilon$, clearly demonstrating the inherent privacy-accuracy trade-off in differentially private algorithms, which aligns with the theoretical result in Theorem \ref{theorem:convergence}. Furthermore, under the same privacy budgets, the ER random graph topology achieves higher testing accuracy compared to the ring graph topology. This phenomenon can be attributed to the higher connectivity of random graphs, which facilitates information propagation among agents, thereby accelerating convergence and enhancing final performance. Notably, across all combinations of network topologies and scales, Algorithm \ref{alg:alg_private} maintains testing accuracy above 85\% when the privacy budget $\epsilon \ge 0.5$. These results demonstrate both the scalability of Algorithm \ref{alg:alg_private} and its capability to maintain high model accuracy while providing moderate privacy protection.


\begin{figure}[!t]
  \centering
  \includegraphics[width = 0.3\linewidth]{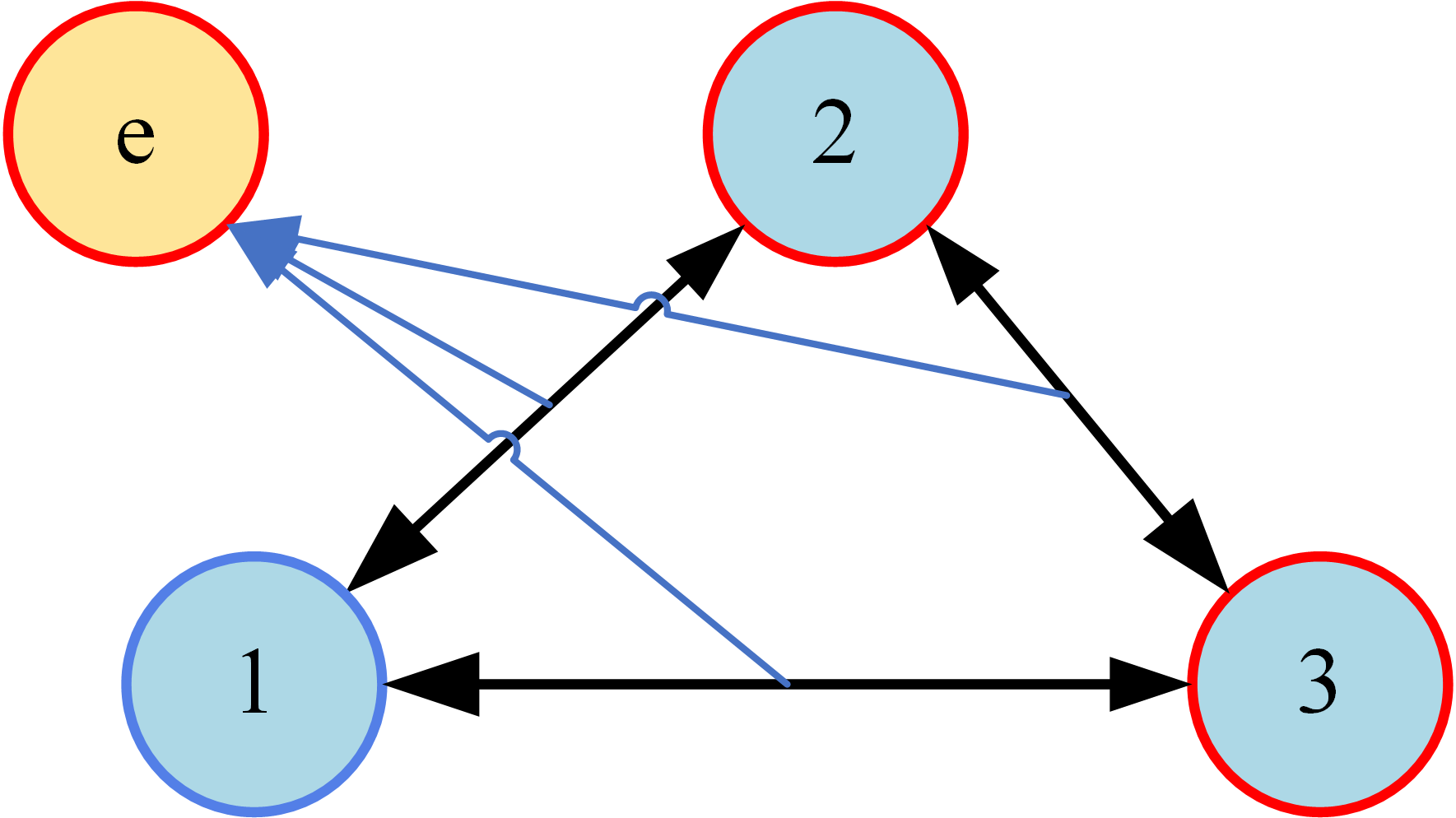}
  \caption{Schematic illustration of the honest-but-curious adversary and the external eavesdropper.}
  \label{fig_graph_attack}
\end{figure}

\begin{table*}[!t]
    \centering
    \caption{Comparison of the Privacy, Accuracy, Applicability, and Efficiency of the Three Different Types of Algorithms.}
    \begin{tabular}{cccccc}
        \toprule
        \multirow{2}{*}{Methods} & M-NMI under  & M-NMI under & \multirow{2}{*}{Accuracy} & Topological & \multirow{2}{*}{Efficiency} \\
        & external eavesdropper & honest-but-curious adversary & & assumption & \\
        \midrule
        Algorithm \ref{alg:alg_private} with $\epsilon = 10$ & 0.52  & 0.52 & $1.9\times10^{-4}$ & \ding{55} & High \\
        Algorithm \ref{alg:alg_private} with $\epsilon = 1$ & 0.24  & 0.24 & $2.0\times10^{-3}$ & \ding{55} & High \\
        Algorithm \ref{alg:alg_private} with $\epsilon = 0.1$ & 0.047  & 0.047 & $3.0\times10^{-2}$ & \ding{55} & High \\
        Correlated randomness-based method \cite{huan2023dynamics} & 1 & 0.47 & $6.6\times10^{-9}$ & \Checkmark & High \\
        Encryption-based method \cite{zhang2019enabling} & 0 & 0.30 & $3.2\times10^{-6}$ & \ding{55} & Low\\
        \bottomrule
    \end{tabular}
    \label{table_NMI}  
\end{table*}

\subsection{Comprehensive Performance Comparison Across Different Types of Privacy-Preserving Methods}
To objectively evaluate the privacy levels of different types of privacy-preserving algorithms, we employ the Maximum Normalized Mutual Information (M-NMI), a modified mutual information-based metric from \cite{li2021privacypreserving}, as the privacy measure:
	\begin{equation}\label{eq:M-NMI}
	\begin{aligned}
	\text{M-NMI} \triangleq \max_{1\le k\le K}\left\{ \frac{I(\mathcal{V}_{i}(k), \mathcal{I}_{j}(k))}{I(\mathcal{V}_{i}(k), \mathcal{V}_{i}(k))} \right\},
	\end{aligned}
	\end{equation}
where $\mathcal{V}_{i}(k)$ is the private variable of agent $i$ at iteration $k$, $\mathcal{I}_{j}(k)$ represent the information attacker $j$ can obtain about $\mathcal{V}_{i}(k)$, and $K$ is the total number of iterations. $I(\mathcal{V}_{i}(k), \mathcal{I}_{j}(k))$ is the mutual information between $\mathcal{V}_{i}(k)$ and $\mathcal{I}_{j}(k)$, which quantifies the amount of agent $i$'s private information contained in the information acquired by attacker $j$. $I(\mathcal{V}_{i}(k), \mathcal{V}_{i}(k))$ represents the entropy of agent $i$'s private information, which is used for normalization. M-NMI quantifies the worst-case privacy leakage across all iterations. A smaller M-NMI value indicates stronger privacy preservation of the algorithm. For $\mathcal{V}_{i}(k)$, we consider the agent's gradients as its private information \cite{zhu2019deep}. For $\mathcal{I}_j(k)$, it depends on the attacker's model and the algorithm's update rule. We consider two common attacker models: \textit{the honest-but-curious adversary} and \textit{the external eavesdropper} \cite{goldreich2009foundations}. We adopt the system as shown in Fig. \ref{fig_graph_attack}, in which agent 1 is an honest agent, agents 2 and 3 are honest-but-curious adversaries, and agent $e$ is an external eavesdropper. Agents 1, 2, and 3 collaboratively solve Problem (\ref{problem_sensor}) with dimensions $m = 1$ and $p = 1$. Agents 2, 3, and $e$ all attempt to infer the private information of agent 1. For each method, we perform 5,000 Monte Carlo trials, each comprising 1,000 iterations. We then compute its M-NMI values under the honest-but-curious adversary and the external eavesdropper using the NPEET toolbox \cite{NPEET}.

Under Algorithm \ref{alg:alg_private}, the private information of agent 1 is $\nabla f_{1}\left(\bm{z}_{1}(k)\right), 1\le k\le K$, i.e., $\mathcal{V}_{1}(k) = \left\{ \nabla f_{1}\left(\bm{z}_{1}(k)\right) \right\}$. Honest-but-curious adversaries 2 and 3 can collude to infer the private information of agent 1. They receive $\bm{z}_{1}(k)$ from agent 1, infer $\bm{y}_{1}(k)$ using update (\ref{algorithm_2}) and public information $W$ and $\beta$, and approximate $\nabla f_{1}\left(\bm{z}_{1}(k)\right)$ using update (\ref{algorithm_3}) and public information $\alpha_{k}$, that is,
	\begin{equation}\label{eq:information_agent2}
	\begin{aligned}
	\mathcal{I}_{2}(k) = \left\{ \bm{z}_{1}(k), \bm{y}_{1}(k), \frac{1}{\alpha_{k}}\left( \bar{\bm{z}}_{1}(k) - \bm{z}_{1}(k) \right) - \bm{y}_{1}(k) \right\}.
	\end{aligned}
	\end{equation}
The external eavesdropper $e$ acquires the same information about agent 1 as the honest-but-curious adversaries, leading to $\mathcal{I}{e}(k) = \mathcal{I}_{2}(k)$. The corresponding simulation results under different privacy budgets are shown in the first three rows of Table \ref{table_NMI}. It can be seen that our algorithm can simultaneously defend against the honest-but-curious adversary and the external eavesdropper. Furthermore, as the privacy budget decreases, both the M-NMI value and optimization accuracy decline, which aligns with the intuition of a differentially private algorithm.

For comparison, we implement the correlated randomness-based privacy-preserving method from \cite{huan2023dynamics}. The correlated randomness-based method relies on an additional topological assumption: each honest agent has at least one neighboring agent that is also honest. Thus, for this method, we assume that agent 3 in Fig. \ref{fig_graph_attack} is also an honest agent, and we compute its M-NMI under the honest-but-curious adversary using the following information:
	\begin{equation*}
	\begin{aligned}
	\mathcal{I}_{2}(k) = \left\{ \bm{x}_{1}^{k}, \Lambda_{1}^{k}\bm{y}_{1}^{k}, C_{21}^{k}\bm{y}_{1}^{k} + B_{21}^{k}(\nabla f_{1}(\bm{x}_{1}^{k+1}) - \nabla f_{1}(\bm{x}_{1}^{k})) \right\},
	\end{aligned}
	\end{equation*}
which consists of information directly received by agent 2 from agent 1. The relevant results are shown in the fourth row of Table \ref{table_NMI}. It can be observed that this method achieves high accuracy, as the correlated randomness-based method enables exact convergence. Under the external eavesdropper, the M-NMI value of this method is 1, indicating that it cannot defend against the external eavesdropper. This is because the privacy of the correlated randomness-based method relies on the additional topological assumption, which does not work against the external eavesdropper. We also implement the encryption-based privacy-preserving method from \cite{zhang2019enabling}. Its M-NMI under the honest-but-curious adversary is computed using the following information:
	\begin{equation*}
	\begin{aligned}
	\mathcal{I}_{2}(k) = \Big\{ \frac{1}{\alpha_{k}}\Big(\sum_{j\in\{2,3\}} b_{1\to j}b_{j\to1}(\bm{x}_{j}(k) - \bm{x}_{1}(k)\Big) \Big\},
	\end{aligned}
	\end{equation*}
which is an estimate of agent 1's gradient according to its update rule. The corresponding results are shown in the last row of Table \ref{table_NMI}. It can be seen that this method achieves an M-NMI value of 0 under the external eavesdropper, meaning the encryption-based method is fully resistant to the external eavesdropper. This is because the external eavesdropper can only access encrypted data. Moreover, this method exhibits high accuracy as the encryption-based method can also achieve exact convergence. However, the encryption-based method requires numerous encryption and decryption operations, resulting in low computational and communication efficiency. In addition, although all three types of algorithms can defend against the honest-but-curious adversary, Table \ref{table_NMI} demonstrates that our algorithm allows for enhanced privacy performance through adjustable privacy budgets, whereas the privacy performance of the other two methods remains fixed.

Based on the comparative results presented in Table \ref{table_NMI}, the proposed algorithm demonstrates distinct advantages in the following practical deployment scenarios: 
\begin{enumerate}
\item Scenarios Requiring Defense Against Both Internal and External Threats: Correlated randomness-based methods fail to protect against external eavesdroppers (M‑NMI = 1). The proposed algorithm achieves low M‑NMI values under the external eavesdropper and honest-but-curious adversary, confirming its capability to resist both types of adversaries simultaneously.
\item Scenarios Demanding Tunable and Quantifiable Privacy Guarantees: Both correlated randomness-based and encryption-based methods offer a fixed level of privacy protection. Our algorithm allows the privacy budget $\epsilon$ to be adjusted according to specific application requirements, providing configurable and provable privacy.
\item Resource-Constrained Large-Scale Distributed Systems: Encryption-based methods incur high computational and communication overhead due to frequent encryption and decryption operations. The proposed algorithm only requires lightweight noise injection and, compared to existing DP gradient-tracking methods, reduces the number of communication variables and noise injections by half, significantly lowering the resource burden.
\item Networks with Dynamic or Unknown Topology: Correlated randomness-based methods rely on the assumption that each honest agent has at least one honest neighbor, which is often impractical in self-organizing or time-varying networks. Our algorithm only requires network connectivity, making it directly applicable to mobile or peer-to-peer networks without imposing additional topological constraints.
\end{enumerate}
In summary, the proposed differentially private algorithm provides a versatile and practical solution for privacy-preserving distributed optimization. It effectively balances provable privacy, a configurable privacy-accuracy trade-off, lower resource overhead, and broader topological applicability under various real-world constraints and threat models.

\section{Conclusion}\label{section_conclusion}
In this paper, we propose a differentially private distributed optimization algorithm for undirected graphs, and derive an explicit closed-form expression for the noise parameter as a function of the privacy budget. Compared to existing state-of-the-art algorithms, the proposed algorithm exhibits lower sensitivity, thereby achieving higher optimization accuracy under the same privacy budget. Furthermore, the proposed algorithm reduces both the number of communication variables and the amount of injected noise by half, significantly enhancing communication and computational efficiency. Moreover, we rigorously analyze the algorithm's differential privacy, establish its convergence, and derive an upper bound on its optimization accuracy. The theoretical results reveal the inherent trade-off between privacy and accuracy and provide guidance for parameter tuning. Future work will proceed along three promising directions:
(i) Extending the sensitivity‑reduction framework to directed graph topologies.
(ii) Enhancing the algorithm's convergence rate while preserving its privacy and accuracy guarantees, where potential pathways include designing adaptive stepsize schedules (e.g., two-phase or noise-aware decay) and incorporating variance‑reduction techniques.
(iii) Generalizing the algorithm to broader problem classes, including convex (but not strongly convex) objectives and structured nonconvex problems frequently encountered in modern machine learning.


\appendices
\section{Proof of Proposition \ref{prop:sensitivity}}\label{proof_proposition}
\begin{proof}
We prove the proposition by directly comparing the algorithm's sensitivity under different designs. We consider the classical differentially private distributed gradient descent (DP-DGD) as the baseline algorithm:
	\begin{equation}
	\mathbf{x}(k) = W\mathbf{z}(k) - \alpha_{k} \nabla F(\mathbf{z}(k)),
	\end{equation}
where $\mathbf{z}(k) \triangleq \mathbf{x}(k - 1) + \bm{\xi}(k)$ is the noisy state and $\alpha_{k}$ is the stepsize. Note that in the DP-DGD algorithm, agents use their own noisy states for consensus and compute gradients based on noisy states. According to Definition \ref{def:sensitivity}, and since
	\begin{equation}\label{eq:sensitivity_dpdgd}
	\begin{aligned}
\| \mathbf{x}(k) - \mathbf{x}'(k) \|_{1} &= \| \bm{x}_{i_{0}}(k) - \bm{x}_{i_{0}}'(k) \|_{1} \\
&= \alpha_{k} \| \nabla f_{i_{0}}(\bm{z}_{i_{0}}(k)) - \nabla f_{i_{0}}'(\bm{z}_{i_{0}}(k)) \|_{1} \\
&\le \delta \alpha_{k},
	\end{aligned}
	\end{equation}	
the sensitivity of the DP-DGD algorithm, denoted as $\Delta_{1}(k)$, is bounded by $\delta \alpha_{k}$, i.e., $\Delta_{1}(k) \le \delta \alpha_{k}$. The proof details for (\ref{eq:sensitivity_dpdgd}) can be found in Theorem \ref{lemma_sensitivity} or in Lemma 4 of \cite{huang2015differentially}.

If agents use their true states for consensus, the DP-DGD algorithm can be reformulated as
	\begin{equation}
	\mathbf{x}(k) = W_{0}\mathbf{x}(k-1) + (W-W_{0})\mathbf{z}(k) - \alpha_{k} \nabla F(\mathbf{z}(k)),
	\end{equation}
where $W_{0} \triangleq \text{diag}\{W\}$. Let $\Delta_{2}(k)$ denote the sensitivity of the above algorithm at iteration $k$. Since $\| \bm{x}_{i_{0}}(k) - \bm{x}_{i_{0}}'(k) \|_{1} = \|W_{i_{0}i_{0}}(\bm{x}_{i_{0}}(k-1) - \bm{x}_{i_{0}}'(k-1)) - \alpha_{k}( \nabla f_{i_{0}}(\bm{z}_{i_{0}}(k)) - \nabla f_{i_{0}}'(\bm{z}_{i_{0}}(k))) \|_{1} \le W_{i_{0}i_{0}}\Delta_{2}(k-1) + \delta \alpha_{k}$, we have $\Delta_{2}(k) \le W_{i_{0}i_{0}}\Delta_{2}(k-1) + \delta \alpha_{k}$. Due to $\Delta_{2}(k-1) \ge 0$ for all $k\ge1$, we obtain the first argument.

If agents compute gradients based on their true states, the DP-DGD algorithm is reformulated as
	\begin{equation}
	\mathbf{x}(k) = W\mathbf{z}(k) - \alpha_{k} \nabla F(\mathbf{x}(k-1)).
	\end{equation}
Let $\Delta_{3}(k)$ denote the sensitivity of the above algorithm at iteration $k$. Since $\| \bm{x}_{i_{0}}(k) - \bm{x}_{i_{0}}'(k) \|_{1} = \alpha_{k} \| \nabla f_{i_{0}}(\bm{x}_{i_{0}}(k-1)) - \nabla f_{i_{0}}'(\bm{x}_{i_{0}}'(k-1)) \|_{1} = \alpha_{k} \| \nabla f_{i_{0}}(\bm{x}_{i_{0}}(k-1)) - \nabla f_{i_{0}}(\bm{x}_{i_{0}}'(k-1)) + \nabla f_{i_{0}}(\bm{x}_{i_{0}}'(k-1)) - \nabla f_{i_{0}}'(\bm{x}_{i_{0}}'(k-1)) \|_{1} \le \sqrt{n}L\alpha_{k}\Delta_{3}(k-1) + \delta \alpha_{k}$, we obtain $\Delta_{3}(k) \le \sqrt{n}L\alpha_{k}\Delta_{3}(k-1) + \delta \alpha_{k}$. Because of $\Delta_{3}(k-1) \ge 0$ for all $k\ge1$, we have the second argument.

For the third argument, when neighboring agents share only variable $\bm{x}$, the algorithm's sensitivity depends on the bound of $\| \mathbf{x}(k) - \mathbf{x}'(k) \|_{1}$. If agents are also required to exchange additional variables (e.g., an auxiliary variable $\bm{y}$), the algorithm's sensitivity increases to $\| \mathbf{x}(k) - \mathbf{x}'(k) \|_{1} + \| \mathbf{y}(k) - \mathbf{y}'(k) \|_{1}$. Hence, the third argument holds.
\end{proof}

\section{Proof of Basic Lemmas}\label{proof_lemmas}
\subsection{Proof of Lemma \ref{lemma:xi}}\label{appe:proof_lemma_xi}
\begin{proof}
It follows from Assumption \ref{assumption_noise} that $\mathbb{E}\left[\bm{\xi}(k)|\mathcal{M}_{k}\right] = 0$, $\mathbb{E}\left[\|\bm{\xi}(k)\||\mathcal{M}_{k}\right] \le np\nu_{k}$, and $\mathbb{E}\left[\|\bm{\xi}(k)\|^{2}|\mathcal{M}_{k}\right] = 2np\nu_{k}^{2}$. For part \textit{(a)}, it follows from the definition of $\mathbf{z}(k)$ that
\begin{equation}
\begin{small}
\begin{aligned}
\mathbb{E}\left[\langle\mathbf{z}(k),\bm{\xi}(k)\rangle|\mathcal{M}_{k}\right] &= \mathbb{E}\left[\langle \bm{\xi}(k),\bm{\xi}(k)\rangle|\mathcal{M}_{k}\right] \\
&\le \mathbb{E}\left[\|\bm{\xi}(k)\|^{2}|\mathcal{M}_{k}\right] \\
&= 2np\nu_{k}^{2}.
\end{aligned}
\end{small}
\end{equation}

For part \textit{(b)}, in light of Assumption \ref{assump:smooth_convex}, we have
\begin{equation}
\begin{small}
\begin{aligned}
&\quad \mathbb{E}\left[\left\langle\nabla F(\mathbf{z}(k)),\bm{\xi}(k)\right\rangle|\mathcal{M}_{k}\right] \\
&= \mathbb{E}\left[\left\langle\nabla F(\mathbf{z}(k))-\nabla F(\mathbf{x}(k-1)),\bm{\xi}(k)\right\rangle|\mathcal{M}_{k}\right] \\
&\le L\mathbb{E}\left[\|\bm{\xi}(k)\|^{2}|\mathcal{M}_{k}\right] \\
&= 2npL\nu_{k}^{2}.
\end{aligned}
\end{small}
\end{equation}

For part \textit{(c)}, following a similar derivation as in Lemma 8 of \cite{pu2021distributed}, we obtain
\begin{equation}
\begin{small}
\begin{aligned}
&\quad \mathbb{E}\left[\left\langle\nabla F(\mathbf{x}(k)),\bm{\xi}(k)\right\rangle|\mathcal{M}_{k}\right] \\
&\le \sum_{i=1}^{n}\left( L|W_{ii}-\alpha_{k}\beta+\alpha_{k}\beta W_{ii}|+ L^{2}\alpha_{k} \right) \mathbb{E}\left[\|\bm{\xi}_{i}(k)\|^{2}|\mathcal{M}_{k}\right] \\
&\le 2np(L+L^{2}\alpha_{k})\nu_{k}^{2},
\end{aligned}
\end{small}
\end{equation}
where the second inequality follows from the fact that $W_{ii}-\alpha_{k}\beta+\alpha_{k}\beta W_{ii}\in(-1,1)$. Then, we have
\begin{equation}
\begin{small}
\begin{aligned}
&\quad \mathbb{E}\left[\left\langle\nabla F(\mathbf{z}(k+1)),\bm{\xi}(k)\right\rangle|\mathcal{M}_{k}\right] \\
&= \mathbb{E}\left[\left\langle\nabla F(\mathbf{z}(k+1))-\nabla F(\mathbf{x}(k)),\bm{\xi}(k)\right\rangle|\mathcal{M}_{k}\right] \\
&\quad + \mathbb{E}\left[\left\langle\nabla F(\mathbf{x}(k)),\bm{\xi}(k)\right\rangle|\mathcal{M}_{k}\right] \\
&\le L\mathbb{E}\left[\|\bm{\xi}(k+1)\|\|\bm{\xi}(k)\||\mathcal{M}_{k}\right] + 2np(L+L^{2}\alpha_{k})\nu_{k}^{2} \\
&\le n^{2}p^{2}L\nu_{k+1}\nu_{k} + 2np(L+L^{2}\alpha_{k})\nu_{k}^{2} \\
&\le npL(2+np+2L\alpha_{k})\nu_{k}^{2}.
\end{aligned}
\end{small}
\end{equation}

For part \textit{(d)}, by substituting (\ref{algorithm_compact_y}) into (\ref{algorithm_compact_x}), we get
	\begin{equation}
	\begin{small}
	\begin{aligned}
	\mathbf{x}(k) &= ((1+\alpha_{k}\beta)W-\alpha_{k}\beta I)\mathbf{z}(k) - \alpha_{k}\mathbf{y}(k-1) \\
	&\quad - \alpha_{k}\nabla F(\mathbf{z}(k)).
	\end{aligned}
	\end{small}
	\end{equation}
Since $\mathbf{y}(k-1)$ is uniquely determined by $\mathcal{M}_{k}$, and based on parts \textit{(a)} and \textit{(b)}, we have
\begin{equation}
\begin{small}
\begin{aligned}
&\quad \mathbb{E}\left[\langle\mathbf{x}(k),\bm{\xi}(k)\rangle|\mathcal{M}_{k}\right] \\
&= \mathbb{E}\left[\left\langle ((1+\alpha_{k}\beta)W-\alpha_{k}\beta I)\mathbf{z}(k) - \alpha_{k}\nabla F(\mathbf{z}(k)),\bm{\xi}(k)\right\rangle|\mathcal{M}_{k}\right]\\
&\le 2np\left(\|(1+\alpha_{k}\beta)W-\alpha_{k}\beta I\|+L\alpha_{k}\right)\nu_{k}^{2} \\
&\le 2np\left(3+L\alpha_{k}\right)\nu_{k}^{2}.
\end{aligned}
\end{small}
\end{equation}

For part \textit{(e)}, it follows from (\ref{equation_bar_x}) and parts \textit{(a)} and \textit{(b)} that
\begin{equation}
\begin{small}
\begin{aligned}
&\quad \mathbb{E}\left[\langle\mathbf{1}\bar{\bm{x}}(k),\bm{\xi}(k)\rangle|\mathcal{M}_{k}\right] \\
&= \mathbb{E}\left[\left\langle\frac{1}{n}\mathbf{1}\mathbf{1}^{T}\mathbf{z}(k)-\frac{\alpha_{k}}{n}\mathbf{1}\mathbf{1}^{T}\nabla F(\mathbf{z}(k)),\bm{\xi}(k)\right\rangle|\mathcal{M}_{k}\right] \\
&\le 2np(1+L\alpha_{k})\nu_{k}^{2}.
\end{aligned}
\end{small}
\end{equation}

By the definition of $\mathbf{z}(k)$, part \textit{(f)} can be obtained as follows:
\begin{equation}\label{equation_z_barx}
\begin{small}
\begin{aligned}
&\quad \mathbb{E}\left[\|\mathbf{z}(k+1) - \mathbf{1}\bar{\bm{x}}(k)\|^{2}|\mathcal{M}_{k}\right] \\
&= \mathbb{E}\left[\|\mathbf{x}(k) - \mathbf{1}\bar{\bm{x}}(k) + \bm{\xi}(k+1)\|^{2}|\mathcal{M}_{k}\right] \\
&= \mathbb{E}\left[\|\mathbf{x}(k)-\mathbf{1}\bar{\bm{x}}(k)\|^{2}|\mathcal{M}_{k}\right] + \mathbb{E}\left[\|\bm{\xi}(k+1)\|^{2}|\mathcal{M}_{k}\right] \\
&= \mathbb{E}\left[ \|\mathbf{x}(k) - \mathbf{1}\bar{\bm{x}}(k) \|^{2} | \mathcal{M}_{k} \right] + 2np\nu_{k+1}^{2}.
\end{aligned}
\end{small}
\end{equation}

Similarly, we have
\begin{equation}\label{equation_z_k}
\begin{small}
\begin{aligned}
&\quad \mathbb{E}\left[\|\mathbf{z}(k+1)-\mathbf{z}(k)\|^{2}|\mathcal{M}_{k}\right] \\
&= \mathbb{E}\left[ \|\mathbf{x}(k)-\mathbf{x}(k-1) + \bm{\xi}(k+1)-\bm{\xi}(k)\|^{2}|\mathcal{M}_{k}\right] \\
&= \mathbb{E}\left[\|\mathbf{x}(k)-\mathbf{x}(k-1)\|^{2}|\mathcal{M}_{k}\right] + \mathbb{E}\left[ \| \bm{\xi}(k+1) - \bm{\xi}(k) \|^{2}|\mathcal{M}_{k}\right] \\
&\quad + 2\mathbb{E}\left[\langle\mathbf{x}(k),-\bm{\xi}(k)\rangle|\mathcal{M}_{k}\right] \\
&\le \mathbb{E}\left[\|\mathbf{x}(k)-\mathbf{x}(k-1)\|^{2}|\mathcal{M}_{k}\right] + 2np\nu_{k+1}^{2} \\
&\quad + 2np(7+2L\alpha_{k})\nu_{k}^{2},
\end{aligned}
\end{small}
\end{equation}
where in the first inequality we use part \textit{(d)}.
\end{proof}

\subsection{Proof of Lemma \ref{lemma:first_inequality}}
\begin{proof}
Let $\bm{a}_{1} \triangleq \bar{\bm{x}}(k) - \frac{\alpha_{k+1}}{n}\mathbf{1}^{T}\nabla F(\mathbf{1}\bar{\bm{x}}(k)) - \bm{x}^{*}$, $\bm{b}_{1} \triangleq \frac{\alpha_{k+1}}{n}\mathbf{1}^{T}\left( \nabla F(\mathbf{z}(k+1)) - \nabla F(\mathbf{1}\bar{\bm{x}}(k)) \right)$, and $\bm{c}_{1} \triangleq \frac{1}{n}\mathbf{1}^{T}\bm{\xi}(k+1)$. In light of (\ref{eq:x_x_star}) and Lemma \ref{lemma_smooth_convex}, if $\alpha_{k+1}\le\frac{2}{\mu+L}$ for any $k\ge0$, we have
\begin{equation}\label{eq:first_1}
\begin{small}
\begin{aligned}
&\quad \mathbb{E}\left[\| \bar{\bm{x}}(k+1) - \bm{x}^{*} \|^{2}|\mathcal{M}_{k}\right] \\
&= \mathbb{E}\left[\|\bm{a}_{1}\|^{2}|\mathcal{M}_{k}\right] + \mathbb{E}\left[\|\bm{b}_{1}\|^{2}|\mathcal{M}_{k}\right] + \mathbb{E}\left[\|\bm{c}_{1}\|^{2}|\mathcal{M}_{k}\right] \\
&\quad + \mathbb{E}\left[2\langle \bm{a}_{1},-\bm{b}_{1}\rangle|\mathcal{M}_{k}\right] + \mathbb{E}\left[2\langle -\bm{b}_{1},\bm{c}_{1}\rangle|\mathcal{M}_{k}\right] \\
&\le (1-\alpha_{k+1}\mu)^{2} \mathbb{E}\left[ \|\bar{\bm{x}}(k) - \bm{x}^{*}\|^{2}|\mathcal{M}_{k}\right] \\
&\quad + \frac{L^{2}\alpha_{k+1}^{2}}{n} \mathbb{E}\left[ \|\mathbf{z}(k+1) - \mathbf{1}\bar{\bm{x}}(k)\|^{2} | \mathcal{M}_{k} \right] \\
&\quad + 2p\nu_{k+1}^{2} + \mathbb{E}\left[2\langle \bm{a}_{1},-\bm{b}_{1}\rangle|\mathcal{M}_{k}\right] + \mathbb{E}\left[2\langle -\bm{b}_{1},\bm{c}_{1}\rangle|\mathcal{M}_{k}\right].
\end{aligned}
\end{small}
\end{equation}

We now need to bound the last two terms in the above inequality. By Lemma \ref{lemma_smooth_convex}, we obtain
\begin{equation}
\begin{small}
\begin{aligned}
&\quad 2 \langle \bm{a}_{1}, -\bm{b}_{1} \rangle \\
&\le 2 \Big\| \bar{\bm{x}}(k) - \frac{\alpha_{k+1}}{n}\mathbf{1}^{T}\nabla F(\mathbf{1}\bar{\bm{x}}(k)) - \bm{x}^{*} \Big\| \| \bm{b}_{1} \| \\
&\le 2(1 - \alpha_{k+1}\mu)\| \bar{\bm{x}}(k) - \bm{x}^{*} \|\| \bm{b}_{1} \| \\
&\le (1 - \alpha_{k+1}\mu) \Big( \alpha_{k+1}\mu \| \bar{\bm{x}}(k) - \bm{x}^{*} \|^{2} + \frac{1}{\alpha_{k+1}\mu}\| \bm{b}_{1} \|^{2} \Big) \\
&\le \alpha_{k+1}\mu(1 - \alpha_{k+1}\mu) \| \bar{\bm{x}}(k) - \bm{x}^{*} \|^{2} \\
&\quad +  \frac{L^{2}\alpha_{k+1}(1- \alpha_{k+1}\mu)}{n\mu}\| \mathbf{z}(k+1) - \mathbf{1}\bar{\bm{x}}(k) \|^{2},
\end{aligned}
\end{small}
\end{equation}
where in the third inequality we use Young's inequality. Then, taking the conditional expectation, we get
\begin{equation}\label{eq:first_2}
\begin{small}
\begin{aligned}
&\quad \mathbb{E}\left[ 2 \langle \bm{a}_{1}, -\bm{b}_{1} \rangle | \mathcal{M}_{k} \right] \\
&\le \alpha_{k+1}\mu(1 - \alpha_{k+1}\mu) \mathbb{E}\left[ \| \bar{\bm{x}}(k) - \bm{x}^{*} \|^{2} | \mathcal{M}_{k} \right] \\
&\quad + \frac{L^{2}\alpha_{k+1}(1 - \alpha_{k+1}\mu)}{n\mu} \mathbb{E}\left[ \| \mathbf{z}(k+1) - \mathbf{1}\bar{\bm{x}}(k) \|^{2} | \mathcal{M}_{k} \right].
\end{aligned}
\end{small}
\end{equation}

For the last term in (\ref{eq:first_1}), since $\mathbf{x}(k)$ is independent of $\bm{\xi}(k+1)$, we have
\begin{equation}\label{eq:first_3}
\begin{small}
\begin{aligned}
&\quad \mathbb{E}\left[ 2 \langle -\bm{b}_{1}, \bm{c}_{1} \rangle | \mathcal{M}_{k} \right] \\
&= \frac{2\alpha_{k+1}}{n^{2}} \mathbb{E}\left[ \left\langle -\mathbf{1}^{T}\nabla F(\mathbf{z}(k+1)), \mathbf{1}^{T}\bm{\xi}(k+1) \right\rangle | \mathcal{M}_{k} \right] \\
&\le 4pL\alpha_{k+1}\nu_{k+1}^{2},
\end{aligned}
\end{small}
\end{equation}
where the first inequality follows from the part \textit{(b)} of Lemma \ref{lemma:xi}. Finally, combining (\ref{eq:first_1}), (\ref{eq:first_2}), and (\ref{eq:first_3}) gives
\begin{equation}
\begin{small}
\begin{aligned}
&\quad \mathbb{E}\left[ \|\bar{\bm{x}}(k+1) - \bm{x}^{*}\|^{2} | \mathcal{M}_{k} \right] \\
&\le (1-\alpha_{k+1}\mu) \mathbb{E}\left[ \|\bar{\bm{x}}(k) - \bm{x}^{*}\|^{2} | \mathcal{M}_{k}  \right] \\
&\quad + \frac{L^{2}\alpha_{k+1}}{n\mu} \mathbb{E}\left[  \|\mathbf{z}(k+1) - \mathbf{1}\bar{\bm{x}}(k)\|^{2} | \mathcal{M}_{k} \right] \\
&\quad + (2p+4pL\alpha_{k+1})\nu_{k+1}^{2} \\
&\le (1-\alpha_{k+1}\mu) \mathbb{E}\left[ \|\bar{\bm{x}}(k) - \bm{x}^{*}\|^{2} | \mathcal{M}_{k}  \right] \\
&\quad + \frac{L^{2}\alpha_{k+1}}{n\mu} \mathbb{E}\left[  \|\mathbf{x}(k) - \mathbf{1}\bar{\bm{x}}(k)\|^{2} | \mathcal{M}_{k} \right] \\
&\quad + 2p(1+2L\alpha_{k+1}+\frac{L^{2}\alpha_{k+1}}{\mu})\nu_{k+1}^{2},
\end{aligned}
\end{small}
\end{equation}
where the second inequality follows from the part \textit{(f)} of Lemma \ref{lemma:xi}. Taking full expectation on both sides of the above inequality completes the proof.
\end{proof}

\subsection{Proof of Lemma \ref{lemma:second_inequality}}
\begin{proof}
Let $\bm{a}_{2} \triangleq \tilde{W}(k)\mathbf{x}(k) - \mathbf{1}\bar{\bm{x}}(k)$,
$\bm{b}_{2} \triangleq \frac{\alpha_{k+1}}{\alpha_{k}}W(\mathbf{x}(k) - \mathbf{x}(k-1))$, $\bm{c}_{2} \triangleq \alpha_{k+1}(\nabla F(\mathbf{z}(k+1)) - \nabla F(\mathbf{z}(k))) - \frac{\alpha_{k+1}}{n}\mathbf{1}\mathbf{1}^{T}\nabla F(\mathbf{z}(k+1))$,
and $\bm{d}_{2} \triangleq \left( (1+\alpha_{k+1}\beta)W - \alpha_{k+1}\beta I - \frac{1}{n}\mathbf{1}\mathbf{1}^{T} \right)\bm{\xi}(k+1) - \frac{\alpha_{k+1}}{\alpha_{k}}W\bm{\xi}(k)$. In light of (\ref{eq:x_x_bar}), we have
\begin{equation}\label{eq:x_x_bar_initial}
\begin{small}
\begin{aligned}
&\quad \mathbb{E}\left[\|\mathbf{x}(k+1) - \mathbf{1}\bar{\bm{x}}(k+1)\|^{2}|\mathcal{M}_{k}\right] \\
&= \mathbb{E}\left[\|\bm{a}_{2}\|^{2}|\mathcal{M}_{k}\right] + \mathbb{E}\left[\|\bm{b}_{2}\|^{2}|\mathcal{M}_{k}\right] + \mathbb{E}\left[\|\bm{c}_{2}\|^{2}|\mathcal{M}_{k}\right] \\
&\quad + \mathbb{E}\left[\|\bm{d}_{2}\|^{2}|\mathcal{M}_{k}\right] + \mathbb{E}\left[2\langle \bm{a}_{2},\bm{b}_{2}\rangle|\mathcal{M}_{k}\right] + \mathbb{E}\left[2\langle \bm{a}_{2},-\bm{c}_{2}\rangle|\mathcal{M}_{k}\right] \\
&\quad + \mathbb{E}\left[2\langle \bm{b}_{2},-\bm{c}_{2}\rangle|\mathcal{M}_{k}\right] + \mathbb{E}\left[2\langle \bm{a}_{2},\bm{d}_{2}\rangle|\mathcal{M}_{k}\right] \\
&\quad + \mathbb{E}\left[2\langle \bm{b}_{2},\bm{d}_{2}\rangle|\mathcal{M}_{k}\right] + \mathbb{E}\left[2\langle -\bm{c}_{2},\bm{d}_{2}\rangle|\mathcal{M}_{k}\right].
\end{aligned}
\end{small}
\end{equation}

Next, we will bound each term on the right-hand side of the above equality, respectively. From Remark \ref{remark:W_tilde}, it follows that $\tilde{W}(k)$ is doubly-stochastic and $\sigma_{k} = \| \tilde{W}(k) - \frac{1}{n}\mathbf{1}\mathbf{1}^{T} \| < 1$ for all $k\ge1$. Thus, we have
\begin{equation}\label{eq:sigma}
\begin{small}
\begin{aligned}
\mathbb{E}\left[ \|\bm{a}_{2}\|^{2} | \mathcal{M}_{k} \right] &= \mathbb{E}\Big[ \Big\| \Big(\tilde{W}(k) - \frac{1}{n}\mathbf{1}\mathbf{1}^{T}\Big)(\mathbf{x}(k) - \mathbf{1}\bar{\bm{x}}(k)) \Big\|^{2} | \mathcal{M}_{k} \Big] \\
&\le \sigma^{2} \mathbb{E}\left[ \|\mathbf{x}(k) - \mathbf{1}\bar{\bm{x}}(k)\|^{2} | \mathcal{M}_{k} \right],
\end{aligned}
\end{small}
\end{equation}
where $\sigma \triangleq \max_{k\ge1}\{\sigma_{k}\} < 1$. For the second term in (\ref{eq:x_x_bar_initial}), we have
\begin{equation}\label{eq:second_b}
\begin{small}
\begin{aligned}
\mathbb{E}\left[ \|\bm{b}_{2}\|^{2} | \mathcal{M}_{k} \right] &\le q_{1}^{2}\mathbb{E}\left[ \|\mathbf{x}(k) - \mathbf{x}(k-1) \|^{2} | \mathcal{M}_{k} \right].
\end{aligned}
\end{small}
\end{equation}

It follows from Lemma \ref{lemma_smooth_convex} that
\begin{equation}\label{equation_nabla_F}
\begin{footnotesize}
\begin{aligned}
&\quad \Big\|\frac{1}{n}\bm{1}^{T}\nabla F(\mathbf{z}(k+1))\Big\|^{2} \\
&= \Big\|\frac{1}{n}\bm{1}^{T}\nabla F(\mathbf{z}(k+1))-\frac{1}{n}\bm{1}^{T}\nabla F(\mathbf{1}\bar{\bm{x}}(k))+\frac{1}{n}\bm{1}^{T}\nabla F(\mathbf{1}\bar{\bm{x}}(k)) \Big\|^{2} \\
&\le 2\Big\|\frac{1}{n}\bm{1}^{T}\nabla F(\mathbf{z}(k+1)) - \frac{1}{n}\bm{1}^{T}\nabla F(\mathbf{1}\bar{\bm{x}}(k)) \Big\|^{2} \\
&\quad + 2\Big\|\frac{1}{n}\bm{1}^{T}\nabla F(\mathbf{1}\bar{\bm{x}}(k)) \Big\|^{2} \\
&\le \frac{2L^{2}}{n}\| \mathbf{z}(k+1) - \mathbf{1}\bar{\bm{x}}(k) \|^{2} + 2L^{2}\| \bar{\bm{x}}(k) - \bm{x}^{*} \|^{2}.
\end{aligned}
\end{footnotesize}
\end{equation}
Then, we have
\begin{equation}\label{eq:second_c}
\begin{small}
\begin{aligned}
&\quad \mathbb{E}\left[ \|\bm{c}_{2}\|^{2} | \mathcal{M}_{k} \right] \\
&\le 2\alpha_{k+1}^{2} \mathbb{E}\left[ \left\| \nabla F(\mathbf{z}(k+1)) - \nabla F(\mathbf{z}(k)) \right\|^{2} | \mathcal{M}_{k} \right] \\
&\quad + 2n\alpha_{k+1}^{2} \mathbb{E}\Big[ \Big\|\frac{1}{n}\bm{1}^{T}\nabla F(\mathbf{z}(k+1))\Big\|^{2} | \mathcal{M}_{k} \Big] \\
&\le 2L^{2}\alpha_{k+1}^{2} \mathbb{E}\left[ \left\|\mathbf{z}(k+1) - \mathbf{z}(k)\right\|^{2} | \mathcal{M}_{k}\right] \\
&\quad + 4L^{2}\alpha_{k+1}^{2} \mathbb{E}\left[\left\|\mathbf{z}(k+1) - \mathbf{1}\bar{\bm{x}}(k)\right\|^{2}| \mathcal{M}_{k}\right] \\
&\quad + 4nL^{2}\alpha_{k+1}^{2}\mathbb{E}\left[ \left\|\bar{\bm{x}}(k) - \bm{x}^{*}\right\|^{2}| \mathcal{M}_{k}\right] \\
&\le 2L^{2}\alpha_{k+1}^{2}\mathbb{E}\left[\|\mathbf{x}(k)-\mathbf{x}(k-1)\|^{2}|\mathcal{M}_{k}\right] \\
&\quad + 4L^{2}\alpha_{k+1}^{2}\mathbb{E}\left[\left\|\mathbf{x}(k)-\mathbf{1}\bar{\bm{x}}(k)\right\|^{2}|\mathcal{M}_{k}\right] \\
&\quad  + 4nL^{2}\alpha_{k+1}^{2}\mathbb{E}\left[\left\|\bar{\bm{x}}(k)-\bm{x}^{*}\right\|^{2}|\mathcal{M}_{k}\right] \\
&\quad + 12npL^{2}\alpha_{k+1}^{2}\nu_{k+1}^{2} + 4npL^{2}(7+2L\alpha_{k})\alpha_{k+1}^{2}\nu_{k}^{2},
\end{aligned}
\end{small}
\end{equation}
where the last inequality follows from the parts \textit{(f)} and \textit{(g)} of Lemma \ref{lemma:xi}.

For the fourth term of (\ref{eq:x_x_bar_initial}), since $\bm{\xi}(k+1)$ and $\bm{\xi}(k)$ are independent of each other, we have
\begin{equation}
\begin{small}
\begin{aligned}
\mathbb{E}\left[ \|\bm{d}_{2}\|^{2}|\mathcal{M}_{k}\right]
&\le 9\mathbb{E}\left[\left\|\bm{\xi}(k+1)\right\|^{2}|\mathcal{M}_{k}\right] + \mathbb{E}\left[ \left\|\bm{\xi}(k)\right\|^{2}|\mathcal{M}_{k}\right] \\
&= 18np\nu_{k+1}^{2} + 2np\nu_{k}^{2}.
\end{aligned}
\end{small}
\end{equation}

Similar to the proof in (\ref{eq:sigma}), it follows that
\begin{equation}
\begin{footnotesize}
\begin{aligned}
&\quad \mathbb{E}\left[ 2\langle \bm{a}_{2},\bm{b}_{2}\rangle |\mathcal{M}_{k}\right] \\
&\le 2\sigma \mathbb{E}\left[ \|\mathbf{x}(k) - \mathbf{1}\bar{\bm{x}}(k)\|\| \bm{b}_{2} \| |\mathcal{M}_{k}\right] \\
&\le \sigma \Big( \frac{1-\sigma^{2}}{3\sigma} \mathbb{E}\left[ \| \mathbf{x}(k) - \mathbf{1}\bar{\bm{x}}(k) \|^{2} |\mathcal{M}_{k}\right] + \frac{3\sigma}{1-\sigma^{2}} \mathbb{E}\left[ \|\bm{b}_{2}\|^{2} |\mathcal{M}_{k}\right] \Big) \\
&= \frac{1-\sigma^{2}}{3} \mathbb{E}\left[ \|\mathbf{x}(k) - \mathbf{1}\bar{\bm{x}}(k)\|^{2} |\mathcal{M}_{k}\right] + \frac{3\sigma^{2}}{1-\sigma^{2}} \mathbb{E}\left[ \|\bm{b}_{2}\|^{2} |\mathcal{M}_{k}\right],
\end{aligned}
\end{footnotesize}
\end{equation}
where in the second inequality we use Young's Inequality.

Similarly, we have
\begin{equation}
\begin{small}
\begin{aligned}
&\quad\mathbb{E}\left[2\langle \bm{a}_{2},-\bm{c}_{2}\rangle|\mathcal{M}_{k}\right] \\
&\le \frac{1 - \sigma^{2}}{3}\mathbb{E}\left[\|\mathbf{x}(k) - \mathbf{1}\bar{\bm{x}}(k)\|^{2}|\mathcal{M}_{k}\right] + \frac{3\sigma^{2}}{1 - \sigma^{2}}\mathbb{E}\left[\|\bm{c}_{2}\|^{2}|\mathcal{M}_{k}\right],
\end{aligned}
\end{small}
\end{equation}
and
\begin{equation}
\begin{small}
\begin{aligned}
\mathbb{E}\left[ 2\langle \bm{b}_{2},-\bm{c}_{2}\rangle|\mathcal{M}_{k} \right] &\le \mathbb{E}\left[  \|\bm{b}_{2}\|^{2} | \mathcal{M}_{k} \right] + \mathbb{E}\left[ \|\bm{c}_{2}\|^{2} | \mathcal{M}_{k} \right].
\end{aligned}
\end{small}
\end{equation}

For the last three terms of (\ref{eq:x_x_bar_initial}), it follows from Lemma \ref{lemma:xi} that
\begin{equation}
\begin{small}
\begin{aligned}
\mathbb{E}\left[ 2\langle \bm{a}_{2},\bm{d}_{2}\rangle|\mathcal{M}_{k}\right] &= \frac{2\alpha_{k+1}}{\alpha_{k}}\mathbb{E}\left[ \Big\langle \tilde{W}(k)\mathbf{x}(k),-W\bm{\xi}(k) \Big\rangle|\mathcal{M}_{k}\right] \\
&\quad + \frac{2\alpha_{k+1}}{\alpha_{k}}\mathbb{E}\left[ \left\langle\mathbf{1}\bar{\bm{x}}(k),W\bm{\xi}(k)\right\rangle|\mathcal{M}_{k}\right]\\
&\le 8np(2+L\alpha_{k})\nu_{k}^{2},
\end{aligned}
\end{small}
\end{equation}
\begin{equation}
\begin{small}
\begin{aligned}
\mathbb{E}\left[ 2\langle \bm{b}_{2},\bm{d}_{2}\rangle|\mathcal{M}_{k}\right] &= \frac{2\alpha_{k+1}^{2}}{\alpha_{k}^{2}} \mathbb{E}\left[\left\langle W\mathbf{x}(k), -W\bm{\xi}(k)\right\rangle|\mathcal{M}_{k}\right] \\
&\le 4np(3+L\alpha_{k})\nu_{k}^{2},
\end{aligned}
\end{small}
\end{equation}
\begin{equation}\label{eq:second_cd}
\begin{small}
\begin{aligned}
&\quad \mathbb{E}\left[ 2\langle -\bm{c}_{2},\bm{d}_{2}\rangle|\mathcal{M}_{k}\right] \\
&= 2\alpha_{k+1}\mathbb{E}\Big[ \Big\langle (\frac{1}{n}\mathbf{1}\mathbf{1}^{T}-I)\nabla F(\mathbf{z}(k+1)),\\
&\qquad \Big( (1+\alpha_{k+1}\beta)W - \alpha_{k+1}\beta I - \frac{1}{n}\mathbf{1}\mathbf{1}^{T}\Big)\bm{\xi}(k+1)\Big\rangle|\mathcal{M}_{k}\Big] \\
&\quad + 2\frac{\alpha_{k+1}^{2}}{\alpha_{k}}\mathbb{E}\Big[ \Big\langle(\frac{1}{n}\mathbf{1}\mathbf{1}^{T}-I)\nabla F(\mathbf{z}(k+1)),-W\bm{\xi}(k) \Big\rangle|\mathcal{M}_{k}\Big]\\
&\quad + 2\frac{\alpha_{k+1}^{2}}{\alpha_{k}}\mathbb{E}\left[ \left\langle \nabla F(\mathbf{z}(k)),-W\bm{\xi}(k)\right\rangle|\mathcal{M}_{k}\right] \\
&\le 12npL\alpha_{k+1}\nu_{k+1}^{2} + 2npL(4+np+2L\alpha_{k})\alpha_{k+1}\nu_{k}^{2}.
\end{aligned}
\end{small}
\end{equation}

Finally, combining (\ref{eq:x_x_bar_initial})-(\ref{eq:second_b}) and (\ref{eq:second_c})-(\ref{eq:second_cd}), we have
\begin{equation}
\begin{small}
\begin{aligned}
&\quad\mathbb{E}\left[\|\mathbf{x}(k+1) - \mathbf{1}\bar{\bm{x}}(k+1)\|^{2}|\mathcal{M}_{k}\right] \\
&\le \frac{2+\sigma^{2}}{3}\mathbb{E}\left[\|\mathbf{x}(k) - \mathbf{1}\bar{\bm{x}}(k)\|^{2}|\mathcal{M}_{k}\right] + \frac{2+\sigma^{2}}{1-\sigma^{2}}\mathbb{E}\left[ \|\bm{b}_{2}\|^{2}|\mathcal{M}_{k}\right] \\
&\quad + \frac{2+\sigma^{2}}{1-\sigma^{2}}\mathbb{E}\left[\|\bm{c}_{2}\|^{2}|\mathcal{M}_{k}\right] + 2np(9+6L\alpha_{k+1})\nu_{k+1}^{2} \\
&\quad + 2np(15+6L\alpha_{k}+4L\alpha_{k+1}+npL\alpha_{k+1}+2L^{2}\alpha_{k}\alpha_{k+1})\nu_{k}^{2} \\
&\le \Big(\frac{2+\sigma^{2}}{3}+\frac{4(2+\sigma^{2})L^{2}\alpha_{k+1}^{2}}{1-\sigma^{2}}\Big)\mathbb{E}\left[\|\mathbf{x}(k) - \mathbf{1}\bar{\bm{x}}(k)\|^{2}|\mathcal{M}_{k}\right] \\
&\quad + \frac{4n(2+\sigma^{2})L^{2}\alpha_{k+1}^{2}}{1-\sigma^{2}}\mathbb{E}\left[\left\|\bar{\bm{x}}(k)-\bm{x}^{*}\right\|^{2}|\mathcal{M}_{k}\right] \\
&\quad + \frac{(2+\sigma^{2})(q_{1}^{2}+2L^{2}\alpha_{k+1}^{2})}{1-\sigma^{2}}\mathbb{E}\left[\|\mathbf{x}(k)-\mathbf{x}(k-1)\|^{2}|\mathcal{M}_{k} \right] \\
&\quad + 2np\Big(9+6L\alpha_{k+1}+\frac{18L^{2}\alpha_{k+1}^{2}}{1-\sigma^{2}}\Big)\nu_{k+1}^{2} \\
&\quad + 2np\Big(15+6L\alpha_{k}+4L\alpha_{k+1}+npL\alpha_{k+1}+2L^{2}\alpha_{k}\alpha_{k+1}\\
&\qquad +\frac{6L^{2}(7+2L\alpha_{k})\alpha_{k+1}^{2}}{1-\sigma^{2}}\Big)\nu_{k}^{2}.
\end{aligned}
\end{small}
\end{equation}
Taking the full expectation for the above inequality gives the desired result.
\end{proof}

\subsection{Proof of Lemma \ref{lemma:third_inequality}}
\begin{proof}
In light of (\ref{eq:x_x_k}), and let
$\bm{a}_{3} \triangleq \frac{\alpha_{k+1}}{\alpha_{k}}W(\mathbf{x}(k) - \mathbf{x}(k-1))$,
$\bm{b}_{3} \triangleq (1-\frac{\alpha_{k+1}}{\alpha_{k}}+\alpha_{k+1}\beta)\left(W-I\right)\mathbf{x}(k)$,
$\bm{c}_{3} \triangleq \alpha_{k+1}(\nabla F(\mathbf{z}(k+1)) - \nabla F(\mathbf{z}(k)))$,
$\bm{d}_{3} \triangleq \left((1+\alpha_{k+1}\beta)W - \alpha_{k+1}\beta I\right)\bm{\xi}(k+1) - \frac{\alpha_{k+1}}{\alpha_{k}}W\bm{\xi}(k)$, we have

\begin{equation}\label{eq:x_x_k_initial}
\begin{small}
\begin{aligned}
&\quad \mathbb{E}\left[\|\mathbf{x}(k+1) - \mathbf{x}(k)\|^{2}|\mathcal{M}_{k}\right] \\
&= \mathbb{E}\left[\|\bm{a}_{3}\|^{2}|\mathcal{M}_{k}\right] + \mathbb{E}\left[\|\bm{b}_{3}\|^{2}|\mathcal{M}_{k}\right] + \mathbb{E}\left[\|\bm{c}_{3}\|^{2}|\mathcal{M}_{k}\right] \\
&\quad + \mathbb{E}\left[\|\bm{d}_{3}\|^{2}|\mathcal{M}_{k}\right] + \mathbb{E}\left[2\langle \bm{a}_{3},\bm{b}_{3}\rangle|\mathcal{M}_{k}\right] + \mathbb{E}\left[2\langle \bm{a}_{3},-\bm{c}_{3}\rangle|\mathcal{M}_{k}\right] \\
&\quad + \mathbb{E}\left[2\langle \bm{b}_{3},-\bm{c}_{3}\rangle|\mathcal{M}_{k}\right] + \mathbb{E}\left[2\langle \bm{a}_{3},\bm{d}_{3}\rangle|\mathcal{M}_{k}\right] \\
&\quad + \mathbb{E}\left[2\langle \bm{b}_{3},\bm{d}_{3}\rangle|\mathcal{M}_{k}\right] + \mathbb{E}\left[2\langle -\bm{c}_{3},\bm{d}_{3}\rangle|\mathcal{M}_{k}\right].
\end{aligned}
\end{small}
\end{equation}


Similar to the proof of Lemma \ref{lemma:second_inequality}, we will bound each term on the right-hand side of the above equality. For the first term, it follows from (\ref{equation_bar_x}) that
\begin{equation}\label{equation_W_x}
\begin{small}
\begin{aligned}
&\quad W(\mathbf{x}(k) - \mathbf{x}(k-1)) \\
&= \Big(W - \frac{1}{n}\mathbf{1}\mathbf{1}^{T}\Big)(\mathbf{x}(k) - \mathbf{x}(k-1)) + \mathbf{1}\bar{\bm{x}}(k) - \mathbf{1}\bar{\bm{x}}(k-1) \\
&= \Big(W - \frac{1}{n}\mathbf{1}\mathbf{1}^{T}\Big)(\mathbf{x}(k) - \mathbf{x}(k-1)) - \frac{\alpha_{k}}{n}\mathbf{1}\mathbf{1}^{T} \nabla F(\mathbf{z}(k)) \\
&\quad + \frac{1}{n}\mathbf{1}\mathbf{1}^{T}\bm{\xi}(k).
\end{aligned}
\end{small}
\end{equation}

Then, we obtain
\begin{equation}\label{ieq:third_a}
\begin{footnotesize}
\begin{aligned}
&\quad \mathbb{E}\left[\|\bm{a}_{3} \|^{2}|\mathcal{M}_{k}\right] \\
&\le \sigma^{2}\mathbb{E}\left[\|\mathbf{x}(k)-\mathbf{x}(k-1)\|^{2}|\mathcal{M}_{k}\right] + \mathbb{E}\Big[\Big\| \frac{1}{n}\mathbf{1}\mathbf{1}^{T}\bm{\xi}(k)\Big\|^{2}|\mathcal{M}_{k}\Big] \\
&\quad + \alpha_{k+1}^{2}\mathbb{E}\Big[\Big\|\frac{1}{n}\mathbf{1}\mathbf{1}^{T} \nabla F(\mathbf{z}(k))\Big\|^{2}|\mathcal{M}_{k}\Big] \\
&\quad + 2q_{1}\mathbb{E}\Big[\Big\langle(W-\frac{1}{n}\mathbf{1}\mathbf{1}^{T})(\mathbf{x}(k)-\mathbf{x}(k-1)), \\
&\qquad -\frac{\alpha_{k+1}}{n}\mathbf{1}\mathbf{1}^{T}\nabla F(\mathbf{z}(k))\Big\rangle|\mathcal{M}_{k}\Big] \\
&\quad + 2q_{1}^{2}\mathbb{E}\Big[\Big\langle(W-\frac{1}{n}\mathbf{1}\mathbf{1}^{T})\mathbf{x}(k),\frac{1}{n}\mathbf{1}\mathbf{1}^{T}\bm{\xi}(k)\Big\rangle|\mathcal{M}_{k}\Big] \\
&\quad + 2q_{1}\mathbb{E}\Big[\Big\langle-\frac{\alpha_{k+1}}{n}\mathbf{1}\mathbf{1}^{T}\nabla F(\mathbf{z}(k)),\frac{1}{n}\mathbf{1}\mathbf{1}^{T}\bm{\xi}(k)\Big\rangle|\mathcal{M}_{k}\Big] \\
&\le \sigma^{2}\mathbb{E}\left[\|\mathbf{x}(k)-\mathbf{x}(k-1)\|^{2}|\mathcal{M}_{k}\right] + \mathbb{E}\Big[\Big\|\frac{\alpha_{k+1}}{n}\mathbf{1}\mathbf{1}^{T} \nabla F(\mathbf{z}(k))\Big\|^{2}|\mathcal{M}_{k}\Big] \\
&\quad + \sigma\Big( \frac{1 - \sigma^{2}}{4\sigma}\mathbb{E}\left[\|\mathbf{x}(k) -\mathbf{x}(k-1)\|^{2}|\mathcal{M}_{k}\right] \\
&\qquad + \frac{4\sigma}{1 - \sigma^{2}} \mathbb{E}\Big[ \left\| \frac{\alpha_{k+1}}{n}\mathbf{1}\mathbf{1}^{T} \nabla F(\mathbf{z}(k))\right\|^{2}|\mathcal{M}_{k}\Big] \Big)\\
&\quad + 2np(7+2L\alpha_{k}+2L\alpha_{k+1})\nu_{k}^{2} \\
&= \frac{1+3\sigma^{2}}{4}\mathbb{E}\left[\|\mathbf{x}(k) -\mathbf{x}(k-1)\|^{2}|\mathcal{M}_{k}\right] \\
&\quad + \frac{1+3\sigma^{2}}{1-\sigma^{2}}\mathbb{E}\Big[ \Big\| \frac{\alpha_{k+1}}{n}\mathbf{1}\mathbf{1}^{T} \nabla F(\mathbf{z}(k))\Big\|^{2}|\mathcal{M}_{k}\Big] \\
&\quad + 2np(7+2L\alpha_{k}+2L\alpha_{k+1})\nu_{k}^{2},
\end{aligned}
\end{footnotesize}
\end{equation}
where the first inequality is due to the fact that $\|W - \frac{1}{n}\mathbf{1}\mathbf{1}^{T}\|\le\sigma<1$, and the second inequality follows from Young's Inequality as well as parts \textit{(b)} and \textit{(d)} of Lemma \ref{lemma:xi}.

For the second term in (\ref{eq:x_x_k_initial}), we have
\begin{equation}
\begin{small}
\begin{aligned}
&\quad \mathbb{E}\left[\|\bm{b}_{3}\|^{2}|\mathcal{M}_{k}\right] \\
& = \left(1-q_{1}+\alpha_{k+1}\beta\right)^{2}\mathbb{E}\left[\left\|(W-I)(\mathbf{x}(k)-\mathbf{1}\bar{\bm{x}}(k)) \right\|^{2}|\mathcal{M}_{k}\right] \\
& \le \|W-I\|^{2}\mathbb{E}\left[\|\mathbf{x}(k)-\mathbf{1}\bar{\bm{x}}(k)\|^{2}|\mathcal{M}_{k}\right].
\end{aligned}
\end{small}
\end{equation}

By part \textit{(g)} of Lemma \ref{lemma:xi}, we obtain
\begin{equation}
\begin{footnotesize}
\begin{aligned}
\mathbb{E}\left[\|\bm{c}_{3}\|^{2}|\mathcal{M}_{k}\right]
&\le L^{2}\alpha_{k+1}^{2}\mathbb{E}\left[\|\mathbf{z}(k+1)-\mathbf{z}(k)\|^{2}|\mathcal{M}_{k}\right] \\
&\le L^{2}\alpha_{k+1}^{2}\mathbb{E}\left[\|\mathbf{x}(k) - \mathbf{x}(k-1)\|^{2}|\mathcal{M}_{k}\right] \\
&\quad + 2npL^{2}\alpha_{k+1}^{2}\nu_{k+1}^{2} + 2npL^{2}(7+2L\alpha_{k})\alpha_{k+1}^{2}\nu_{k}^{2}.
\end{aligned}
\end{footnotesize}
\end{equation}

For the fourth term in (\ref{eq:x_x_k_initial}), since $\bm{\xi}(k+1)$ and $\bm{\xi}(k)$ are independent of each other, we have
\begin{equation}
\begin{footnotesize}
\begin{aligned}
\mathbb{E}\left[\|\bm{d}_{3}\|^{2}|\mathcal{M}_{k}\right]
&= \mathbb{E}\left[\left\|\left((1+\alpha_{k+1}\beta)W - \alpha_{k+1}\beta I\right)\bm{\xi}(k+1)\right\|^{2}|\mathcal{M}_{k}\right] \\
&\quad + \frac{\alpha_{k+1}^{2}}{\alpha_{k}^{2}}\mathbb{E}\left[\left\|W\bm{\xi}(k)\right\|^{2}|\mathcal{M}_{k}\right]\\
&\le 18np\nu_{k+1}^{2} + 2np\nu_{k}^{2}.
\end{aligned}
\end{footnotesize}
\end{equation}

For the fifth term in (\ref{eq:x_x_k_initial}), it follows from (\ref{equation_W_x}) that
\begin{equation}\label{equation_a3_b3}
\begin{footnotesize}
\begin{aligned}
&\quad \mathbb{E}\left[2\langle \bm{a}_{3},\bm{b}_{3}\rangle|\mathcal{M}_{k}\right] \\
&= 2q_{1}\mathbb{E}\Big[\Big\langle(W - \frac{1}{n}\mathbf{1}\mathbf{1}^{T})(\mathbf{x}(k) - \mathbf{x}(k-1)), \bm{b}_{3}\Big\rangle|\mathcal{M}_{k}\Big] \\
&\quad + 2\mathbb{E}\left[\left\langle-\frac{\alpha_{k+1}}{n}\mathbf{1}\mathbf{1}^{T} \nabla F(\mathbf{z}(k)), \bm{b}_{3}\right\rangle|\mathcal{M}_{k}\right] \\
&\quad + 2q_{1}\mathbb{E}\Big[\Big\langle\frac{1}{n}\mathbf{1}\mathbf{1}^{T}\bm{\xi}(k), (\tilde{W}(k) - I)\mathbf{x}(k)\Big\rangle|\mathcal{M}_{k}\Big] \\
&\le \sigma\Big(\frac{1 - \sigma^{2}}{4\sigma}\mathbb{E}\left[\|\mathbf{x}(k)-\mathbf{x}(k-1) \|^{2}|\mathcal{M}_{k}\right] + \frac{4\sigma}{1 - \sigma^{2}}\mathbb{E}\left[\| \bm{b}_{3}\|^{2}|\mathcal{M}_{k}\right]\Big) \\
&\quad + \mathbb{E}\left[\| \bm{b}_{3}\|^{2}|\mathcal{M}_{k}\right] + \mathbb{E}\Big[ \Big\| \frac{\alpha_{k+1}}{n}\mathbf{1}\mathbf{1}^{T} \nabla F(\mathbf{z}(k))\Big\|^{2}|\mathcal{M}_{k}\Big] \\
&\quad + 8np(3+L\alpha_{k})\nu_{k}^{2}\\
&= \frac{1 - \sigma^{2}}{4}\mathbb{E}\left[\|\mathbf{x}(k)-\mathbf{x}(k-1) \|^{2}|\mathcal{M}_{k}\right] + \frac{1+3\sigma^{2}}{1-\sigma^{2}}\mathbb{E}\left[\| \bm{b}_{3}\|^{2}|\mathcal{M}_{k}\right] \\
&\quad + \mathbb{E}\Big[ \Big\| \frac{\alpha_{k+1}}{n}\mathbf{1}\mathbf{1}^{T} \nabla F(\mathbf{z}(k))\Big\|^{2}|\mathcal{M}_{k}\Big] + 8np(3+L\alpha_{k})\nu_{k}^{2},
\end{aligned}
\end{footnotesize}
\end{equation}
where the first inequality follows from Young's Inequality and part \textit{(d)} of Lemma \ref{lemma:xi}.

Similar to the proof in (\ref{equation_a3_b3}), we have
\begin{equation}
\begin{footnotesize}
\begin{aligned}
&\quad \mathbb{E}\left[2\langle \bm{a}_{3},-\bm{c}_{3}\rangle|\mathcal{M}_{k}\right] \\
&= 2q_{1}\mathbb{E}\Big[\Big\langle(W - \frac{1}{n}\mathbf{1}\mathbf{1}^{T})(\mathbf{x}(k) - \mathbf{x}(k-1)), -\bm{c}_{3}\Big\rangle|\mathcal{M}_{k}\Big] \\
&\quad + 2\mathbb{E}\left[\left\langle\frac{\alpha_{k+1}}{n}\mathbf{1}\mathbf{1}^{T} \nabla F(\mathbf{z}(k)), \bm{c}_{3}\right\rangle|\mathcal{M}_{k}\right] \\
&\quad + 2q_{1}\alpha_{k+1}\mathbb{E}\Big[\Big\langle\frac{1}{n}\mathbf{1}\mathbf{1}^{T}\bm{\xi}(k), \nabla F(\mathbf{z}(k))-\nabla F(\mathbf{z}(k+1))\Big\rangle|\mathcal{M}_{k}\Big] \\
&\le \sigma\Big(\frac{1 - \sigma^{2}}{4\sigma}\mathbb{E}\left[\|\mathbf{x}(k)-\mathbf{x}(k-1) \|^{2}|\mathcal{M}_{k}\right] + \frac{4\sigma}{1 - \sigma^{2}}\mathbb{E}\left[\|\bm{c}_{3}\|^{2}|\mathcal{M}_{k}\right]\Big) \\
&\quad + \mathbb{E}\left[ \left\| \frac{\alpha_{k+1}}{n}\mathbf{1}\mathbf{1}^{T} \nabla F(\mathbf{z}(k))\right\|^{2}|\mathcal{M}_{k}\right] + \mathbb{E}\left[\|\bm{c}_{3}\|^{2}|\mathcal{M}_{k}\right] \\
&\quad + 2npL(4+np+2L\alpha_{k})\alpha_{k+1}\nu_{k}^{2} \\
&= \frac{1 - \sigma^{2}}{4}\mathbb{E}\left[\|\mathbf{x}(k)-\mathbf{x}(k-1) \|^{2}|\mathcal{M}_{k}\right] + \frac{1+3\sigma^{2}}{1-\sigma^{2}}\mathbb{E}\left[\|\bm{c}_{3}\|^{2}|\mathcal{M}_{k}\right] \\
&\quad + \mathbb{E}\left[ \left\| \frac{\alpha_{k+1}}{n}\mathbf{1}\mathbf{1}^{T} \nabla F(\mathbf{z}(k))\right\|^{2}|\mathcal{M}_{k}\right] \\
&\quad + 2npL(4+np+2L\alpha_{k})\alpha_{k+1}\nu_{k}^{2},
\end{aligned}
\end{footnotesize}
\end{equation}
where the first inequality follows from the parts \textit{(b)} and \textit{(c)} of Lemma \ref{lemma:xi}. In addition,
\begin{equation}
\begin{small}
\begin{aligned}
\mathbb{E}\left[2\langle \bm{b}_{3},-\bm{c}_{3}\rangle|\mathcal{M}_{k}\right] \le \mathbb{E}\left[\|\bm{b}_{3}\|^{2}|\mathcal{M}_{k}\right] + \mathbb{E}\left[\|\bm{c}_{3}\|^{2}|\mathcal{M}_{k}\right].
\end{aligned}
\end{small}
\end{equation}

For the last three terms in (\ref{eq:x_x_k_initial}), we use Lemma \ref{lemma:xi} to obtain
\begin{equation}
\begin{small}
\begin{aligned}
\mathbb{E}\left[2\langle \bm{a}_{3},\bm{d}_{3}\rangle|\mathcal{M}_{k}\right]
&= 2\mathbb{E}\left[\left\langle \frac{\alpha_{k+1}}{\alpha_{k}}W\mathbf{x}(k),-\frac{\alpha_{k+1}}{\alpha_{k}}W\bm{\xi}(k)\right\rangle|\mathcal{M}_{k}\right] \\
&\le 4np(3+L\alpha_{k})\nu_{k}^{2},
\end{aligned}
\end{small}
\end{equation}

\begin{equation}
\begin{small}
\begin{aligned}
&\quad \mathbb{E}\left[2\langle \bm{b}_{3},\bm{d}_{3}\rangle|\mathcal{M}_{k}\right] \\
&= 2\mathbb{E}\left[\left\langle (\tilde{W}(k)-I)\mathbf{x}(k),-\frac{\alpha_{k+1}}{\alpha_{k}}W\bm{\xi}(k)\right\rangle|\mathcal{M}_{k}\right] \\
&\le 8np(3+L\alpha_{k})\nu_{k}^{2},
\end{aligned}
\end{small}
\end{equation}

\begin{equation}\label{ieq:third_cd}
\begin{small}
\begin{aligned}
&\quad \mathbb{E}\left[2\langle -\bm{c}_{3},\bm{d}_{3}\rangle|\mathcal{M}_{k}\right] \\
&= 2\mathbb{E}[\langle-\alpha_{k+1}\nabla F(\mathbf{z}(k+1)), \\
&\qquad \left((1+\alpha_{k+1}\beta)W - \alpha_{k+1}\beta I\right)\bm{\xi}(k+1)\rangle|\mathcal{M}_{k}] \\
&\quad + 2\mathbb{E}\left[\left\langle\alpha_{k+1}\nabla F(\mathbf{z}(k+1)), \frac{\alpha_{k+1}}{\alpha_{k}}W\bm{\xi}(k)\right\rangle|\mathcal{M}_{k}\right] \\
&\quad + 2\mathbb{E}\left[\left\langle\alpha_{k+1}\nabla F(\mathbf{z}(k)), -\frac{\alpha_{k+1}}{\alpha_{k}}W\bm{\xi}(k)\right\rangle|\mathcal{M}_{k}\right] \\
&\le 12npL\alpha_{k+1}\nu_{k+1}^{2} + 2npL(4+np+2L\alpha_{k})\alpha_{k+1}\nu_{k}^{2}.
\end{aligned}
\end{small}
\end{equation}

To derive the final result, we also need to bound $\mathbb{E}\left[ \left\|\frac{1}{n}\mathbf{1}^{T} \nabla F(\mathbf{z}(k)) \right\|^{2} |\mathcal{M}_{k}\right]$. From (\ref{equation_nabla_F}), we have
\begin{equation}\label{ieq:nabla_F_k}
\begin{footnotesize}
\begin{aligned}
&\quad \mathbb{E}\Big[\Big\|\frac{1}{n}\mathbf{1}^{T}\nabla F(\mathbf{z}(k))\Big\|^{2}|\mathcal{M}_{k}\Big] \\
&\le 2\mathbb{E}\Big[\Big\|\frac{1}{n}\mathbf{1}^{T}\nabla F(\mathbf{z}(k))-\frac{1}{n}\mathbf{1}^{T}\nabla F(\mathbf{z}(k+1))\Big\|^{2}|\mathcal{M}_{k}\Big] \\
&\quad + 2\mathbb{E}\Big[ \Big\|\frac{1}{n}\mathbf{1}^{T}\nabla F(\mathbf{z}(k+1))\Big\|^{2}|\mathcal{M}_{k}\Big] \\
&\le \frac{2L^{2}}{n}\mathbb{E}\left[\|\mathbf{z}(k+1) - \mathbf{z}(k)\|^{2}|\mathcal{M}_{k}\right] + 4L^{2}\mathbb{E}\left[\| \bar{\bm{x}}(k) - \bm{x}^{*} \|^{2}|\mathcal{M}_{k}\right]\\
&\quad + \frac{4L^{2}}{n} \mathbb{E}\left[ \| \mathbf{z}(k+1) - \mathbf{1}\bar{\bm{x}}(k) \|^{2}|\mathcal{M}_{k}\right] \\
&\le \frac{2L^{2}}{n}\mathbb{E}\left[\|\mathbf{x}(k)-\mathbf{x}(k-1)\|^{2}|\mathcal{M}_{k}\right] + 4L^{2}\mathbb{E}\left[\|\bar{\bm{x}}(k) - \bm{x}^{*}\|^{2}|\mathcal{M}_{k}\right] \\
&\quad + \frac{4L^{2}}{n}\mathbb{E}\left[\|\mathbf{x}(k) - \mathbf{1}\bar{\bm{x}}(k)\|^{2}|\mathcal{M}_{k}\right] + 12pL^{2}\nu_{k+1}^{2} \\
&\quad + 4pL^{2}(7+2L\alpha_{k})\nu_{k}^{2}, 
\end{aligned}
\end{footnotesize}
\end{equation}
where the third inequality follows from parts \textit{(f)} and \textit{(g)} of Lemma \ref{lemma:xi}.

Finally, by combining (\ref{eq:x_x_k_initial}) and (\ref{ieq:third_a})-(\ref{ieq:nabla_F_k}), we obtain
\begin{equation}\label{ieq:x_x_k_condition_e}
\begin{footnotesize}
\begin{aligned}
&\quad \mathbb{E}\left[\|\mathbf{x}(k+1) - \mathbf{x}(k)\|^{2}|\mathcal{M}_{k}\right] \\
&\le \frac{3+\sigma^{2}}{4} \mathbb{E}\left[\|\mathbf{x}(k) - \mathbf{x}(k-1)\|^{2}|\mathcal{M}_{k}\right] + 6np(3+2L\alpha_{k+1})\nu_{k+1}^{2} \\
&\quad + 4np(19+6L\alpha_{k}+5L\alpha_{k+1}+npL\alpha_{k+1}+2L^{2}\alpha_{k}\alpha_{k+1})\nu_{k}^{2} \\
&\quad + \frac{3+\sigma^{2}}{1-\sigma^{2}}\Big( \mathbb{E}\left[\| \bm{b}_{3} \|^{2}|\mathcal{M}_{k}\right] + \mathbb{E}\left[ \| \bm{c}_{3} \|^{2} |\mathcal{M}_{k}\right] \\
&\qquad + n\alpha_{k+1}^{2}\mathbb{E}\Big[ \Big\|\frac{1}{n}\mathbf{1}^{T} \nabla F(\mathbf{z}(k)) \Big\|^{2} |\mathcal{M}_{k}\Big] \Big) \\
&\le \Big( \frac{3+\sigma^{2}}{4} + \frac{3(3+\sigma^{2})L^{2}\alpha_{k+1}^{2}}{1-\sigma^{2}} \Big) \mathbb{E}\left[\|\mathbf{x}(k) - \mathbf{x}(k-1)\|^{2}|\mathcal{M}_{k}\right] \\
&\quad + \frac{4n(3+\sigma^{2})L^{2}\alpha_{k+1}^{2}}{1-\sigma^{2}} \mathbb{E}\left[\|\bar{\bm{x}}(k) - \bm{x}^{*}\|^{2}|\mathcal{M}_{k}\right] \\
&\quad + \frac{(3+\sigma^{2})(\|W-I\|^{2} + 4L^{2}\alpha_{k+1}^{2})}{1-\sigma^{2}} \mathbb{E}\left[\|\mathbf{x}(k) - \mathbf{1}\bar{\bm{x}}(k)\|^{2}|\mathcal{M}_{k}\right] \\
&\quad + 2np\Big(9+6L\alpha_{k+1}+\frac{28L^{2}\alpha_{k+1}^{2}}{1-\sigma^{2}}\Big)\nu_{k+1}^{2} \\
&\quad + 4np\Big(19+6L\alpha_{k}+5L\alpha_{k+1}+npL\alpha_{k+1}+2L^{2}\alpha_{k}\alpha_{k+1}\\
&\qquad+\frac{6L^{2}(7+2L\alpha_{k})\alpha_{k+1}^{2}}{1-\sigma^{2}}\Big)\nu_{k}^{2}.
\end{aligned}
\end{footnotesize}
\end{equation}
Taking the full expectation on both sides of the above inequality completes the proof.
\end{proof}

\section*{References}
\bibliographystyle{IEEEtran}
\normalem
\bibliography{IEEEabrv,2401_bibfile}
\begin{IEEEbiography}[{\includegraphics[width=1in,height=1.25in,clip,keepaspectratio]{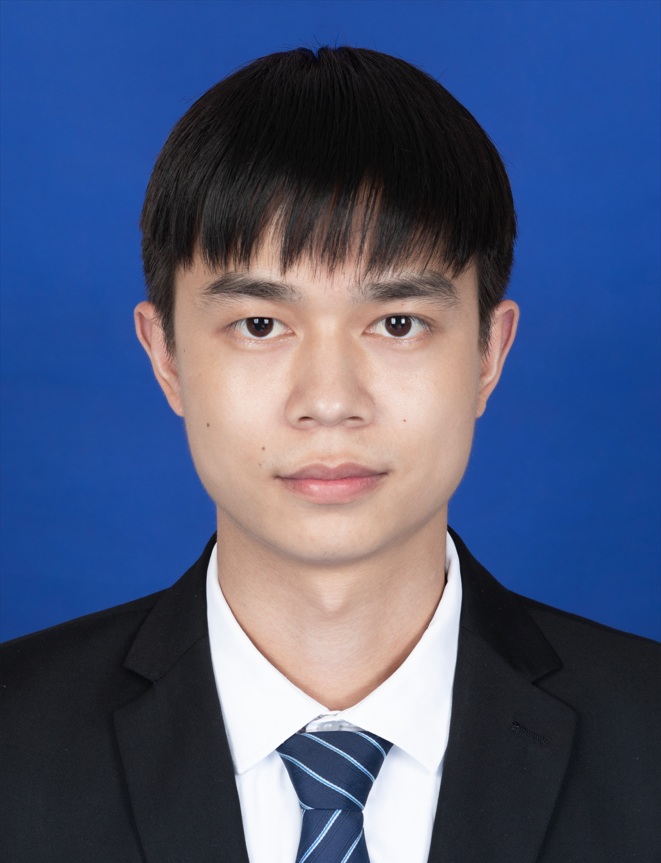}}]{Furan Xie} received the B.Eng. degree in automation from Wuhan University of Science and Technology, Wuhan, China, in 2021, where he is currently working toward the Ph.D. degree in control science and engineering with the School of Artificial Intelligence and Automation. His research interests include distributed optimization, privacy preservation, and power systems.
\end{IEEEbiography}
\begin{IEEEbiography}[{\includegraphics[width=1in,height=1.25in,clip,keepaspectratio]{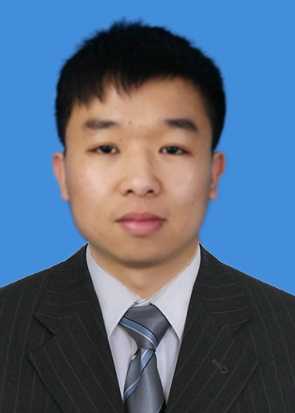}}]{Bing Liu} received the Ph.D degree in Control Science and Engineering from Huazhong University of Science and Technology, Wuhan, China, in 2014. From September 2014 to December 2016, he was a Postdoctoral Fellow in the Department of Electrical and Computer Engineering, University of Windsor, Windsor, ON, Canada. 

He is currently an Associate Professor with the School of Artificial Intelligence and Automation, Wuhan University of Science and Technology, Wuhan. His research interests include distributed optimization and power systems.
\end{IEEEbiography}
\begin{IEEEbiography}[{\includegraphics[width=1in,height=1.25in,clip,keepaspectratio]{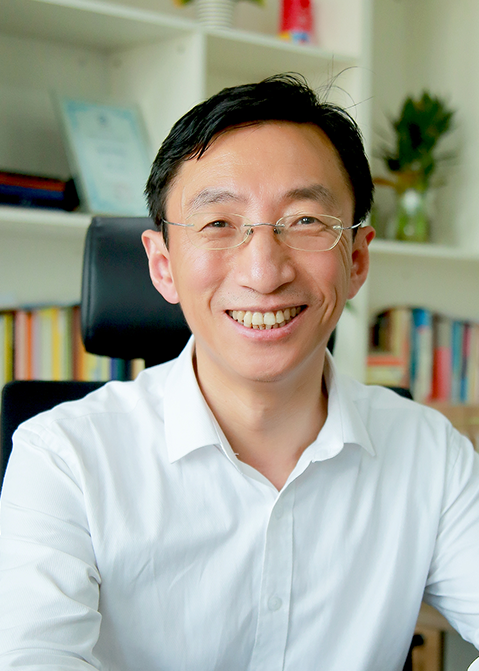}}]{Li Chai}(S’00-M’03) received the B.S. degree in applied mathematics and the M.S. degree in control science and engineering from Zhejiang University, China, in 1994 and 1997, respectively, and the Ph.D degree in electrical engineering from the Hong Kong University of Science and Technology, Hong Kong, in 2002. From August 2002 to December 2007, he was at Hangzhou Dianzi University, China. He worked as a professor at Wuhan University of Science and Technology, China, from 2008 to 2022. In August 2022, he joined Zhejiang University, China, where he is currently a professor at the College of Control Science and Engineering. He has been a postdoctoral researcher or visiting scholar at Monash University, Newcastle University, Australia and Harvard University, USA. 

His research interests include distributed optimization, filter banks, graph signal processing, and networked control systems. Professor Chai is the recipient of the Distinguished Young Scholar of the National Science Foundation of China. He has published over 100 fully refereed papers in prestigious journals and leading conferences. He serves as the Associate Editor of IEEE Transactions on Circuit and Systems II: Express Briefs, Control and Decision and Journal of Image and Graphs.
\end{IEEEbiography}
\end{document}